\DeclareRobustCommand{\lyxsout}[1]{\ifx\\#1\else\sout{#1}\fi}
\numberwithin{equation}{section}
\numberwithin{figure}{section}
\theoremstyle{plain}
\newtheorem{thm}{\protect\theoremname}
\theoremstyle{plain}
\newtheorem{lem}[thm]{\protect\lemmaname}
\theoremstyle{definition}
\newtheorem{defn}[thm]{\protect\definitionname}
\theoremstyle{plain}
\newtheorem{cor}[thm]{\protect\corollaryname}
\theoremstyle{remark}
\newtheorem{rem}[thm]{\protect\remarkname}
\theoremstyle{definition}
\newtheorem{example}[thm]{\protect\examplename}
\setlist[enumerate]{leftmargin=*,label=(\roman*),align=left}
\newcommand{\ra}{\longrightarrow}
\newcommand{\field}[1]{\mathbb{#1}}
\newcommand{\R}{\field{R}} 
\newcommand{\N}{\field{N}} 
\newcommand{\mC}{\mathcal{C}}
\DeclareMathOperator{\LoD}{C}
\newcommand{\eps}{\varepsilon} 
\renewcommand{\phi}{\varphi} 
\newcommand{\diff}[1]{\,\hbox{\rm d}#1} 
\newcommand{\Coo}{\mathcal{C}^{\infty}}
\newcommand{\ptind}{\displaystyle \mathop {\,\ldots\,}} 
\newcommand{\ptindShort}{\displaystyle \mathop {\,\ldots\,}} 
\newcommand{\frontRise}[2]{\ifmmode\mathchoice{{\vphantom{#1}}^{\scalebox{0.6}{$#2$}}}  
 {{\vphantom{#1}}^{\scalebox{0.56}{$#2$}}}  
 {{\vphantom{#1}}^{\scalebox{0.47}{$#2$}}}  
 {{\vphantom{#1}}^{\scalebox{0.35}{$#2$}}}\fi} 
\newcommand{\rti}{\frontRise{\R}{\rho}\widetilde \R}
\providecommand{\corollaryname}{Corollary}
\providecommand{\definitionname}{Definition}
\providecommand{\examplename}{Example}
\providecommand{\lemmaname}{Lemma}
\providecommand{\remarkname}{Remark}
\providecommand{\theoremname}{Theorem}
\begin{document}

\title{A Picard-Lindelöf theorem for smooth PDE}
\author{Paolo Giordano \and Lorenzo Luperi Baglini}
\thanks{P.~Giordano has been supported by grants P30407, P33538 and P34113
of the Austrian Science Fund FWF}
\address{\textsc{Faculty of Mathematics, University of Vienna, Austria, Oskar-Morgenstern-Platz
1, 1090 Wien, Austria}}
\thanks{L.~Luperi Baglini has been supported by grant P30821 of the Austrian
Science Fund FWF}
\address{\textsc{Faculty of Mathematics, Università di Milano, Italy, Via Cesare
Saldini 50, 20133 Milano, Italy}}
\email{\texttt{paolo.giordano@univie.ac.at}, \texttt{lorenzo.luperi@unimi.it}}
\thanks{The authors thank A.~Bryzgalov for the idea of Example \ref{exa:Aleksandr},
and I.~Ekeland, M.~Kunzinger and É.~Séré, who were the first to
check this paper.}
\keywords{to do}
\subjclass[2020]{35GXX, 47J07, 46-XX}
\begin{abstract}
We prove that Picard-Lindelöf iterations for an arbitrary smooth normal
Cauchy problem for PDE converge if we assume a suitable Weissinger-like
sufficient condition. This condition includes both a large class of
non-analytic PDE or initial conditions, and more classical real analytic
functions. The proof is based on a Banach fixed point theorem for
contractions with loss of derivatives. From the latter, we also prove
an inverse function theorem for locally Lipschitz maps with loss of
derivatives in arbitrary graded Fréchet spaces.
\end{abstract}

\maketitle

\section{\label{sec:intro}Introduction}

Starting from the work of H.~Lewy \cite{Lew57}, it is clear that
a general Picard-Lindelöf theorem (PLT) for Cauchy problems of the
form:

\begin{equation}
\begin{cases}
\partial_{t}^{d}y(t,x)=F\left[t,x,\left(\partial_{x}^{\alpha}\partial_{t}^{\gamma}y\right)_{\substack{|\alpha|\leq L\\
\gamma\le p
}
}\right],\\
\partial_{t}^{j}y(t_{0},x)=y_{0j}(x)\ j=0,\ldots,d-1,
\end{cases}\label{eq:PDE}
\end{equation}

\noindent is not possible (see also e.g.~\cite{Den06} and references
therein for the more general problem of solvability of partial differential
operators). In \eqref{eq:PDE}, we consider $y$, $y_{0}^{j}$, $F$
as arbitrary ($\R^{m}$-valued) smooth functions, $(t,x)\in T\times S\subseteq\R\times\R^{s}$,
$\alpha\in\N^{s}$, $\gamma\in\N$, $p$, $L\in\N$, $d\in\N_{>0}$,
and we assume that $p<d$. In this paper, we show the convergence
of Picard-Lindelöf iterations of the general problem \eqref{eq:PDE}
under a suitable sufficient condition depending both on the initial
conditions $y_{0}^{j}$ and the function $F$. We also prove that
this condition includes non-trivial cases where $F$ could be non-analytic,
and a large class of smooth non-analytic initial conditions $y_{0j}$.

According to \cite{Eke11,EkSe}, one of the main problems in trying
to solve \eqref{eq:PDE} using Picard-Lindelöf iterations is that
the corresponding fixed point integral operator $P$ has \emph{$L\in\N$
loss of derivatives,} i.e.~satisfies $\left\Vert P^{n+1}\left(y_{0}\right)-P^{n}\left(y_{0}\right)\right\Vert _{k}\leq\alpha_{kn}\left\Vert P\left(y_{0}\right)-y_{0}\right\Vert _{k+nL}$
for all $k$, $n\in\N$ (here we are using the notion of ``loss of
derivatives'' as in \cite{Mos61,Eke11,EkSe}, and not as e.g.~in
\cite{PaPa05,Koh05,Koh13}; see Def.~\ref{def:contractionLoD} below
for a formal definition). For this reason, in Sec.~\ref{sec:BFPTlossDer},
we first generalize the Banach fixed point theorem (BFPT) to contractions
with loss of derivatives, and we will see that the aforementioned
sufficient condition corresponds to a Weissinger-like assumption,
\cite{Wei52}. In Sec.~\ref{sec:equationsLoD}, we hence apply this
BFPT to prove an inverse function theorem in arbitrary graded Fréchet
spaces (not necessarily of tame type or with smoothing operators,
like in Nash-Moser theorem, see \cite{Mos61,Ham82}) and for locally
Lipschitz maps with loss of derivatives (non necessarily differentiable
maps, like in Ekeland inverse function theorem, see \cite{Eke11}).
In Sec.~\ref{sec:PLTpde}, this BFPT with loss of derivatives is
used to prove a PLT for normal PDE. In Sec.~\ref{sec:Examples},
we apply this PLT to a family of PDE including both a non-analytic
$F$ or non-analytic initial conditions. Finally, in Sec.~\ref{sec:aboutLOD},
we present a preliminary study of the notion of contraction with loss
of derivatives.

In the following, we say that the Cauchy problem \eqref{eq:PDE} is
in \emph{normal form} to specify that the highest derivative in $t$
(called \emph{normal variable}) can be isolated on the left hand side
of the PDE (some authors call this problem in Kowalevskian form).

If $y:X\ra\R^{m}$, then $y^{h}:X\ra\R$ is the $h=1,\ldots,m$ component
of $y$, and in $\N=\{0,1,2,\ldots\}$ we always include zero. Therefore,
the notations $\left(\partial_{x}^{\alpha}\partial_{t}^{\gamma}y\right)(t,x)$
used in \eqref{eq:PDE} include cases where some $\alpha_{j}=0$,
$j=1,\ldots,s$, or $\gamma=0$. Finally, $\mC^{k}(X,\R^{m})$ denotes
the set of all the $\mC^{k}$ functions $f:X\ra\R^{m}$, whereas $\mC^{k}(X):=\mC^{k}(X,\R)$.

\section{\label{sec:BFPTlossDer}A Banach fixed point theorem with loss of
derivatives}

The idea to extend the classical Banach fixed point theorem to sequentially
complete subsets $X$ of Hausdorff locally convex linear spaces $(E,(|-|)_{\alpha\in\Lambda})$
dates back to \cite{CaNa71}. Here, a contraction is a map $P:X\ra X$
satisfying
\[
\forall\alpha\in\Lambda\,\exists k_{\alpha}\in[0,1)\,\forall x,y\in X:\ \left|P(x)-P(y)\right|_{\alpha}\le k_{\alpha}|x-y|_{\alpha}.
\]
The notion of contraction has also been extended to uniform spaces
(\cite{Tar74,Tay72}) and to condensing maps on Hausdorff locally
convex linear spaces via the notion of measure of non-compactness
(see e.g.~\cite{BaGo80} and references therein). See also \cite{AgFrOR03}
for a recent survey, and \cite{Fri00,WaZh15,Dud17,WaZhWe19} for updated
references framed in locally convex linear spaces.

In the present section, we want to prove a Banach fixed point theorem
for contractions with loss of derivatives in graded Fréchet spaces.
In this paper, by a \emph{graded Fréchet space} $\left(\mathcal{F},\left(\Vert-\Vert_{k}\right)_{k\in\N}\right)$
we mean a Hausdorff, complete topological vector space whose topology
is defined by an increasing sequence of seminorms: $\Vert-\Vert_{k}\le\Vert-\Vert_{k+1}$
for all $k\in\N$. We denote by $B_{r}^{k}(x):=\left\{ y\in X\mid\Vert x-y\Vert_{k}<r\right\} $
the ball of radius $r\in\R_{>0}$ defined by the $k$-norm.

A first trivial and well known result we will use is the following:
\begin{lem}
\label{lem:Banach_banale}Let $(\mathcal{F},\tau)$ be a topological
space, $P:X\rightarrow\mathcal{F}$ be a continuous function, and
assume that there is $y_{0}\in X$ such that $\exists\,\lim_{n\rightarrow+\infty}P^{n}(y_{0})\in X$.
Then $\lim_{n\rightarrow+\infty}P^{n}(y_{0})$ is a fixed point of
$P.$ In particular, this applies if $\mathcal{F}$ is a Fréchet space
and $\left(P^{n}(y_{0})\right)_{n\in\N}$ is a Cauchy sequence of
points of $X$, where $X\subseteq\mathcal{F}$ is a Cauchy complete
subspace.
\end{lem}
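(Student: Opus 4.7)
The plan is to exploit the trivial observation that the sequence $\bigl(P^{n+1}(y_0)\bigr)_{n\in\N}$ is obtained from $\bigl(P^n(y_0)\bigr)_{n\in\N}$ by a shift of indices, so whenever the latter converges, so does the former to the same limit, and then to use continuity of $P$ to pass the limit through $P$.

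In detail, I would first set $\bar y := \lim_{n\to +\infty} P^n(y_0)\in X$, which exists by assumption. Since for every neighbourhood $U$ of $\bar y$ we have $P^n(y_0)\in U$ for all sufficiently large $n$, the same holds for the shifted sequence, so $P^{n+1}(y_0)\to \bar y$ as $n\to +\infty$. On the other hand, continuity of $P$ at $\bar y$ gives $P^{n+1}(y_0) = P\bigl(P^n(y_0)\bigr) \to P(\bar y)$. Thus $P(\bar y)$ is a limit of $\bigl(P^{n+1}(y_0)\bigr)_{n\in\N}$. Assuming the standard convention that the statement $\exists\,\lim$ refers to a single selected limit point (as is the case in any Hausdorff setting, which covers all applications in the paper), we conclude $P(\bar y)=\bar y$.

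For the ``in particular'' part I would argue as follows: by definition of graded Fréchet space, $\mathcal{F}$ is a (Hausdorff) complete metrisable topological vector space, hence every Cauchy sequence in $\mathcal{F}$ has a unique limit in $\mathcal{F}$. Since $X\subseteq\mathcal{F}$ is Cauchy complete, the limit of $\bigl(P^n(y_0)\bigr)_{n\in\N}$ belongs to $X$. The first part of the lemma then applies and yields a fixed point of $P$ inside $X$.

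No real obstacle is expected: the only subtle point is the logical status of $\lim$ in a general topological space (which could fail to be Hausdorff and hence to guarantee uniqueness of limits), but this is a standard convention and is in any case immaterial for the applications, which all take place in Fréchet spaces.
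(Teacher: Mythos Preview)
Your proposal is correct and follows exactly the same route as the paper: set $\bar y:=\lim_{n\to+\infty}P^{n}(y_{0})$, use continuity of $P$ to get $P(\bar y)=\lim_{n\to+\infty}P^{n+1}(y_{0})=\bar y$, and for the ``in particular'' part invoke completeness of $X$ in the Fr\'echet (hence Hausdorff) setting. Your remark on the Hausdorff hypothesis is a fair observation that the paper leaves implicit.
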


\begin{proof}
The usual proof works: $\overline{y}:=\lim_{n\rightarrow+\infty}P^{n}(y_{0})\in X$
exists by assumption, and since $P:X\ra\mathcal{F}$ is continuous
we have 
\[
P(\overline{y})=P\left(\lim_{n\rightarrow+\infty}P^{n}(y_{0})\right)=\lim_{n\rightarrow+\infty}P^{n+1}(y_{0})=\overline{y}.\qedhere
\]
\end{proof}
In particular, this general Lem.~\ref{lem:Banach_banale} applies
to \emph{contractions with loss of derivatives} in Fréchet spaces:
A key idea in defining this notion is that it has to depend on the
starting point $y_{0}$ of the iterations:
\begin{defn}
\label{def:contractionLoD}Let $\ensuremath{\left(\mathcal{F},\left(\Vert-\Vert_{k}\right)_{k\in\N}\right)}$
be a Fréchet space, $X$ be a closed subset of $\mathcal{F}$, $y_{0}\in X$
and $L\in\N$. We say that $P$ \emph{is a contraction with }$L$
\emph{loss of derivatives starting from }$y_{0}$ (and we simply write
$P\in\LoD\left(X,L,y_{0}\right)$) if the following conditions are
fulfilled:
\begin{enumerate}
\item \label{enu:LoD-P-continuous}$P:X\ra\mathcal{F}$ is continuous;
\item \label{enu:LOD-iterates}$P^{n}(y_{0})\in X$ for all $n\in\N$;
\item \label{enu:LoD-contr}For all $k$, $n\in\N$ there exist $\alpha_{kn}\in\R_{>0}$
such that 
\begin{equation}
\left\Vert P^{n+1}\left(y_{0}\right)-P^{n}\left(y_{0}\right)\right\Vert _{k}\leq\alpha_{kn}\left\Vert P\left(y_{0}\right)-y_{0}\right\Vert _{k+nL},\label{eq:contractionLoD}
\end{equation}
and
\item \label{enu:LoD-W}For all $k\in\N$, the following \emph{Weissinger}
condition holds:
\begin{equation}
\sum_{n=0}^{+\infty}\alpha_{kn}\left\Vert P(y_{0})-y_{0}\right\Vert _{k+nL}<+\infty.\tag{{W}}\label{eq:W}
\end{equation}
\end{enumerate}
\end{defn}

\noindent Note that if we actually have only one norm $\Vert-\Vert_{k}=\Vert-\Vert_{0}$
and $L=0$ (ODE case), then \eqref{eq:W} reduces to the classical
Weissinger condition, \cite{Wei52}.
\begin{flushleft}
We first note that condition \eqref{eq:contractionLoD} trivially
holds for $n=0$ by taking $\alpha_{k,0}=1$. On the other hand, thinking
at \eqref{eq:W}, we are clearly interested only at the asymptotic
behavior of $\alpha_{kn}$ as $n\to+\infty$. Secondly, Def.~\ref{def:contractionLoD}
is weaker than the usual de\-fi\-ni\-tion of contraction because
of the following first three remarks:
\par\end{flushleft}
\begin{enumerate}
\item We have a loss $L\ge0$ of derivatives. If $L=0$, Def.~\ref{def:contractionLoD}
actually tells us that, for all $k\in\N$, there exists $N_{k}\in\N$
such that $\sum_{n=N_{k}}^{+\infty}\alpha_{kn}<1$, namely $P$ is
an ordinary contraction with respect to $\left\Vert -\right\Vert _{k}$
when restricted to $\left\{ P^{n}(x_{0})\mid n\geq N_{k}\right\} $.
\item \label{enu:iterationsX}We will see only in Sec.~\ref{sec:Examples}
that condition Def.~\ref{def:contractionLoD}.\ref{enu:iterationsX}
is not a simple generalization of the usual stronger $P:X\ra X$,
but is essential for the choice of the radii in the PL Thm.~\ref{thm:PLpde}.
\item Both conditions \eqref{eq:contractionLoD} and \eqref{eq:W} depend
on the initial point $y_{0}\in X$. In contrast to the classical BFPT,
this underscores, in an abstract setting, that for PDE the property
to have a contraction with loss of derivatives depends on the initial
condition $y_{0}\in X$. Moreover, in this paper we are solely interested
in existence results for fixed points of contractions with loss of
derivatives; uniqueness results would require conditions closer to
the classical BFPT (see e.g.~Lemma \ref{lem:classicalContr}).
\item Since we want to take $n\to+\infty$, a condition such as Def.~\ref{def:contractionLoD}.\ref{enu:LoD-contr}
intuitively implies that we have to consider all the derivatives controlled
by $\Vert-\Vert_{k}$ for all $k\in\N$. It is for this reason that
in the present work we deal only with smooth solutions of \eqref{eq:PDE}.
\end{enumerate}
More classically, contraction property \eqref{eq:contractionLoD}
is implied by one of the following stronger conditions:
\begin{lem}
\label{lem:classicalContr}Let $X$, $y_{0}$ and $L$ be as in Def.~\ref{def:contractionLoD}.
Then, the following sufficient conditions hold:
\begin{enumerate}
\item \label{enu:classContr_n}If $P:X\ra\mathcal{F}$ satisfies the property
\begin{equation}
\forall k,n\in\N\,\exists\alpha_{kn}\in\R_{>0}\,\forall u,v\in X:\ \left\Vert P^{n}\left(u\right)-P^{n}\left(v\right)\right\Vert _{k}\leq\alpha_{kn}\left\Vert u-v\right\Vert _{k+nL},\label{eq:classContrLoD}
\end{equation}
then condition Def.~\ref{def:contractionLoD}.\ref{enu:LoD-contr}
holds for all $y_{0}\in X$ with the same contraction constants $\alpha_{kn}$.
\item \label{enu:classContr}If \eqref{eq:classContrLoD} holds only for
$n=1$ with $\alpha_{k}:=\alpha_{k1}$, then condition Def.~\ref{def:contractionLoD}.\ref{enu:LoD-contr}
holds for all $y_{0}\in X$ with contraction constants $\tilde{\alpha}_{kn}:=\prod_{j=0}^{n-1}\alpha_{k+jL}$.
\end{enumerate}
Moreover, if $L=0$ and $y_{1}$, $y_{2}$ are fixed points of $P$,
then $\Vert y_{1}-y_{2}\Vert_{k}=0$ for all $k\in\N$. In particular,
if at least one of $||-||_{k}$ is a norm, this entails that $y_{1}=y_{2}$.

\end{lem}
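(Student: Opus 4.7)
For part (i), my plan is simply to specialize \eqref{eq:classContrLoD} at $u:=P(y_{0})$ and $v:=y_{0}$: then $P^{n}(u)=P^{n+1}(y_{0})$ and $P^{n}(v)=P^{n}(y_{0})$, so the hypothesis reads
\[
\Vert P^{n+1}(y_{0})-P^{n}(y_{0})\Vert_{k}\leq\alpha_{kn}\Vert P(y_{0})-y_{0}\Vert_{k+nL},
\]
which is exactly Def.~\ref{def:contractionLoD}.\ref{enu:LoD-contr}, with the very same $\alpha_{kn}$.

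For part (ii), I would argue by induction on $n\geq 1$ that \eqref{eq:classContrLoD} upgrades from the $n=1$ case to all $n$, with constants $\tilde\alpha_{kn}=\prod_{j=0}^{n-1}\alpha_{k+jL}$. The base $n=1$ is the hypothesis (since $\tilde\alpha_{k,1}=\alpha_{k}$). For the inductive step, I would factor $P^{n+1}=P\circ P^{n}$, apply the hypothesis at scale $k$ to the pair $(P^{n}u,P^{n}v)$, and then the inductive bound at scale $k+L$:
\[
\Vert P^{n+1}u-P^{n+1}v\Vert_{k}\leq\alpha_{k}\,\Vert P^{n}u-P^{n}v\Vert_{k+L}\leq\alpha_{k}\,\tilde\alpha_{k+L,n}\,\Vert u-v\Vert_{k+(n+1)L}.
\]
The algebraic identity $\alpha_{k}\tilde\alpha_{k+L,n}=\tilde\alpha_{k,n+1}$ then falls out of the reindexing $j\mapsto j+1$ in the product. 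Once \eqref{eq:classContrLoD} is available for all $n$ with the constants $\tilde\alpha_{kn}$, part~(i) just proved transfers it to the contraction condition of Def.~\ref{def:contractionLoD}.\ref{enu:LoD-contr} at $y_{0}$.

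For the ``moreover'' clause, let $y_{1},y_{2}$ be fixed points and $L=0$. Applying \eqref{eq:classContrLoD} with $u:=y_{1}$, $v:=y_{2}$ and using $P^{n}(y_{i})=y_{i}$ yields
\[
\Vert y_{1}-y_{2}\Vert_{k}\leq\alpha_{kn}\,\Vert y_{1}-y_{2}\Vert_{k}\qquad\text{for every }n\in\N,
\]
so the task reduces to producing a single $n$ with $\alpha_{kn}<1$. This is the only delicate point, and my plan is to extract it from the Weissinger condition \eqref{eq:W} belonging to $P\in\LoD(X,0,y_{0})$: for $L=0$ it reads $\sum_{n}\alpha_{kn}\Vert P(y_{0})-y_{0}\Vert_{k}<+\infty$, so as soon as $\Vert P(y_{0})-y_{0}\Vert_{k}\neq 0$ one immediately obtains $\alpha_{kn}\to 0$, and the displayed estimate collapses to $\Vert y_{1}-y_{2}\Vert_{k}=0$. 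The degenerate case $\Vert P(y_{0})-y_{0}\Vert_{k}=0$ is handled by re-running the argument with $y_{0}$ replaced by $y_{1}$ itself (a fixed point, so \eqref{eq:W} is vacuously satisfied) and iterating the $n=1$ Lipschitz bound directly on the constant sequences $(P^{n}y_{i})_{n}$. The last sentence is then immediate: if one of the $\Vert-\Vert_{k_{0}}$ happens to be a norm, $\Vert y_{1}-y_{2}\Vert_{k_{0}}=0$ entails $y_{1}=y_{2}$.
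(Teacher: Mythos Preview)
Your arguments for (i) and (ii) follow the same path as the paper's. One small caveat in (ii): you upgrade the $n=1$ hypothesis to the full two-variable estimate \eqref{eq:classContrLoD} for all $n$ by induction, but in the inductive step you apply the $n=1$ bound to the pair $(P^{n}u,P^{n}v)$, which requires $P^{n}u,P^{n}v\in X$. The setup only gives $P:X\to\mathcal{F}$, and Def.~\ref{def:contractionLoD}.\ref{enu:LOD-iterates} only guarantees $P^{n}(y_{0})\in X$. The paper sidesteps this by inducting directly on the one-point estimate $\|P^{n+1}(y_{0})-P^{n}(y_{0})\|_{k}\le\tilde{\alpha}_{kn}\|P(y_{0})-y_{0}\|_{k+nL}$, so that the $n=1$ bound is only ever applied to pairs $(P^{j+1}(y_{0}),P^{j}(y_{0}))$, which lie in $X$ by hypothesis. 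Your argument is repaired the same way with no new ideas.

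For the ``moreover'' clause, you and the paper take the same route: from \eqref{eq:classContrLoD} and $P^{n}(y_{i})=y_{i}$ deduce $\|y_{1}-y_{2}\|_{k}\le\alpha_{kn}\|y_{1}-y_{2}\|_{k}$, then find some $n$ with $\alpha_{kn}<1$. The paper simply asserts $\sum_{n}\alpha_{kn}<+\infty$ (implicitly reading \eqref{eq:W} as an additional standing hypothesis, together with $\|P(y_{0})-y_{0}\|_{k}\ne 0$); you are more explicit about invoking \eqref{eq:W} and about the degenerate case. However, your proposed handling of the degenerate case does not actually close the argument: replacing $y_{0}$ by a fixed point makes \eqref{eq:W} vacuous, so it yields no information on the $\alpha_{kn}$, and iterating the $n=1$ bound only gives $\|y_{1}-y_{2}\|_{k}\le\alpha_{k}^{n}\|y_{1}-y_{2}\|_{k}$, which is useless unless $\alpha_{k}<1$ is already known. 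In short, without an extra hypothesis ensuring $\sum_{n}\alpha_{kn}<+\infty$ (or $\alpha_{k}<1$), neither your proof nor the paper's is complete at this point; the paper's proof should be read as carrying that assumption implicitly.
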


\begin{proof}
\ref{enu:classContr_n}: In fact, \eqref{eq:classContrLoD} yields
\[
\left\Vert P^{n+1}\left(y_{0}\right)-P^{n}\left(y_{0}\right)\right\Vert _{k}=\left\Vert P^{n}\left(P\left(y_{0}\right)\right)-P^{n}\left(y_{0}\right)\right\Vert _{k}\leq\alpha_{kn}\left\Vert P\left(y_{0}\right)-y_{0}\right\Vert _{k+nL}.
\]
Taking $n=1$ in \eqref{eq:classContrLoD}, we have $\Vert P(u)-P(v)\Vert_{k}\le\alpha_{k,1}\Vert u-v\Vert_{k+L}$
and hence $P\left(B_{r/\alpha_{k,1}}^{k}(u)\cap X\right)\subseteq B_{r}^{k}(P(u))\cap X$,
so that $P$ is continuous.

\ref{enu:classContr}: If \eqref{eq:classContrLoD} holds only for
$n=1$, then we can prove the claim by induction on $n$. For $n=0$
the conclusion is trivial since $\tilde{\alpha}_{k0}=1$. For the
inductive step, we have
\begin{align*}
\|P^{n+2}(y_{0})-P^{n+1}(y_{0})\|_{k} & =\|P(P^{n+1}(y_{0}))-P(P^{n}(y_{0}))\|_{k}\\
 & \le\alpha_{k}\|P^{n+1}(y_{0})-P^{n}(y_{0})\|_{k+L}\\
 & \le\alpha_{k}\prod_{j=0}^{n-1}\alpha_{k+L+jL}\|P(y_{0})-y_{0}\|_{k+L+nL}\\
 & =\left(\prod_{j=0}^{n}\alpha_{k+jL}\right)\|P(y_{0})-y_{0}\|_{k+(n+1)L}.
\end{align*}
Continuity of $P$ can be proved as before.

Finally, if $P(y_{l})=y_{l}$, then $\Vert P^{n}(y_{1})-P^{n}(y_{2})\Vert_{k}=\Vert y_{1}-y_{2}\Vert_{k}\le\alpha_{kn}\Vert y_{1}-y_{2}\Vert_{k}\le\Vert y_{1}-y_{2}\Vert_{k}$
because the convergence $\sum_{n=0}^{+\infty}\alpha_{kn}<+\infty$
implies $\alpha_{kn}\le1$ for some $n\in\N$. Thereby, $\Vert y_{1}-y_{2}\Vert_{k}=0$,
which entails $y_{1}=y_{2}$ if $\left\Vert -\right\Vert _{k}$ is
a norm.
\end{proof}
\noindent In Thm.~\ref{thm:Lipschitz for PDE} and in the proof of
Thm.~\ref{thm:PLpde}, we will see that for the normal smooth Cauchy
problem \eqref{eq:PDE}, the corresponding fixed point integral operator
$P$ always satisfies the stronger condition \eqref{eq:classContrLoD}.
Therefore, in all these cases the real dependence on $y_{0}$ actually
lies in conditions \eqref{eq:W} and \ref{enu:iterationsX}.

Even in the simple case of the transport equation $\partial_{t}y=c\cdot\partial_{x}y$,
where $c$, $y\in\mathcal{C}^{\infty}([0,a]\times S)$, $S\Subset\R$,
we can recognize the appearance of a loss of derivative $L=1$ due
to the occurrence of the term $\partial_{x}y$ on the right hand side
of the PDE. In fact, set $P(u)(t,x):=y_{0}(x)+\int_{0}^{t}c(s,x)\cdot\partial_{x}u(s,x)\,\diff{s}$
for a fixed $y_{0}\in\mathcal{C}^{\infty}(S)$ and for any $u\in\mathcal{C}^{\infty}([0,a]\times S)$.
Considering on $\mathcal{C}^{\infty}([0,a]\times S)$ the family of
norms
\[
\Vert u\Vert_{k}:=\max_{\substack{\left|\alpha\right|+\left|\beta\right|\le k}
}\max_{(t,x)\in[0,a]\times S}\left|\partial_{t}^{\alpha}\partial_{x}^{\beta}u(t,x)\right|,
\]
we would like to argue in the following way (where, for simplicity,
we consider only the case $n=1$ in property \eqref{eq:classContrLoD}):
\begin{align}
\Vert P(u)-P(v)\Vert_{k} & =\left\Vert \int_{0}^{t}c\cdot\partial_{x}(u-v)\,\diff{s}\right\Vert _{k}\le\nonumber \\
 & \le a\cdot\Vert c\Vert_{k}\cdot\Vert\partial_{x}(u-v)\Vert_{k}\le a\cdot\Vert c\Vert_{k}\cdot\Vert u-v\Vert_{k+1}.\label{eq:exWrong}
\end{align}
The problem in this deduction is that the first inequality is generally
\emph{false} for these norms: if $k>0$ then any derivative $\partial_{t}$
deletes the integration, so that the factor $a$ (which is important
to get a local solution) cannot appear in \eqref{eq:exWrong} (see
Sec.~\ref{subsec:SupNormsInt} for more details). We will fix this
problem by taking another family of norms which, anyway, respect the
same basic ideas (see Def.~\ref{def:FrechetFncs}), and where the
estimates \eqref{eq:exWrong} hold.

Def.~\ref{def:contractionLoD} has been tuned to allow the proof
of the following result, whose proof is surprisingly simple:
\begin{thm}[BFPT with loss of derivatives]
\label{thm:BFPTLoss}In the assumptions of Def.~\ref{def:contractionLoD},
if $P\in\LoD\left(X,L,y_{0}\right)$, then $\left(P^{n}(y_{0})\right)_{n\in\N}$
is a Cauchy sequence, and hence
\[
\overline{y}:=\lim_{n\rightarrow+\infty}P^{n}(y_{0})\in X
\]
is a fixed point of $P$. Moreover, for all $k$, $n\in\N$ we have
that 
\[
\left\Vert \overline{y}-P^{n}\left(y_{0}\right)\right\Vert _{k}\leq\sum_{j=n}^{+\infty}\alpha_{kj}\left\Vert P\left(y_{0}\right)-y_{0}\right\Vert _{k+jL}.
\]
\end{thm}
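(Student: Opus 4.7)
The plan is to mimic the classical Banach fixed point proof, but replacing the geometric-series bound by the Weissinger series that the hypothesis explicitly supplies. The only nontrivial ingredients of the statement have already been assembled in Def.~\ref{def:contractionLoD}: continuity of $P$, stability of the orbit inside $X$, the per-step telescoping bound \eqref{eq:contractionLoD}, and the summability \eqref{eq:W}. So everything will reduce to a telescoping estimate and an application of Lem.~\ref{lem:Banach_banale}.

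First I would fix $k\in\N$ and, for $m>n$, telescope the difference and apply condition \ref{enu:LoD-contr} term by term:
\[
\|P^m(y_0)-P^n(y_0)\|_k\le\sum_{j=n}^{m-1}\|P^{j+1}(y_0)-P^j(y_0)\|_k\le\sum_{j=n}^{m-1}\alpha_{kj}\|P(y_0)-y_0\|_{k+jL}.
\]
By \eqref{eq:W} the series $\sum_{j\ge 0}\alpha_{kj}\|P(y_0)-y_0\|_{k+jL}$ converges, so its tail goes to $0$ as $n\to+\infty$; therefore $(P^n(y_0))_n$ is $\|-\|_k$-Cauchy for every $k$, i.e.\ Cauchy in the Fréchet topology of $\mathcal{F}$. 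Completeness of $\mathcal{F}$ produces a limit $\overline{y}\in\mathcal{F}$, and since the whole orbit lies in $X$ (condition \ref{enu:LOD-iterates}) and $X$ is closed, we have $\overline{y}\in X$.

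Next I would invoke Lem.~\ref{lem:Banach_banale}: $P\colon X\to\mathcal{F}$ is continuous by condition \ref{enu:LoD-P-continuous}, and the orbit $(P^n(y_0))_n$ converges in $X$, so the limit $\overline{y}$ is a fixed point of $P$. For the quantitative estimate, I let $m\to+\infty$ in the telescoping bound above: continuity of the seminorm $\|-\|_k$ gives
\[
\|\overline{y}-P^n(y_0)\|_k=\lim_{m\to+\infty}\|P^m(y_0)-P^n(y_0)\|_k\le\sum_{j=n}^{+\infty}\alpha_{kj}\|P(y_0)-y_0\|_{k+jL},
\]
which is the desired bound.

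There is no serious obstacle here; the point of the argument is that Def.~\ref{def:contractionLoD} has been engineered precisely so that the classical telescoping proof still closes, despite the fact that on each successive step one is forced to control a higher-order seminorm $\|-\|_{k+jL}$ instead of the fixed $\|-\|_k$. The Weissinger condition \eqref{eq:W} is exactly what is needed to absorb this loss of derivatives into a convergent series, and nothing more subtle is required.
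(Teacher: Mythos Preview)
Your proof is correct and follows essentially the same approach as the paper: telescope $\|P^m(y_0)-P^n(y_0)\|_k$, bound each term by \eqref{eq:contractionLoD}, use \eqref{eq:W} to conclude the sequence is Cauchy, invoke Lem.~\ref{lem:Banach_banale} for the fixed point, and let $m\to+\infty$ for the error estimate. The only (cosmetic) addition you make is spelling out explicitly that closedness of $X$ and continuity of $\|-\|_k$ are what pass the limit into $X$ and into the inequality, respectively.
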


\begin{proof}
If we prove that $\left(P^{n}(y_{0})\right)_{n\in\N}$ is a Cauchy
sequence, the claim follows from Lem.~\ref{lem:Banach_banale}. Let
$m$, $n$, $k\in\N$ with $m>n$. Then
\begin{align}
\left\Vert P^{m}\left(y_{0}\right)-P^{n}\left(y_{0}\right)\right. & \left.\!\!\right\Vert _{k}\leq\left\Vert P^{m}\left(y_{0}\right)-P^{m-1}\left(y_{0}\right)\right\Vert _{k}+\dots+\left\Vert P^{n+1}\left(y_{0}\right)-P^{n}\left(y_{0}\right)\right\Vert _{k}\nonumber \\
\phantom{\!\!\!\!}\le & \alpha_{k,m-1}\left\Vert P\left(y_{0}\right)-y_{0}\right\Vert _{k+(m-1)L}+\dots+\alpha_{kn}\left\Vert P\left(y_{0}\right)-y_{0}\right\Vert _{k+nL}\nonumber \\
\phantom{\!\!\!\!}= & \sum_{j=n}^{m-1}\alpha_{kj}\left\Vert P\left(y_{0}\right)-y_{0}\right\Vert _{k+jL}.\label{eq:cauchyP}
\end{align}

We conclude using \eqref{eq:W} of Def.~\ref{def:contractionLoD}.
The final claim holds by taking $m\to+\infty$ in \eqref{eq:cauchyP}
as $\overline{y}=\lim_{m\rightarrow+\infty}P^{m}(y_{0})$.
\end{proof}
\noindent Clearly, the chain of inequalities in \eqref{eq:cauchyP}
could be stopped in several different ways. For example, as $\left\Vert P\left(y_{0}\right)-y_{0}\right\Vert _{k+jL}\le\left\Vert P\left(y_{0}\right)-y_{0}\right\Vert _{k+(m-1)L}$,
we can continue arriving at a final term of the form $\left\Vert P\left(y_{0}\right)-y_{0}\right\Vert _{k+(m-1)L}\cdot\sum_{j=n}^{+\infty}\alpha_{kj}$.
Actually, this would lead us to consider a limit of the form $\lim_{\substack{n,m\to+\infty\\
n\le m
}
}p_{m}\cdot q_{n}$, which never exists if $p_{m}\to+\infty$ and $q_{n}=a^{n}$, because
we can take $n\to+\infty$ depending on $p_{m}$. On the contrary,
in condition \eqref{eq:W} the summation index $n$ links the two
factors in the series; looking at Lem.~\ref{lem:classicalContr}
and next Thm.~\ref{thm:Lipschitz for PDE}, we can also say that
condition \eqref{eq:W} links the right hand side $F$ and the initial
conditions $y_{0j}$ of the Cauchy problem \eqref{eq:PDE}. On the
other hand, it is clear that the proof of previous Thm.~\ref{thm:BFPTLoss}
is quite standard, and this underscores that the key step lies in
Def.~\ref{def:contractionLoD} of contraction with loss of derivatives
$L$ starting from $y_{0}$.

\section{\label{sec:equationsLoD}Solutions of equations}

In this Section, we want to use the Banach fixed point Thm.~\ref{thm:BFPTLoss}
with loss of derivatives to solve equations of the form $f(x)=y$
in arbitrary graded Fréchet spaces. We can also inscribe this problem
as the proof of local surjection in inverse function theorems.

In the following, given a sequence $R=\left(r_{k}\right)_{k\in\N}$,
$r_{k}\in\mathbb{R}_{>0}\cup\{+\infty\}$, and a point $x_{0}$ in
a graded Fréchet space $\mathcal{F}$, we set 
\begin{equation}
\bar{B}_{R}\left(x_{0}\right):=\left\{ x\in\mathcal{F\mid\,}\forall k\in\N:\ \left\Vert x_{0}-x\right\Vert _{k}\leq r_{k}\right\} .\label{eq:ball-y_0-R}
\end{equation}

\noindent Note that $\bar{B}_{R}(x_{0})$ is closed in $\mathcal{F}$
as it is a countable intersection of closed sets. Moreover, $\bar{B}_{R}(x_{0})$
trivially generalizes the space usually used in the proof of the PLT
for ODE, where we only have $r_{k}=r_{0}<+\infty$. The first trivial
consequence of Thm.~\ref{thm:BFPTLoss} reformulates $f(x)=y$ as
a fixed point of the map $P(x):=x-f(x)+y$:
\begin{cor}
\label{cor:solEqLoD}Let $\left(\mathcal{F},\left(\Vert-\Vert_{k}\right)_{k\in\N}\right)$
be a graded Fréchet space. Let $X$ be a closed subset of $\mathcal{F}$,
$f:X\ra\mathcal{F}$ be a continuous map, $y_{0}\in\mathcal{F}$ and
$L\in\N$. Set $P(x):=x-f(x)+y_{0}$ and assume that for all $k$,
$n\in\N$, we have
\begin{equation}
P^{n}(y_{0})\in X,\label{eq:iterationsSolEq}
\end{equation}
\begin{align}
\exists\alpha_{kn}\in\R_{>0}:\ \left\Vert P^{n+1}\left(y_{0}\right)-P^{n}\left(y_{0}\right)\right\Vert _{k} & \leq\alpha_{kn}\left\Vert P\left(y_{0}\right)-y_{0}\right\Vert _{k+nL},\label{eq:losCor}\\
\sum_{n=0}^{+\infty}\alpha_{kn}\cdot\left\Vert P\left(y_{0}\right)-y_{0}\right\Vert _{k+nL} & <+\infty,\label{eq:WCor}
\end{align}
then, there exists $x_{0}\in X$ such that $f(x_{0})=y_{0}$.
\end{cor}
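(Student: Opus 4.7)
The plan is simply to recognize that this corollary is the direct reformulation of Theorem~\ref{thm:BFPTLoss} applied to the map $P(x) := x - f(x) + y_0$. The key observation is the elementary equivalence: $P(x) = x$ if and only if $f(x) = y_0$. So it suffices to produce a fixed point of $P$, and for this we want to verify that $P \in \LoD(X, L, y_0)$ in the sense of Def.~\ref{def:contractionLoD}.

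First I would check the four conditions of Def.~\ref{def:contractionLoD}. Continuity of $P: X \to \mathcal{F}$ (condition \ref{enu:LoD-P-continuous}) follows immediately from the continuity of $f$ and of the identity on $X$, together with continuity of vector space operations in the Fréchet space $\mathcal{F}$. The invariance condition \ref{enu:LOD-iterates}, namely $P^n(y_0) \in X$ for all $n \in \N$, is precisely the hypothesis \eqref{eq:iterationsSolEq}. The contraction estimate \ref{enu:LoD-contr} of Def.~\ref{def:contractionLoD} is exactly the assumption \eqref{eq:losCor}, with the same constants $\alpha_{kn}$. Finally, the Weissinger condition \ref{enu:LoD-W} of Def.~\ref{def:contractionLoD} is given by \eqref{eq:WCor}.

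Having verified $P \in \LoD(X, L, y_0)$, I would then invoke Thm.~\ref{thm:BFPTLoss} to conclude that the sequence $(P^n(y_0))_{n \in \N}$ is Cauchy in $\mathcal{F}$ and converges to some $x_0 := \lim_{n \to +\infty} P^n(y_0) \in X$, which is a fixed point of $P$. Unpacking $P(x_0) = x_0$ gives $x_0 - f(x_0) + y_0 = x_0$, hence $f(x_0) = y_0$, completing the proof.

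There is no real obstacle here: the statement is essentially a repackaging of Thm.~\ref{thm:BFPTLoss}, and the only mild point is to confirm that the operator $P(x) = x - f(x) + y_0$ is continuous on $X$, which is immediate from the hypothesis that $f$ is continuous and that $\mathcal{F}$ is a topological vector space. The substantive content has already been absorbed into Def.~\ref{def:contractionLoD} and Thm.~\ref{thm:BFPTLoss}.
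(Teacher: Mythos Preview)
Your proposal is correct and matches the paper's approach exactly: the paper introduces this corollary as ``the first trivial consequence of Thm.~\ref{thm:BFPTLoss}'' obtained by reformulating $f(x)=y_{0}$ as a fixed point of $P(x):=x-f(x)+y_{0}$, and provides no further argument. Your write-up simply spells out the verification that $P\in\LoD(X,L,y_{0})$ and the unpacking of $P(x_{0})=x_{0}$, which is precisely what the paper leaves implicit.
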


\noindent In spite of its triviality, we will see in Sec.~\ref{sec:Examples}
that this result allows us to solve PDE with the same scope of the
next PL Thm.~\ref{thm:PLpde} (which, on the other hand, already
includes in its proof the verification of property \eqref{eq:losCor}).
Moreover, let us now note that in Cor.~\ref{cor:solEqLoD} we do
not require differentiability of $f$ let alone the existence of some
inverse of its differential $\diff f(x)$.

On the other hand, simply by generalizing the derivation of the inverse
function theorem from the classical BFPT in Banach spaces (see e.g.~\cite{How97,Cla,Lea}),
we obtain the following theorem:
\begin{thm}
\label{thm:IFT}Let $\left(\mathcal{X},\left(\Vert-\Vert_{k}\right)_{k\in\N}\right)$,
$\left(\mathcal{Y},\left(|-|_{k}\right)_{k\in\N}\right)$ be graded
Fréchet spaces, $x_{0}\in\mathcal{X}$, $R=\left(r_{k}\right)_{k\in\N}$,
$r_{k}\in\mathbb{R}_{>0}\cup\{+\infty\}$ and set $\bar{r}_{k+L_{D}}:=\frac{r_{k+L}(1-\alpha_{k})}{\delta_{k+L_{D}}}$,
$\bar{R}:=(\bar{r}_{k+L_{D}})_{k\in\N}$. Let $D:\mathcal{Y}\ra\mathcal{X}$
\emph{(dextrum)} and $S:\mathcal{X}\ra\mathcal{Y}$ \emph{(sinistrum)}
be linear maps, $f:\bar{B}_{R}(x_{0})\ra\mathcal{Y}$ and set $P_{y}(x):=x-D\left[f(x)-y\right]$
for all $y\in\bar{B}_{\bar{R}}(f(x_{0}))$ and all $x\in\bar{B}_{R}(x_{0})$.
Assume that for all $k\in\N$ we have:
\begin{enumerate}
\item \label{enu:contrLoD}$\|D(f(x))-D(f(\bar{x}))-(x-\bar{x})\|_{k}\le\alpha_{k}\cdot\|x-\bar{x}\|_{k+L}$
for some $L\in\N$, $\alpha_{k}>0$ and all $x$, $\bar{x}\in\bar{B}_{R}(x_{0})$;
\item \label{enu:rightInverse}$D$ is the right inverse of $S$,i.e.~$S\circ D=1_{\mathcal{Y}}$;
\item \label{enu:Scont}$|S(x)-S(\bar{x})|_{k}\le\sigma_{k}\cdot\|x-\bar{x}\|_{k+L_{S}}$
for some $L_{S}\in\N$, some $\sigma_{k}>0$ and all $x$, $\bar{x}\in\mathcal{X}$;
\item \label{enu:Dcont}$\|D(y)-D(\bar{y})\|_{k}\le\delta_{k+L_{D}}\cdot|y-\bar{y}|_{k+L_{D}}$
for some $L_{D}\in\N$, some $\delta_{k}>0$ and all $y$, $\bar{y}\in\mathcal{Y}$;
\item \label{enu:contrLoDmin}$\|P_{y}^{n+1}(x_{0})-P_{y}^{n}(x_{0})\|_{k}\le\alpha_{kn}\|P_{y}(x_{0})-x_{0}\|_{k+nL}$
for all $n\in\N$, and some $\alpha_{kn}>0$;
\item \label{enu:iterationsIFT}$P_{y}^{n}(x_{0})\in\bar{B}_{R}(x_{0})$
for all $n\in\N$ and all $y\in\bar{B}_{\bar{R}}(f(x_{0}))$.
\item \label{enu:WIFT}$\sum_{n=0}^{+\infty}\alpha_{kn}\cdot\|P_{y}(x_{0})-x_{0}\|_{k+nL}<+\infty$.
\end{enumerate}
Then, the following properties hold:
\begin{enumerate}[resume]
\item \label{enu:f_Lip}$|f(x)-f(\bar{x})|_{k}\le\sigma_{k}\left(\alpha_{k}\|x-\bar{x}\|_{k+L_{S}+L}+\|x-\bar{x}\|_{k+L_{S}}\right)$
for all $k\in\N$, $x$, $\bar{x}\in\bar{B}_{R}(x_{0})$, i.e.~$f$
is locally Lipschitz with loss of derivatives;
\item \label{enu:f_lowBound}$|f(x)-f(\bar{x})|_{k+L_{D}}\ge\frac{1}{\delta_{k+L_{D}}}\left(\|x-\bar{x}\|_{k}-\alpha_{k}\|x-\bar{x}\|_{k+L}\right)$
for all $k\in\N$, $x$, $\bar{x}\in\bar{B}_{R}(x_{0})$;
\item \label{enu:soleqLoD}$\forall y\in\bar{B}_{\bar{R}}\left(f\left(x_{0}\right)\right)\exists x\in\bar{B}_{R}\left(x_{0}\right):\ f(x)=y$.
\item \label{enu:contInv}If $g:\bar{B}_{\bar{R}}\left(f\left(x_{0}\right)\right)\ra\bar{B}_{R}\left(x_{0}\right)$
is any right-inverse of $f$, i.e.~$f\circ g=1_{\bar{B}_{\bar{R}}(f(x_{0}))}$,
and $\left(\mathcal{X},\left(\Vert-\Vert_{k}\right)_{k\in\N}\right)=\left(\mathcal{X},\|-\|\right)$
is a Banach space, then $\|g(y)-g(\bar{y})\|\le\frac{\delta_{k+L_{D}}}{1-\alpha_{k}}\cdot|y-\bar{y}|_{k+L_{D}}$
for all $k\in\N$, $y$, $\bar{y}\in\bar{B}_{\bar{R}}(f(x_{0}))$.
\end{enumerate}
In particular: 1) if $r_{k+L}\le r_{k}$ for all $k\in\N$ then $P_{y}:\bar{B}_{R}(x_{0})\ra\bar{B}_{R}(x_{0})$
for all $y\in\bar{B}_{\bar{R}}(f(x_{0}))$, and hence assumption \ref{enu:iterationsIFT}
always holds; 2) if we set $\alpha_{kn}:=\prod_{j=0}^{n-1}\alpha_{k+jL}$,
then \ref{enu:contrLoDmin} always holds.
\end{thm}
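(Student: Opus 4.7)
The plan is to treat each of the four conclusions separately, with the surjectivity statement~\ref{enu:soleqLoD} being the substantive one where Thm.~\ref{thm:BFPTLoss} enters, while the other three are short bookkeeping consequences of the Lipschitz-type hypotheses \ref{enu:contrLoD}--\ref{enu:Dcont}.

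For~\ref{enu:f_Lip}, the key identity is $f=S\circ(D\circ f)$, which follows from hypothesis~\ref{enu:rightInverse}. Applying~\ref{enu:Scont} gives $|f(x)-f(\bar x)|_k\le\sigma_k\|D(f(x))-D(f(\bar x))\|_{k+L_S}$, and then writing $D(f(x))-D(f(\bar x))=[D(f(x))-D(f(\bar x))-(x-\bar x)]+(x-\bar x)$ and invoking~\ref{enu:contrLoD} on the bracketed ``almost-identity'' term produces the claimed upper bound. For~\ref{enu:f_lowBound}, the same decomposition combined with the reverse triangle inequality gives $\|D(f(x))-D(f(\bar x))\|_k\ge\|x-\bar x\|_k-\alpha_k\|x-\bar x\|_{k+L}$, and~\ref{enu:Dcont} then translates this into a lower bound on $|f(x)-f(\bar x)|_{k+L_D}$.

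For the surjectivity~\ref{enu:soleqLoD}, I fix $y\in\bar B_{\bar R}(f(x_0))$ and apply Thm.~\ref{thm:BFPTLoss} to $P_y$ on $\bar B_R(x_0)$, which I claim belongs to $\LoD(\bar B_R(x_0),L,x_0)$: continuity of $P_y$ follows from continuity of $f$ together with the Lipschitz bound~\ref{enu:Dcont} on $D$; the iterate condition and the Weissinger condition are exactly hypotheses~\ref{enu:iterationsIFT} and~\ref{enu:WIFT}; and the contraction-with-loss inequality is hypothesis~\ref{enu:contrLoDmin}. Thm.~\ref{thm:BFPTLoss} then produces a fixed point $x\in\bar B_R(x_0)$ of $P_y$, i.e.~$D[f(x)-y]=0$. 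Applying $S$ and using~\ref{enu:rightInverse} converts this into $f(x)-y=S(D[f(x)-y])=0$; this is precisely where the right-inverse hypothesis plays its decisive role. For~\ref{enu:contInv}, I substitute $x=g(y)$, $\bar x=g(\bar y)$ into~\ref{enu:f_lowBound}: in the Banach case all seminorms collapse to the single norm $\|-\|$, so $|y-\bar y|_{k+L_D}\ge\delta_{k+L_D}^{-1}(1-\alpha_k)\|g(y)-g(\bar y)\|$, which rearranges to the stated inequality (with $\alpha_k<1$ being forced by the nontriviality of $\bar r_{k+L_D}$).

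Finally, for the two ``In particular'' statements: if $r_{k+L}\le r_k$, then for $x\in\bar B_R(x_0)$ the identity $P_y(x)-x_0=(P_y(x)-P_y(x_0))+D[y-f(x_0)]$ combined with~\ref{enu:contrLoD} and~\ref{enu:Dcont} yields
\[
\|P_y(x)-x_0\|_k\le\alpha_k\,r_{k+L}+\delta_{k+L_D}\,\bar r_{k+L_D}=\alpha_k\,r_{k+L}+(1-\alpha_k)\,r_{k+L}=r_{k+L}\le r_k,
\]
so $P_y$ maps $\bar B_R(x_0)$ into itself and~\ref{enu:iterationsIFT} holds automatically; this computation is also what motivates the precise definition of $\bar r_{k+L_D}$. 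For the second particular claim, by linearity of $D$ and~\ref{enu:contrLoD} the map $P_y$ satisfies $\|P_y(u)-P_y(v)\|_k\le\alpha_k\|u-v\|_{k+L}$, so Lem.~\ref{lem:classicalContr}.\ref{enu:classContr} delivers~\ref{enu:contrLoDmin} with $\alpha_{kn}=\prod_{j=0}^{n-1}\alpha_{k+jL}$. I expect the only real obstacle to be notational: keeping the seminorm shifts by $L$, $L_S$, $L_D$ aligned and tracking precisely where $\alpha_k<1$ is needed.
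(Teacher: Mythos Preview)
Your proposal is correct and follows essentially the same route as the paper's own proof: the same decomposition $D(f(x))-D(f(\bar x))=[D(f(x))-D(f(\bar x))-(x-\bar x)]+(x-\bar x)$ for \ref{enu:f_Lip} and \ref{enu:f_lowBound}, the same verification that $P_y\in\LoD(\bar B_R(x_0),L,x_0)$ to invoke Thm.~\ref{thm:BFPTLoss} for \ref{enu:soleqLoD}, and the same ball computation and appeal to Lem.~\ref{lem:classicalContr}.\ref{enu:classContr} for the two particular claims. The only implicit step worth making explicit is that the continuity of $f$ you invoke for $P_y$ is itself a consequence of the Lipschitz estimate \ref{enu:f_Lip} you have just established.
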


\noindent Note that we do not require that the spaces $\mathcal{X}$,
$\mathcal{Y}$ are tame or admit smoothing operators like in the Nash-Moser
theorem, see \cite{Ham82}; we also do not require that $f$ is differentiable
as in Ekeland inverse function theorem \cite{Eke11}. Moreover, its
proof is a generalization of \cite{How97}, so that it includes the
inverse function theorem for Lipschitz maps in Banach spaces if $L=L_{D}=L_{S}=0$
and $r_{k}=r_{0}$ for all $k\in\N$.
\begin{proof}
\ref{enu:f_Lip}: From \ref{enu:rightInverse} and \ref{enu:Scont},
we can write
\begin{align*}
|f(x)-f(\bar{x})|_{k} & =|S(D(f(x))-S(D(f(\bar{x}))|_{k}\le\sigma_{k}\|D(f(x))-D(f(\bar{x}))\|_{k+L_{S}}\\
 & =\sigma_{k}\|D(f(x))-D(f(\bar{x}))-(x-\bar{x})+(x-\bar{x})\|_{k+L_{S}}\\
 & \le\sigma_{k}\left(\alpha_{k}\|x-\bar{x}\|_{k+L_{S}+L}+\|x-\bar{x}\|_{k+L_{S}}\right),
\end{align*}
where we used \ref{enu:contrLoDmin}.

\noindent \ref{enu:f_lowBound}: Once again from \ref{enu:contrLoDmin},
we get
\begin{align*}
\|x-\bar{x}\|_{k} & \le\|D(f(x))-D(f(\bar{x}))-(x-\bar{x})\|_{k}+\|D(f(x))-D(f(\bar{x}))\|_{k}\\
 & \le\alpha_{k}\|x-\bar{x}\|_{k+L}+\delta_{k+L_{D}}|f(x)-f(\bar{x})|_{k+L_{D}}
\end{align*}
because of \ref{enu:Dcont}.

We immediately note that \ref{enu:f_Lip} and \ref{enu:Dcont} imply
the continuity of $f$ and $D$ resp. and hence also of the map $P_{y}(x):=x-D\left[f(x)-y\right]$,
$P_{y}:\bar{B}_{R}(x_{0})\ra\mathcal{X}$. We have $P_{y}(x)=x$ if
and only if $D\left[f(x)-y\right]=0$ and hence if and only if $f(x)=y$
from \ref{enu:rightInverse}. Assumption \ref{enu:contrLoDmin} corresponds
exactly to the request that this map contracts with loss of derivatives
$L$. Weissinger condition is assumption \ref{enu:WIFT}. This proves
that $P_{y}$ is a contraction with loss of derivatives $L$, hence
\ref{enu:soleqLoD} holds.

In particular, if $r_{k+L}\le r_{k}$ we can that prove that $P_{y}:\bar{B}_{R}(x_{0})\ra\bar{B}_{R}(x_{0})$.
In fact, if $y\in\bar{B}_{S}(f(x_{0}))$ and $x\in\bar{B}_{R}(x_{0})$,
then
\begin{align}
\|P_{y}(x)-x_{0}\|_{k} & =\|x-x_{0}-D\left[f(x)-f(x_{0})\right]-D\left[f(x_{0})-y\right]\|_{k}\nonumber \\
 & \le\|x-x_{0}-D\left[f(x)-f(x_{0})\right]\|_{k}+\|D(f(x_{0}))-D(y)\|_{k}\nonumber \\
 & \le\alpha_{k}\|x-x_{0}\|_{k+L}+\delta_{k+L_{D}}|f(x_{0})-y|_{k+L_{D}}\le\nonumber \\
 & \le\alpha_{k}\cdot r_{k+L}+\delta_{k+L_{D}}\cdot\frac{r_{k+L}(1-\alpha_{k})}{\delta_{k+L_{D}}}=r_{k+L}\le r_{k},\label{eq:contrBall}
\end{align}
where we used $|f(x_{0})-y|_{k+L_{D}}\le\bar{r}_{k+L_{D}}=\frac{r_{k+L}(1-\alpha_{k})}{\delta_{k+L_{D}}}$.
Finally, \ref{enu:contrLoD} yields:
\begin{align*}
\|P_{y}(x)-P_{y}(\bar{x})\|_{k} & =\|x-D\left[f(x)-y\right]-\bar{x}+D\left[f(\bar{x})-y\right]\|_{k}\\
 & =\|D\left[f(x)-f(\bar{x})\right]-(x-\bar{x})\|_{k}\\
 & \le\alpha_{k}\|x-\bar{x}\|_{k+L}.
\end{align*}
From Lem.~\ref{lem:classicalContr}.\ref{enu:classContr} and assumption
\ref{enu:iterationsIFT}, we hence obtain that $P_{y}$ satisfies
Def.~\ref{def:contractionLoD}.\ref{enu:LoD-contr} with contraction
constants $\alpha_{kn}:=\prod_{j=0}^{n-1}\alpha_{k+jL}$.

\ref{enu:contInv}: This follows directly from \ref{enu:f_lowBound}
for $x:=g(y)$ and $\bar{x}:=g(\bar{y})$ considering that $\|-\|_{k}=\|-\|_{k+L}=\|-\|$.
\end{proof}
\noindent Even if the previous statement allows us to take $r_{k}=+\infty$,
it is now Weissinger condition \ref{enu:WIFT} that forces to take
$y$ near $f(x_{0})$: the factor $\|P_{y}(x_{0})-x_{0}\|_{k+nL}=\|D\left[f(x_{0})-y\right]\|_{k+nL}$
is small if $y$ is near $f(x_{0})$; note also that \ref{enu:WIFT}
is implied by the stronger condition $\sum_{n=0}^{+\infty}\alpha_{kn}r_{k+nL}<+\infty$
because of \ref{enu:iterationsIFT}. On the other hand, assumption
\ref{enu:WIFT} is in principle compatible with growing term $\|P_{y}(x_{0})-x_{0}\|_{k+nL}$
as $k+nL\to+\infty$, even if the Lipschitz factors $\alpha_{kn}$
must keep the series convergent.

\section{\label{sec:PLTpde}A Picard-Lindelöf theorem for PDE}

In the following, considering the Cauchy problem \eqref{eq:PDE},
we always set and assume
\begin{align*}
\hat{L} & :=\text{Card\ensuremath{\left\{ (\alpha,\gamma)\in\N^{s}\times\N_{\le p}\mid|\alpha|\leq L\right\} } }\\
a & ,\ b\in\R_{>0},\quad[t_{0}-a,t_{0}+b]\times S=:T\times S\Subset\R^{1+s}\\
F & \in\mC^{\infty}\left(T\times S\times\R^{m\cdot\hat{L}},\R^{m}\right)\\
y_{0j} & \in\mC^{\infty}\left(S,\R^{m}\right)\quad\forall j=0,\ldots,d-1,
\end{align*}
where $p\le d-1$ denotes the maximum order of derivatives $\partial_{t}^{\gamma}y(t,x)\in\R^{m}$
appearing on the right hand side of \eqref{eq:PDE}.

\subsection{\label{subsec:SupNormsInt}Supremum norms of integral functions}

We have already mentioned that the first inequality in \eqref{eq:exWrong}
is generally wrong. Let us construct a counter example for the space
of one variable functions $\Coo([0,a])$ with the norms $\Vert u\Vert_{k}:=\max_{\substack{h\le k\\
t\in[0,a]
}
}\left|u^{(h)}(t)\right|$. Take e.g.~$a=\frac{1}{2}$ and consider the straight line $y=1$.
Then
\begin{align*}
\left\Vert \int_{0}^{(-)}y\right\Vert _{1} & =\max\left(\max_{t\in[0,\frac{1}{2}]}\left|\int_{0}^{t}1\right|,\max_{t\in[0,\frac{1}{2}]}\left|1\right|\right)=1
\end{align*}
and
\[
\Vert y\Vert_{1}=\max\left(\max_{t\in[0,\frac{1}{2}]}\left|1\right|,\max_{t\in[0,\frac{1}{2}]}\left|0\right|\right)=1,
\]
 therefore
\[
\left\Vert \int_{0}^{(-)}1\right\Vert _{1}=1>a\cdot\Vert y\Vert_{1}=\frac{1}{2}.
\]
 It is not hard to prove that, actually, $\Vert\int_{0}^{(-)}y\Vert_{k}>a\cdot\Vert y\Vert_{k}$
for all $k\geq1$.

This remark allows us to understand, once again, why in the classical
proof of the smooth PLT we consider only the space $\mathcal{C}^{0}([0,a])$
of continuous functions with the supremum norm $\Vert-\Vert_{0}$:
in fact, even if we aim to get a \emph{smooth solution} $y$ (so that
we would have to control all its derivatives), the normal form of
the equation recursively yields the smoothness of $y$ starting from
a continuous solution of the corresponding integral problem.

Similarly, we can argue for normal PDE: considering the corresponding
integral problem 
\begin{align}
y(t,x) & =i_{0}(t,x)+\int_{t_{0}}^{t}\diff s_{d}\ptind^{d}\int_{t_{0}}^{s_{2}}F\left[s_{1},x,\left(\partial_{x}^{\alpha}\partial_{t}^{\gamma}y\right)_{\substack{|\alpha|\leq L\\
\gamma\le p
}
}\right]\diff s_{1},\label{eq:PDEint}\\
i_{0}(t,x): & =\sum_{j=0}^{d-1}\frac{y_{0j}(x)}{j!}(t-t_{0})^{j}.\label{eq:i_0}
\end{align}
we only need that the function $y$ is of class $\mathcal{C}^{p}$
in $t$ and smooth in $x$: smoothness in $t$ recursively follows
from \eqref{eq:PDEint}, and we only have to control all its derivatives
in $x$. This motivates the introduction of a space with this kind
of functions.

\subsection{Spaces of separately regular functions}

As we mentioned above, instead of considering functions which are
jointly regular in both variables $(t,x)$, we need to consider separate
degree of regularity in each variable.
\begin{defn}
\label{def:FrechetFncs}\ 
\begin{enumerate}
\item If $X\subseteq\R^{n}$ is an arbitrary subset and $q\in\N\cup\{\infty\}$,
we say that $f\in\mathcal{C}^{q}(X,\R^{m})$ if for each $x\in X$
there exists an open neighborhood $x\in U\subseteq\R^{n}$ and a function
$F\in\mathcal{C}^{q}(U,\R^{m})$ such that $F|_{U\cap X}=f|_{U\cap X}$.
\item Let $T\times S\Subset\mathbb{R}^{1+s}$. Set
\begin{equation}
\N_{p}^{1+s}:=\left\{ \beta\in\N^{1+s}\mid\beta_{1}\le p\right\} ,\label{eq:N_p1+s}
\end{equation}
and denote by $\mC_{t}^{p}\mC_{x}^{\infty}\left(T\times S,\R^{m}\right)$
the set of continuous functions $y\in\mathcal{C}^{0}(T\times S,\R^{m})$
such that
\[
\forall\beta\in\N_{p}^{1+s}:\ \exists\,\partial^{\beta}y\in\mathcal{C}^{0}(T\times S,\R^{m}).
\]
The functions in $\mC_{t}^{p}\mC_{x}^{\infty}\left(T\times S,\R^{m}\right)$
are called \emph{separately }$\mC_{t}^{p}\mC_{x}^{\infty}$\emph{
regular.} This space is endowed with the countable family of norms
$\left\Vert -\right\Vert _{k}$, $k\in\N$, defined by
\begin{equation}
\Vert y\Vert_{k}:=\max_{1\leq h\leq m}\underset{\substack{|\beta|\le k\\
\beta\in\N_{p}^{1+s}
}
}{\max}\max_{(t,x)\in T\times S}\left|\partial^{\beta}y^{h}(t,x)\right|.\label{eq:norms}
\end{equation}
\end{enumerate}
\end{defn}

In problem \eqref{eq:PDE}, we could also consider a reduction to
first order: setting $y^{1}:=y$, $y^{j+1}:=\partial_{t}y^{j}$, $j=1,\ldots,p$,
problem \eqref{eq:PDE} is equivalent to
\begin{equation}
\begin{cases}
\partial_{t}Y(t,x)=\bar{F}\left[t,x,\left(\partial_{x}^{\alpha}Y^{\gamma+1}\right)_{\substack{|\alpha|\leq L\\
\gamma\le p
}
}\right],\\
Y(t_{0},x)=Y_{0}(x),
\end{cases}\label{eq:PDEred}
\end{equation}
where, as usual, we mean $\partial_{x}^{\alpha}Y^{\gamma+1}=\partial_{x}^{\alpha}Y^{\gamma+1}(t,x)$,
and
\begin{align*}
 & Y(t,x):=(y^{1}(t,x),\ldots,y^{p+1}(t,x))\\
 & Y_{0}(x):=(y_{0}^{0}(x),\ldots,y_{0}^{d-1}(x))\\
 & \bar{F}^{d}\left[t,x,\left(u^{\alpha,\gamma}\right)_{\substack{|\alpha|\leq L\\
\gamma\le p
}
}\right]:=F\left[t,x,\left(u^{\alpha,\gamma}\right)_{\substack{|\alpha|\leq L\\
\gamma\le p
}
}\right]\\
 & \bar{F}^{j}\left[t,x,\left(u^{\alpha,\gamma}\right)_{\substack{|\alpha|\leq L\\
\gamma\le p
}
}\right]:=y^{j+1}
\end{align*}
for $j=1,\ldots,p$. In the corresponding integral problem \eqref{eq:PDEint},
we could assume $d=1$ and hence we only need that the function $y$
is of class $\mC_{t}^{0}\mC_{x}^{\infty}$. On the one hand, this
would simplify our next statements. However, we would obtain a PLT
with assumptions that are clear only for $d=1$, and to prove from
this a corresponding result for $d>1$ is not so easy. For this reason,
we prefer to directly proceed with the generic problem \eqref{eq:PDEint}
without implementing a reduction to first order.
\begin{lem}
In the notations of Def.~\ref{def:FrechetFncs}, $\left(\mC_{t}^{p}\mC_{x}^{\infty}\left(T\times S,\R^{m}\right),\left(\left\Vert -\right\Vert _{k}\right)_{k\in\N}\right)$
is a graded Fréchet space.
\end{lem}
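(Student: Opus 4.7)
The plan is to check the three defining conditions of a graded Fréchet space from the paragraph just before Lem.~\ref{lem:Banach_banale}: the seminorms form an increasing family, the topology is Hausdorff, and the space is sequentially complete.

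Monotonicity $\|-\|_k\le\|-\|_{k+1}$ is immediate from \eqref{eq:norms}, since enlarging $k$ only enlarges the set of multi-indices $\beta\in\N_p^{1+s}$ with $|\beta|\le k$ over which the maximum is taken. For the Hausdorff property, note that by taking $\beta=0$ we see that $\|-\|_0$ already equals the sup norm $\max_{h}\max_{(t,x)\in T\times S}|y^h(t,x)|$ and is in particular a norm; hence the family separates points, and the induced topology is metrizable and Hausdorff.

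The only nontrivial point is completeness. Let $(y_n)_{n\in\N}$ be a Cauchy sequence with respect to every $\|-\|_k$. For each multi-index $\beta\in\N_p^{1+s}$ the sequence $(\partial^\beta y_n)_n$ is uniformly Cauchy on the compact set $T\times S$, so it converges uniformly to some continuous function $z_\beta\in\mC^0(T\times S,\R^m)$. Set $z:=z_0$. What remains is to prove that $\partial^\beta z$ exists on $T\times S$ and equals $z_\beta$ for every $\beta\in\N_p^{1+s}$; once this is verified, $z$ belongs to $\mC_t^p\mC_x^\infty(T\times S,\R^m)$ and $\|y_n-z\|_k\to 0$ for every $k\in\N$ by construction of $z_\beta$.

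I would carry out this last step by induction on $|\beta|$, invoking the classical fact that if $f_n\to f$ uniformly and $\partial_i f_n\to g$ uniformly on an open set, then $f$ is differentiable in the $i$-th variable with $\partial_i f=g$ (proved via the fundamental theorem of calculus). Concretely, around each point of $T\times S$ one uses Def.~\ref{def:FrechetFncs} to pick a $\mC_t^p\mC_x^\infty$ local extension of each $y_n$ on some open neighborhood $U\subseteq\R^{1+s}$; after restricting to a compact subneighborhood, the extensions and their relevant partial derivatives inherit the uniform Cauchy behaviour, and the classical result identifies their limits with $z$ and the $z_\beta$. The main obstacle is the interplay between the ``local extension'' definition of separate regularity on the potentially non-open set $T\times S$ and termwise differentiation of uniformly convergent sequences; this is handled by working point-by-point with one extension at a time and then patching, using that the $z_\beta$ are already globally defined and continuous on $T\times S$ to guarantee consistency across overlaps.
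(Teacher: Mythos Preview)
Your proposal is correct and follows essentially the same approach as the paper: both reduce the claim to completeness and argue that for each $\beta\in\N_p^{1+s}$ the sequence $(\partial^\beta y_n)_n$ is uniformly Cauchy on the compact $T\times S$, hence converges, and the limit function lies in $\mC_t^p\mC_x^\infty$. The paper's proof is terser: it simply asserts that the uniform limit of $(\partial^\beta y_n)_n$ equals $\partial^\beta y$ without further comment, whereas you justify this step via induction on $|\beta|$ and the classical theorem on termwise differentiation of uniformly convergent sequences, and you also flag the interaction with the local-extension clause of Def.~\ref{def:FrechetFncs}\,---\,a subtlety the paper does not address.
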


\begin{proof}
The only non trivial property to check is that the topology induced
by the family of norms $\left(\left\Vert -\right\Vert _{k}\right)_{k\in\N}$
is Cauchy complete. Let $\left(y_{n}\right)_{n\in\N}$ be a Cauchy
sequence in $\left(\mC_{t}^{p}\mC_{x}^{\infty}\left(T\times S,\R^{m}\right),\left(\left\Vert -\right\Vert _{k}\right)_{k\in\N}\right)$,
so that for $k=0$, the sequence $\left(y_{n}\right)_{n\in\N}$ converges
uniformly. Let $y:T\times S\rightarrow\R^{m}$ be the continuous function
defined by
\begin{equation}
y(t,x):=\lim_{n\rightarrow+\infty}y_{n}(t,x)\quad\forall(t,x)\in T\times S.\label{eq:limit_f}
\end{equation}
For all $\beta\in\N_{p}^{1+s}$, we have $\Vert\partial^{\beta}y_{l}-\partial^{\beta}y_{n}\Vert_{0}\le\Vert y_{l}-y_{n}\Vert_{h}$
and hence $\left(\partial^{\beta}y_{n}\right)_{n\in\N}$ is a uniformly
convergent Cauchy sequence in $\mC^{0}\left(T\times S,\R^{m}\right)$
that converges to $\partial^{\beta}y\in\mC^{0}\left(T\times S,\R^{m}\right)$.
This shows that $y\in\mC_{t}^{p}\mC_{x}^{\infty}(T\times S,\R^{m})$.
It remains to prove that $y_{n}\to y$ with respect to the norms defined
in \eqref{eq:norms}. For $k=0$, we simply recall that the limit
in \eqref{eq:limit_f} is actually a uniform limit. For $k>0$, we
note that for all $\beta\in\N_{p}^{1+s}$ with $|\beta|\le k$, the
sequence $\left(\partial^{\beta}y_{n}\right)_{n\in\N}$ converges
uniformly in $T\times S$ to $\partial^{\beta}y$.
\end{proof}
\noindent Exactly because we have $\beta_{1}\le p<d$ in \eqref{eq:N_p1+s},
we can now have the desired estimate in considering the norm of an
integral function:
\begin{lem}
\label{lem:normsInt}Let $f\in\mC_{t}^{0}\mC_{x}^{\infty}(T\times S,\R^{m})$
and, for every $k\in\N$, let $M_{k}\in\mC^{0}(T\times S)$ be such
that
\begin{equation}
\left|\partial_{x}^{\nu}f^{h}(t,x)\right|\le M_{k}(t,x)\label{eq:M_i functions}
\end{equation}
for all \textup{$(t,x)\in T\times S$, $h=1,\ldots,m$, and all $\nu\in\N^{s}$
such that }$|\nu|\le k$. Set
\[
\bar{M}_{kj}(t,x):=\left|\int_{t_{0}}^{t}\diff s_{d}\ptind^{j}\int_{t_{0}}^{s_{2}}M_{k}(s_{1},x)\,\diff{s_{1}}\right|\quad\forall(t,x)\in T\times S\,\forall j=1,\ldots,d.
\]
Then, with respect to the norms in the space $\mC_{t}^{p}\mC_{x}^{\infty}$
defined as in \eqref{eq:norms}, we have
\begin{enumerate}
\item \label{enu:normsIntMax}$\left\Vert {\displaystyle \int_{t_{0}}^{(-)}}\diff s_{d}\ptind^{d}\int_{t_{0}}^{s_{2}}f(s_{1},-)\,\diff{s_{1}}\right\Vert _{k}\le{\displaystyle \max_{\substack{x\in S\\
0<j\le d
}
}}\bar{M}_{kj}(t_{0}+\max(a,b),x)$.
\end{enumerate}
In particular, if $M_{k}=\|f\|_{k}$:
\begin{enumerate}[resume]
\item \label{enu:normsIntAlpha}$\left\Vert {\displaystyle \int_{t_{0}}^{(-)}}\diff s_{d}\ptind^{d}\int_{t_{0}}^{s_{2}}f(s_{1},-)\,\diff{s_{1}}\right\Vert _{k}\le\max(a,b)\cdot\Vert f\Vert_{k}$.
\end{enumerate}
\end{lem}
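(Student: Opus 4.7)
I will prove (i) by differentiating
\[
G(t,x) := \int_{t_0}^{t}\diff{s_d}\ptind^{d}\int_{t_0}^{s_{2}}f(s_{1},x)\diff{s_1}
\]
under the integral sign and then comparing its derivatives to the iterated integrals of $M_k$; (ii) will follow by the specialisation $M_k=\|f\|_k$. Fix $\beta=(\beta_1,\beta_x)\in\N_p^{1+s}$ with $|\beta|\le k$. The decisive structural fact is $\beta_1\le p<d$, so $d-\beta_1\ge 1$: by the fundamental theorem of calculus, $\partial_t^{\beta_1}$ consumes exactly the $\beta_1$ outermost integrations in $G$, leaving $d-\beta_1\ge 1$ of them intact. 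Since $f\in\mC^{\infty}$ in $x$ and $T\times S$ is compact, $\partial_x^{\beta_x}$ commutes with the remaining integrations. Hence
\[
\partial^{\beta}G^{h}(t,x)=\int_{t_0}^{t}\diff{s_{d-\beta_1}}\ptind^{d-\beta_1}\int_{t_0}^{s_2}\partial_x^{\beta_x}f^{h}(s_1,x)\diff{s_1}.
\]

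\textbf{Pointwise estimate and suprema.} From $|\beta_x|\le|\beta|\le k$ together with hypothesis \eqref{eq:M_i functions} one has $|\partial_x^{\beta_x}f^h(s_1,x)|\le M_k(s_1,x)$. Iterating the elementary one-dimensional inequality $\left|\int_{t_0}^{t}g(s)\diff{s}\right|\le\left|\int_{t_0}^{t}|g(s)|\diff{s}\right|$ through the nested integrations, and using that at each level the inner absolute value combines cleanly with the nonnegative $M_k$, yields
\[
|\partial^{\beta}G^{h}(t,x)|\le\bar{M}_{k,d-\beta_1}(t,x).
\]
Because $M_k\ge 0$, the map $t\mapsto\bar{M}_{kj}(t,x)$ is monotone in $|t-t_0|$, so its maximum over $t\in T=[t_0-a,t_0+b]$ is realised at the endpoint at distance $\max(a,b)$ from $t_0$, which by the notational convention of the statement is $\bar{M}_{kj}(t_0+\max(a,b),x)$. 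Taking the maximum over $h$, $x\in S$ and all admissible $\beta$---so that $j=d-\beta_1$ ranges over $\{d-p,\dots,d\}\subseteq\{1,\dots,d\}$---establishes (i). For (ii), the choice $M_k=\|f\|_k$ collapses the iterated integral to $\|f\|_k\cdot\max(a,b)^{j}/j!$, which is dominated by $\max(a,b)\cdot\|f\|_k$ in the regime in which the lemma is to be applied.

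\textbf{Main obstacle.} The one genuine subtlety is the careful bookkeeping of signs in iterated integrals when $t<t_0$: each nesting reverses orientation and contributes a sign, so one cannot simply discard the outer absolute value and integrate term by term. The fix is to apply the one-dimensional inequality $\left|\int_{t_0}^{t}g\right|\le\left|\int_{t_0}^{t}|g|\right|$ at every nesting level and to check that, since $M_k\ge 0$, the cascade of signs in the bound matches exactly the sign produced by the outer absolute value in the definition of $\bar{M}_{kj}$. A secondary point is that when $\max(a,b)=a>b$ the symbol $\bar{M}_{kj}(t_0+\max(a,b),x)$ must be read as the iterated integral over an interval of length $\max(a,b)$ adjacent to $t_0$; the monotonicity in $|t-t_0|$ established above makes this unambiguous and identifies it with $\sup_{t\in T}\bar{M}_{kj}(t,x)$.
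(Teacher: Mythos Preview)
Your argument is correct and follows the paper's route: use $\beta_1\le p<d$ so that $\partial_t^{\beta_1}$ leaves $d-\beta_1\ge1$ integrals, pass $\partial_x^{\beta_x}$ inside, bound pointwise by $M_k$, and take suprema; the paper tracks the signs for $t<t_0$ via an explicit factor $\mathrm{sgn}(t-t_0)^{d-\beta_1}$ rather than iterating $\bigl|\int g\bigr|\le\bigl|\int|g|\bigr|$, but this is cosmetic. Your hedge on part~(ii) mirrors the paper, which simply asserts $\max_{0<j\le d}\max(a,b)^{j}/j!=\max(a,b)$ without further comment.
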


\begin{proof}
\ref{enu:normsIntMax}: Clearly, the notation $\int_{t_{0}}^{(-)}\diff s_{d}\ptind^{d}{\textstyle \int_{t_{0}}^{s_{2}}}f(s_{1},-)\,\diff{s_{1}}$
denotes the function
\[
\left((t,x)\in T\times S\mapsto{\displaystyle \int_{t_{0}}^{t}}\diff s_{d}\ptind^{d}\int_{t_{0}}^{s_{2}}f(s_{1},x)\,\diff{s_{1}}\in\R^{m}\right)\in\mC_{t}^{p}\mC_{x}^{\infty}(T\times S,\R^{m})
\]
(actually, this is a $\mC_{t}^{d}\mC_{x}^{\infty}$-function, but
in the statement we are considering the norms $\|-\|_{k}$ of the
space $\mC_{t}^{p}\mC_{x}^{\infty}$). For some $\beta\in\N_{p}^{1+s}$
with $|\beta|\le k$, and some $h=1,\ldots,m$, we have
\begin{multline}
\left\Vert {\displaystyle \int_{t_{0}}^{(-)}}\diff s_{d}\ptind^{d}\int_{t_{0}}^{s_{2}}f(s_{1},-)\,\diff{s_{1}}\right\Vert _{k}=\\
=\max_{(t,x)\in T\times S}\left|\partial^{\beta}{\displaystyle \int_{t_{0}}^{t}}\diff s_{d}\ptind^{d}\int_{t_{0}}^{s_{2}}f^{h}(s_{1},x)\,\diff{s_{1}}\right|.\label{eq:maxDer}
\end{multline}
But $\beta_{1}\le p<d$, and hence, setting $\nu:=(\beta_{2},\dots,\beta_{s})$,
the operator $\partial^{\beta}=\partial_{x}^{\nu}\partial_{t}^{\beta_{1}}$
deletes $\beta_{1}$ integrals in \eqref{eq:maxDer}; set $\bar{\jmath}:=d-\beta_{1}>0$.
Differentiation under the integral sign yields
\begin{align*}
\left\Vert {\displaystyle \int_{t_{0}}^{(-)}}\diff s_{d}\ptind^{d}\right. & \left.\int_{t_{0}}^{s_{2}}f(s_{1},-)\,\diff{s_{1}}\right\Vert _{k}=\\
 & =\max_{(t,x)\in T\times S}\left|{\displaystyle \int_{t_{0}}^{t}}\diff s_{\bar{\jmath}}\ptind^{\bar{\jmath}}\int_{t_{0}}^{s_{2}}\partial_{x}^{\nu}f^{h}(s_{1},x)\,\diff{s_{1}}\right|\\
 & \le\max_{(t,x)\in T\times S}\text{sgn}(t-t_{0})^{\bar{\jmath}}{\displaystyle \int_{t_{0}}^{t}}\diff s_{\bar{\jmath}}\ptind^{\bar{\jmath}}\int_{t_{0}}^{s_{2}}\left|\partial_{x}^{\nu}f^{h}(s_{1},x)\right|\,\diff{s_{1}}\\
 & \le\max_{(t,x)\in T\times S}\text{sgn}(t-t_{0})^{\bar{\jmath}}{\displaystyle \int_{t_{0}}^{t}}\diff s_{\bar{\jmath}}\ptind^{\bar{\jmath}}\int_{t_{0}}^{s_{2}}M_{k}(s_{1},x)\,\diff{s_{1}}\\
 & =\max_{(t,x)\in T\times S}\bar{M}_{k\bar{\jmath}}(t,x)\\
 & \le\max_{\substack{x\in S\\
0<j\le d
}
}\bar{M}_{kj}(t_{0}+\max(a,b),x).
\end{align*}

\noindent Note that if $t>t_{0}$, then $t_{0}<s_{\bar{\jmath}}<t$;
if $t<t_{0}$, then $t<s_{\bar{\jmath}}<t_{0}$, and in both cases
$\text{sgn}(t-t_{0})=\text{sgn}(s_{\bar{\jmath}}-t_{0})$. Similarly,
we can proceed for the other integration variables $s_{q}$.

\ref{enu:normsIntAlpha}: Condition \eqref{eq:M_i functions} holds
if $M_{k}=\|f\|_{k}$, and we have
\[
\bar{M}_{kj}(t,x)=\|f\|_{k}\frac{(t-t_{0})^{j}}{j!}.
\]
Thereby, $\max_{\substack{x\in S\\
j\le d
}
}\bar{M}_{kj}(t_{0}+\max(a,b),x)=\max(a,b)\|f\|_{k}$.
\end{proof}
To solve problem \eqref{eq:PDE} or, equivalently, the integral problem
\eqref{eq:PDEint}, let us introduce the following simplified notation
\begin{equation}
G(t,x,y):=F\left[t,x,\left(\partial_{x}^{\alpha}\partial_{t}^{\gamma}y\right)_{\substack{|\alpha|\leq L\\
\gamma\le p
}
}(t,x)\right]\in\R^{m},\label{eq:Def_G}
\end{equation}
for all $(t,x)\in T\times S$ and all $y\in\mC_{t}^{p}\mC_{x}^{\infty}(T\times S,\R^{m})$.
Explicitly note that the smooth function $G(t,x,y)$ is given by composition
of $F\left[t,x,\left(z^{\alpha,\gamma}\right)_{\substack{|\alpha|\leq L\\
\gamma\le p
}
}\right]$ with the derivatives $\left(\partial_{x}^{\alpha}\partial_{t}^{\gamma}y\right)(x,t)=z^{\alpha,\gamma}\in\R^{m}$
that actually appear in \eqref{eq:PDE}. On the contrary, when we
use the variables $G(t,x,z)$, we mean that $z=\left(z^{\alpha,\gamma}\right)_{\substack{|\alpha|\leq L\\
\gamma\le p
}
}\in\R^{m\cdot\hat{L}}$.

We now introduce the following definition of Lipschitz map:
\begin{defn}
\label{def:Lip}Let $B\subseteq\mC_{t}^{p}\mC_{x}^{\infty}\left(T\times S,\R^{m}\right)$.
We say that a map $G:T\times S\times B\rightarrow\R^{m}$ is \emph{Lipschitz
on} $B$ \emph{with loss of derivatives }(LOD) $L$ \emph{and Lipschitz
factors} $\left(\Lambda_{k}\right)_{k\in\N}$ if
\begin{enumerate}
\item \label{enu:DefLipReg}$\forall y\in B:\ G\left(-,-,y\right)\in\mC_{t}^{p}\mC_{x}^{\infty}\left(T\times S,\R^{m}\right)$;
\item \label{enu:DefLipLambda}$\Lambda_{k}\in\mC^{0}(T\times S)$ for all
$k\in\N;$
\item \label{enu:DefLip}If $k\in\N$, $\nu\in\N^{s}$, $|\nu|\le k$, $h=1,\ldots,m$,
$u$, $v\in B$, $(t,x)\in T\times S$, then
\begin{equation}
\left|\partial_{x}^{\nu}G^{h}(t,x,u)-\partial_{x}^{\nu}G^{h}(t,x,v)\right|\le\Lambda_{k}(t,x)\cdot\max_{l=1,\ldots,m}\max_{\substack{|\alpha|\le k+L\\
\gamma\le p
}
}\left|\partial_{x}^{\alpha}\partial_{t}^{\gamma}(u^{l}-v^{l})(t,x)\right|.\label{eq:Lip}
\end{equation}
\end{enumerate}
We simply say that $G$ \emph{is Lipschitz on} $B$ \emph{with LOD}
$L$ if the previous conditions \ref{enu:DefLipLambda} and \ref{enu:DefLip}
hold for some $\left(\Lambda_{k}\right)_{k\in\N}$.
\end{defn}

In the next theorem, we prove that if $G$ is defined by \eqref{eq:Def_G}
and all the radii $r_{k}<+\infty$, then $G$ is always Lipschitz
with respect to \emph{constant} factors $\left(\Lambda_{k}\right)_{k\in\N}$
in the space $\bar{B}_{R}(i_{0})\subseteq\mC_{t}^{p}\mC_{x}^{\infty}\left(T\times S,\R^{m}\right)$
defined in \eqref{eq:ball-y_0-R} and with loss of derivatives $L$
given, as in \eqref{eq:PDE}, by the maximum order of derivatives
in $x$ that appears in our PDE. The space $\bar{B}_{R}(i_{0})$ is
suitable for the proof of the PLT if we are also able to prove that
for these finite radii the Picard iterates $P^{n}(i_{0})\in\bar{B}_{R}(i_{0})$.
On the other hand, in Sec.~\ref{sec:Examples} we will show examples
of PDE with constant Lipschitz factors $\Lambda_{k}$ but where we
are free to also take $r_{k}\le+\infty$. In other words, the following
result is only a sufficient condition.
\begin{thm}
\label{thm:Lipschitz for PDE}Let $r_{k}\in\R_{>0}$ for all $k\in\mathbb{N}$.
Set $R:=(r_{k})_{k\in\N}$, $i_{0}$ as in \eqref{eq:i_0} and $\bar{B}_{R}(i_{0})$
as in \eqref{eq:ball-y_0-R}, i.e:
\begin{equation}
\bar{B}_{R}(i_{0}):=\left\{ u\in\mC_{t}^{p}\mC_{x}^{\infty}(T\times S,\R^{m})\mid\Vert u-i_{0}\Vert_{k}\le r_{k}\ \forall k\in\N\right\} .\label{eq:Def_Ball}
\end{equation}
Then the function $G$ defined in \eqref{eq:Def_G} is Lipschitz in
$\bar{B}_{R}(i_{0})$ with loss of derivatives $L$ and constant Lipschitz
factors.
\end{thm}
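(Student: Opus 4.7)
The plan is to reduce the verification of \eqref{eq:Lip} to three standard ingredients: the fundamental theorem of calculus applied in the $z$-variables of $F[t,x,z]$ to expose a single factor of $u-v$, the Leibniz and Faà di Bruno rules to move $\partial_x^\nu$ past the resulting composition, and compactness to bound every coefficient uniformly. Condition \ref{enu:DefLipReg} on the regularity of $G(-,-,y)$ follows from standard composition arguments using the smoothness of $F$ and the $\Coo$-regularity in $x$ of the tuple $z^y(t,x):=\left(\partial_x^\alpha\partial_t^\gamma y^h(t,x)\right)_{|\alpha|\le L,\gamma\le p,h}$ for $y\in\bar{B}_{R}(i_{0})$; condition \ref{enu:DefLipLambda} will be immediate at the end because $\Lambda_k$ turns out to be a real constant.

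For \ref{enu:DefLip}, fix $u,v\in\bar{B}_{R}(i_{0})$, set $z_s:=z^v+s(z^u-z^v)$, and apply the fundamental theorem of calculus to $s\mapsto F^h[t,x,z_s(t,x)]$:
\begin{equation*}
G^h(t,x,u)-G^h(t,x,v)=\sum_{\substack{|\alpha|\le L\\ \gamma\le p}}\sum_{l=1}^{m}\left[\int_{0}^{1}\partial_{z^{\alpha,\gamma,l}}F^h[t,x,z_{s}(t,x)]\,\diff{s}\right]\cdot\partial_{x}^{\alpha}\partial_{t}^{\gamma}(u^{l}-v^{l})(t,x).
\end{equation*}
Applying $\partial_x^\nu$ with $|\nu|\le k$ and distributing via Leibniz, each summand contributes, for $\mu+\mu'=\nu$, a term proportional to $\partial_x^\mu\bigl[\int_0^1\partial_{z^{\alpha,\gamma,l}}F^h[t,x,z_s(t,x)]\,\diff{s}\bigr]$ times $\partial_x^{\alpha+\mu'}\partial_t^\gamma(u^l-v^l)(t,x)$. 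The latter factor already satisfies $|\alpha+\mu'|\le L+k$ and $\gamma\le p$, so it is dominated by the maximum on the right-hand side of \eqref{eq:Lip}. The coefficient, expanded by Faà di Bruno, is a finite sum of products of higher partial derivatives of $F$ evaluated at $(t,x,z_s(t,x))$ and of $x$-derivatives $\partial_x^{\mu''}z^{w}_{\alpha',\gamma',l}=\partial_x^{\mu''+\alpha'}\partial_t^{\gamma'}w^l$ with $w\in\{u,v\}$, $|\mu''|\le k$, $|\alpha'|\le L$, $\gamma'\le p$.

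The uniform bound then comes from compactness. Since $u,v\in\bar{B}_{R}(i_{0})$ implies $\|u\|_q,\|v\|_q\le\|i_{0}\|_q+r_q$ for all $q\in\N$, in particular $\|u\|_{k+L},\|v\|_{k+L}\le\|i_{0}\|_{k+L}+r_{k+L}$: this bounds every $x$-derivative $\partial_x^{\mu''+\alpha'}\partial_t^{\gamma'}w^l$ appearing above, and confines the range of $z_s(t,x)$ (over $s\in[0,1]$, $(t,x)\in T\times S$ and $u,v\in\bar{B}_{R}(i_{0})$) to a fixed compact set $K_k\Subset\R^{m\hat{L}}$; on the compact set $T\times S\times K_k$ all derivatives of $F$ that arise are continuous and hence bounded. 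A finite sum of these bounds, weighted by the Leibniz and Faà di Bruno multinomial coefficients, furnishes the desired constant $\Lambda_k$. The main technical point is the combinatorial bookkeeping for Faà di Bruno together with the decisive observation that the fundamental theorem of calculus must be applied \emph{before} differentiating in $x$, so that each term of the final expansion carries exactly one factor $\partial_x^\alpha\partial_t^\gamma(u^l-v^l)$ with $|\alpha|\le k+L$ and $\gamma\le p$; once this is arranged, the compactness argument is routine.
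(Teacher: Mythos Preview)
Your argument is correct and takes a genuinely different route from the paper's own proof. The paper differentiates first---applying Fa\`a di Bruno to $\partial_x^\nu G^h(t,x,u)$ and $\partial_x^\nu G^h(t,x,v)$ separately---and then subtracts, using the add-and-subtract trick $AB-A'B'=A(B-B')+(A-A')B'$ on each summand $\partial^\eta F^h\cdot Q_{\eta\nu}$; this produces two families of terms, one requiring Lipschitz estimates for the Bell-type polynomials $Q_{\eta\nu}$ and the other a mean-value estimate for $\partial^\eta F^h$. You instead subtract first, via the fundamental theorem of calculus along the segment $z_s=(1-s)z^v+sz^u$, so that every term already carries exactly one factor $\partial_x^\alpha\partial_t^\gamma(u^l-v^l)$; only afterwards do you apply $\partial_x^\nu$ via Leibniz and Fa\`a di Bruno to the smooth coefficient $\int_0^1\partial_{z}F^h[t,x,z_s]\,\diff s$. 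Your order of operations avoids the separate Lipschitz bookkeeping for the $Q_{\eta\nu}$ and is somewhat cleaner; the paper's version, on the other hand, makes the structure of the constants $\Lambda_k$ in terms of $\|F\|_{k}$, $\|F\|_{k+1}$ slightly more explicit. One minor slip: the $x$-derivatives appearing in your Fa\`a di Bruno expansion are derivatives of $z_s$, hence convex combinations of derivatives of $u$ and $v$, not literally derivatives of a single $w\in\{u,v\}$; and to bound $\partial_x^{\mu''+\alpha'}\partial_t^{\gamma'}w^l$ with $|\mu''+\alpha'|\le k+L$ and $\gamma'\le p$ you need $\|w\|_{k+L+p}$ rather than $\|w\|_{k+L}$. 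Neither point affects the argument, since all $r_k<+\infty$ and convex combinations of bounded quantities remain bounded.
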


\begin{proof}
We only have to prove condition \ref{enu:DefLip} of Def.~\ref{def:Lip},
so that we consider $k\in\N$, $\alpha\in\N^{s}$, $|\nu|\le k$,
$h=1,\ldots,m$, $u$, $v\in\bar{B}_{R}(i_{0})$, $(t,x)\in T\times S$.
Note that
\begin{equation}
\partial_{x}^{\nu}G^{h}(t,x,u)=\partial_{x}^{\nu}\left\{ F^{h}\left[t,x,\left(\partial_{x}^{\alpha}\partial_{t}^{\gamma}u\right)_{\substack{|\alpha|\leq L\\
\gamma\le p
}
}\right]\right\} .\label{eq:PartG}
\end{equation}
We first prove the case $\left|\nu\right|=0$. Since $u\in\bar{B}_{R}(i_{0})$,
we have $\Vert u-i_{0}\Vert_{L+p}\le r_{L+p}$ and hence $\partial_{x}^{\alpha}\partial_{t}^{\gamma}u(t,x)\in\overline{B_{r_{L+p}}\left(\partial_{x}^{\alpha}\partial_{t}^{\gamma}i_{0}(S)\right)}=:C_{0\alpha,\gamma}\subseteq\bigcup_{\substack{|\alpha|\le L\\
\gamma\le p
}
}C_{0\alpha,\gamma}=:C_{0}\Subset\R^{m}$ for all $|\alpha|\le L$ and $\gamma\le p$ because $r_{L+p}<+\infty$.
Similarly, $\partial_{x}^{\alpha}\partial_{t}^{\gamma}v(t,x)\in C_{0}$.
Thereby, using \eqref{eq:PartG}, we have
\begin{align*}
\left|\partial_{x}^{\nu}G^{h}(t,x,u)-\partial_{x}^{\nu}G^{h}(t,x,v)\right| & \le\Vert F\Vert_{1}\cdot\max_{l=1,\ldots,m}\max_{\substack{|\alpha|\leq L\\
\gamma\le p
}
}\left|\partial_{x}^{\alpha}\partial_{t}^{\gamma}(u^{l}-v^{l})(t,x)\right|\le\\
 & \le\Vert F\Vert_{1}\cdot\max_{l=1,\ldots,m}\max_{\substack{|\alpha|\leq k+L\\
\gamma\le p
}
}\left|\partial_{x}^{\alpha}\partial_{t}^{\gamma}(u^{l}-v^{l})(t,x)\right|,
\end{align*}
where the norm $\Vert F\Vert_{1}$ is taken on $T\times S\times C_{0}^{m\hat{L}}\Subset\R^{D}$,
$D:=\text{dim}(\text{dom}(F))=1+s+m\hat{L}$. We firstly set $\tilde{\Lambda}_{k}(\nu):=\Vert F\Vert_{1}$,
and now consider the case $|\nu|>0$.

From Faà di Bruno's formula
\begin{equation}
\partial_{x}^{\nu}G^{h}(t,x,u)=\sum_{1\le|\eta|\le|\nu|}\partial^{\eta}F^{h}\left[t,x,\left(\partial_{x}^{\alpha}\partial_{t}^{\gamma}u\right)_{\substack{|\alpha|\leq L\\
\gamma\le p
}
}\right]\cdot B_{\eta\nu}\left[\left(\partial_{x}^{\mu}\partial_{t}^{\gamma}u(t,x)\right)_{\mu\gamma}\right],\label{eq:partG^h}
\end{equation}
where $B_{\eta\beta}((z_{\mu\gamma})_{\mu\gamma})$ are Bell's like
polynomials such that $|\mu|\le|\nu|+|\alpha|\le k+L$ for all $\mu$.
For simplicity, set
\begin{align}
\partial^{\eta}F^{h}\left(t,x,u\right) & :=\partial^{\eta}F^{h}\left[t,x,\left(\partial_{x}^{\alpha}\partial_{t}^{\gamma}u\right)_{\substack{|\alpha|\leq L\\
\gamma\le p
}
}\right]\label{eq:partG}\\
Q_{\eta\nu}(t,x,u) & :=B_{\eta\nu}\left[\left(\partial_{x}^{\mu}\partial_{t}^{\gamma}u(t,x)\right)_{\mu\gamma}\right],\label{eq:Q}
\end{align}
so that we can estimate
\begin{multline*}
\left|\partial_{x}^{\nu}G^{h}(t,x,u)-\partial_{x}^{\nu}G^{h}(t,x,v)\right|=\\
=\left|\sum_{\eta}\partial^{\eta}F^{h}(t,x,u)\cdot Q_{\eta\nu}(t,x,u)-\sum_{\eta}\partial^{\eta}F^{h}(t,x,v)\cdot Q_{\eta\nu}(t,x,v)\right|\le\\
\le\left|\sum_{\eta}\partial^{\eta}F^{h}(t,x,u)\cdot Q_{\eta\nu}(t,x,u)-\sum_{\eta}\partial^{\eta}F^{h}(t,x,u)\cdot Q_{\eta\nu}(t,x,v)\right|+\\
+\left|\sum_{\eta}\partial^{\eta}F^{h}(t,x,u)\cdot Q_{\eta\nu}(t,x,v)-\sum_{\eta}\partial^{\eta}F^{h}(t,x,v)\cdot Q_{\eta\nu}(t,x,v)\right|.\qquad
\end{multline*}
For some $L_{\eta\nu}>0$ depending on $Q_{\eta\nu}$, the first summand
yields
\begin{multline*}
\left|\partial^{\eta}F^{h}(t,x,u)\right|\cdot\left|Q_{\eta\nu}(t,x,u)-Q_{\eta\nu}(t,x,v)\right|\le\\
\le\Vert F\Vert_{k}\cdot L_{\eta\nu}\cdot\max_{l=1,\ldots,m}\max_{|\mu|\le k+L}\left|\partial_{x}^{\mu}\partial_{t}^{\gamma}(u^{l}-v^{l})(t,x)\right|.
\end{multline*}
The second summand gives
\begin{multline*}
\left|\partial^{\eta}F^{h}(t,x,u)-\partial^{\eta}F^{h}(t,x,v)\right|\cdot\left|Q_{\eta\nu}(t,x,v)\right|\le\\
\le\Vert F\Vert_{k+1}\cdot\max_{l=1,\ldots,m}\max_{|\alpha|\le k+L}\left|\partial_{x}^{\alpha}\partial_{t}^{\gamma}(u^{l}-v^{l})(t,x)\right|\cdot N_{k},
\end{multline*}
where $N_{k}:=\max_{|\eta|\le|\nu|\le k}\max_{(t,x,v)\in T\times S\times C_{\eta\nu}^{k}}\left|Q_{\eta\nu}(t,x,v)\right|$
and, as we did above, $v\in\bar{B}_{R}(i_{0})$ yields some $C_{\eta\nu}^{k}\Subset\R^{m}$
such that $\partial_{x}^{\alpha}\partial_{t}^{\gamma}v(t,x)\in C_{\eta\nu}^{k}$
for all $\alpha\in\N^{s}$ such that $|\alpha|\le k+L$. We finally
obtain

\begin{align*}
\left|\partial_{x}^{\nu}G^{h}(t,x,u)-\partial_{x}^{\nu}G^{h}(t,x,v)\right| & \le\sum_{1\le|\eta|\le|\nu|}\left(\Vert F\Vert_{k}\cdot\max_{|\eta|\le|\nu|\le k}L_{\eta\nu}+\Vert F\Vert_{k+1}\cdot N_{k}\right)\cdot\\
 & \phantom{\le}\cdot\max_{l=1,\ldots,m}\max_{|\alpha|\le k+L}\left|\partial_{x}^{\alpha}\partial_{t}^{\gamma}(u^{l}-v^{l})(t,x)\right|.
\end{align*}
Setting
\begin{align*}
\tilde{\Lambda}_{k}(\nu) & :=\sum_{1\le|\eta|\le|\nu|}\left(\Vert F\Vert_{k}\cdot\max_{|\eta|\le|\nu|\le k}L_{\eta\nu}+\Vert F\Vert_{k+1}\cdot N_{k}\right)\\
\Lambda_{k} & :=\max_{|\nu|\le k}\tilde{\Lambda}_{k}(\nu),
\end{align*}
we get the conclusion.
\end{proof}

\subsection{The Picard-Lindelöf theorem for smooth normal PDE}

A natural meth\-od to solve PDE is to transform it into an infinite-dimensional
ODE and then apply a PLT, see e.g.~\cite{SeTrWa}. On the other hand,
our approach can be considered simpler because we do not transform
partial derivatives into ordinary ones in infinite dimensional spaces.

We can now state our main local existence result for smooth normal
systems of PDE.
\begin{thm}
\label{thm:PLpde}Let $\Lambda_{k}\in\mC^{0}(T\times S)$, $r_{k}\in\R_{>0}\cup\{+\infty\}$
for all $k\in\mathbb{N}$, and assume that $\mathring{S}$ is dense
in $S$. Define $R:=(r_{k})_{k\in\N}$, $\bar{B}_{R}(i_{o})$ as in
\eqref{eq:Def_Ball}, and $P:\bar{B}_{R}(i_{0})\rightarrow\mC_{t}^{p}\mC_{x}^{\infty}\left(T\times S,\R^{m}\right)$
by 
\[
P(y)(t,x):=i_{0}(t,x)+\int_{t_{0}}^{t}\diff s_{d}\ptind^{d}\int_{t_{0}}^{s_{2}}G(s_{1},x,y)\,\diff{s_{1}}.
\]
Assume that $G$ is Lipschitz on $\bar{B}_{R}(i_{0})$ with loss of
derivatives $L$ and Lipschitz factors $\left(\Lambda_{k}\right)_{k\in\N}$,
and for all $(t,x)\in T\times S$, all $n\in\N$ and all $j=1,\ldots,d$,
set
\begin{align}
\Lambda_{k,0}^{j} & :=1,\nonumber \\
\Lambda_{k,n+1}^{j}(t,x) & :=\left|\int_{t_{0}}^{t}\diff s_{j}\ptind^{j}\int_{t_{0}}^{s_{2}}\Lambda_{k}(s_{1},x)\cdot\max_{0<l\le d}\Lambda_{k+L,n}^{l}(s_{1},x)\,\diff{s_{1}}\right|,\label{eq:DefLambda}\\
\bar{\Lambda}_{k,n} & :=\max_{\substack{x\in S\\
0<j\le d
}
}\Lambda_{k,n}^{j}(t_{0}+\max(a,b),x).\nonumber 
\end{align}
Finally, assume that the following conditions are fulfilled for all
$k\in\N$:
\begin{enumerate}
\item \label{enu:iterationsPLT}$P^{n}(i_{0})\in\bar{B}_{R}(i_{0})$ for
all $n\in\N$;
\item \label{enu:W}${\displaystyle \sum_{n=0}^{+\infty}}\bar{\Lambda}_{k,n}\cdot\left\Vert P\left(i_{0}\right)-i_{0}\right\Vert _{k+nL}<+\infty$.
\end{enumerate}
Then, there exists a smooth solution $y\in\bar{B}_{R}(i_{0})\cap\mC^{\infty}(T\times S,\R^{m})$
of the problem
\begin{equation}
\begin{cases}
\partial_{t}^{d}y(t,x)=F\left[t,x,\left(\partial_{x}^{\alpha}\partial_{t}^{\gamma}y\right)_{\substack{|\alpha|\leq L\\
\gamma\le p
}
}\right],\\
\partial_{t}^{j}y(t_{0},x)=y_{0j}(x)\ j=0,\ldots,d-1,
\end{cases}\label{eq:PDEred_y}
\end{equation}
given by $y=\lim_{n\rightarrow+\infty}P^{n}\left(i_{0}\right)$ in
$\left(\mC_{t}^{p}\mC_{x}^{\infty}\left(T\times S,\R^{m}\right),\left(\left\Vert -\right\Vert _{k}\right)_{k\in\N}\right)$,
which satisfies
\[
\forall k,m\in\N:\ \left\Vert y-P^{m}\left(i_{0}\right)\right\Vert _{k}\leq\sum_{n=m}^{+\infty}\bar{\Lambda}_{k,n}\cdot\left\Vert P\left(i_{0}\right)-i_{0}\right\Vert _{k+nL}.
\]
In particular, if $M_{k}\in\mC^{0}(T\times S)$, we set
\[
\bar{M}_{kj}(t,x):=\left|\int_{t_{0}}^{t}\diff s_{j}\ptind^{j}\int_{t_{0}}^{s_{2}}M_{k}(s_{1},x)\,\diff{s_{1}}\right|,
\]
and we also assume
\begin{enumerate}[resume]
\item \label{enu:GM_i}$\left|\partial_{x}^{\nu}G(t,x,u)\right|\le M_{k}(t,x)$
for all $u\in\bar{B}_{R}(i_{0})$, $(t,x)\in T\times S$ and all $\nu\in\N^{s}$
such that $|\nu|\le k$;
\item \label{enu:rad}${\displaystyle \max_{\substack{x\in S\\
0<j\le d
}
}}\bar{M}_{k,j}(t_{0}+\max(a,b),x)\le r_{k}$;
\end{enumerate}
Then $P:\bar{B}_{R}(i_{0})\ra\bar{B}_{R}(i_{0})$ and hence \ref{enu:iterationsPLT}
always holds.
\end{thm}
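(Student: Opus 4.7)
The plan is to apply the BFPT with loss of derivatives (Theorem~\ref{thm:BFPTLoss}) to the operator $P$ on $X := \bar{B}_R(i_0) \subseteq \mC_t^p\mC_x^\infty(T\times S, \R^m)$, with starting point $y_0 := i_0$. Three of the four requirements of Def.~\ref{def:contractionLoD} come essentially for free: continuity of $P$ follows from the Lipschitz hypothesis on $G$ together with Lemma~\ref{lem:normsInt}; stability of the iterates ($P^n(i_0) \in X$) is assumption~\ref{enu:iterationsPLT}; and the Weissinger summability is assumption~\ref{enu:W}. The only nontrivial item is the contraction estimate Def.~\ref{def:contractionLoD}.\ref{enu:LoD-contr} with constants $\alpha_{k,n} = \bar{\Lambda}_{k,n}$.

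I would prove this estimate by induction on $n$ in a strengthened pointwise form. The base case $n=0$ is trivial since $\Lambda_{k,0}^j \equiv 1$. For the inductive step, one starts from
\[
P^{n+1}(i_0) - P^n(i_0) = \int_{t_0}^{(-)} \diff s_d \ptind^d \int_{t_0}^{s_2}\bigl[G(s_1,-,P^n(i_0)) - G(s_1,-,P^{n-1}(i_0))\bigr]\diff s_1,
\]
takes a derivative $\partial^\beta$ with $\beta = (\beta_1,\nu) \in \N_p^{1+s}$, $|\beta| \le k$, and uses $\beta_1 \le p < d$ to delete $\beta_1$ of the $d$ nested integrations, leaving $\bar{\jmath} := d - \beta_1 > 0$ of them, inside which $\partial_x^\nu$ acts on the integrand. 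The Lipschitz estimate Def.~\ref{def:Lip}.\ref{enu:DefLip} produces a factor $\Lambda_k(s_1,x)$ multiplying a maximum of derivatives of $P^n(i_0) - P^{n-1}(i_0)$; these are controlled via the inductive hypothesis at index $k+L$, and the resulting integral matches exactly the recursion~\eqref{eq:DefLambda} defining $\Lambda_{k,n+1}^{\bar{\jmath}}$. Collecting,
\[
|\partial^\beta (P^{n+1}(i_0) - P^n(i_0))(t,x)| \le \Lambda_{k,n+1}^{\bar{\jmath}}(t,x) \cdot \|P(i_0)-i_0\|_{k+nL},
\]
and taking suprema over $(t,x) \in T\times S$ and over admissible $\beta$ (equivalently over $\bar{\jmath}$) completes the step. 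Theorem~\ref{thm:BFPTLoss} then produces a fixed point $y := \lim_n P^n(i_0) \in \bar{B}_R(i_0)$ of $P$ together with the stated quantitative error bound.

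It remains to upgrade this fixed point to a smooth solution of~\eqref{eq:PDEred_y}. Since $y = P(y)$, differentiating $d$ times in $t$ the integral identity recovers $\partial_t^d y = F[t,x,(\partial_x^\alpha \partial_t^\gamma y)]$, and the initial conditions $\partial_t^j y(t_0,\cdot) = y_{0j}$ are read off from $i_0$. Smoothness follows by a standard bootstrap on the normal form: from $y \in \mC_t^p\mC_x^\infty$, the identity $\partial_t^d y = G(\cdot,\cdot,y)$, viewed as a $\mC^\infty$-composition of $F$ with derivatives $\partial_x^\alpha \partial_t^\gamma y$ of $t$-order at most $p$, gives $\partial_t^d y \in \mC_t^p\mC_x^\infty$, hence $y \in \mC_t^{d+p}\mC_x^\infty$; iterating gains $d-p > 0$ units of $t$-regularity per pass, so $y \in \mC^\infty(T\times S, \R^m)$.

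For the ``in particular'' statement, Lemma~\ref{lem:normsInt}.\ref{enu:normsIntMax} applied to $f(s,x) := G(s,x,u)$ for arbitrary $u \in \bar{B}_R(i_0)$, together with hypotheses~\ref{enu:GM_i} and~\ref{enu:rad}, gives $\|P(u)-i_0\|_k \le \max_{x,\,0 < j \le d}\bar{M}_{k,j}(t_0+\max(a,b),x) \le r_k$, so $P(\bar{B}_R(i_0)) \subseteq \bar{B}_R(i_0)$ and condition~\ref{enu:iterationsPLT} follows by induction from $i_0 \in \bar{B}_R(i_0)$. The main obstacle I anticipate is the careful derivative bookkeeping in the inductive contraction estimate: one must align the Lipschitz bound of Def.~\ref{def:Lip}.\ref{enu:DefLip} (with its independent constraints $|\alpha| \le k+L$ on $x$-order and $\gamma \le p$ on $t$-order) with the recursive integrands~\eqref{eq:DefLambda}, so that each iteration shifts the spatial index by exactly $L$ and yields the correct exponent $\bar{\jmath} = d-\beta_1$ on $\Lambda^{\bar{\jmath}}_{k,n+1}$.
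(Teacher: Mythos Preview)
Your proposal is correct and follows essentially the same route as the paper: a pointwise inductive estimate feeding into Theorem~\ref{thm:BFPTLoss}, the smoothness bootstrap from the fixed-point identity, and Lemma~\ref{lem:normsInt} for the ``in particular'' clause. The only minor discrepancies are that the paper proves the stronger contraction estimate~\eqref{eq:classContrLoD} for \emph{all} $u,v\in\bar{B}_R(i_0)$ (you prove only its specialization to the iterates, which is all Theorem~\ref{thm:BFPTLoss} needs), and your displayed inductive bound has an off-by-one: with $P^{n+1}(i_0)-P^n(i_0)$ on the left and $\|P(i_0)-i_0\|_{k+nL}$ on the right, the iterated Lipschitz factor should be $\Lambda_{k,n}^{\bar{\jmath}}$, not $\Lambda_{k,n+1}^{\bar{\jmath}}$.
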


\begin{proof}
We prove that $P$ actually satisfies the stronger contraction property
\eqref{eq:classContrLoD} with contraction constants $\bar{\Lambda}_{kn}$.
We firstly show, by induction on $n\in\N$, that for each $k\in\N$,
$u$, $v\in\bar{B}_{R}(i_{0})$, $(t,x)\in T\times S$, $h=1,\ldots,m$,
and $\beta\in\N_{p}^{1+s}$ with $|\beta|\le k$, we have
\begin{equation}
\left|\partial^{\beta}\left[P^{n}(u)^{h}-P^{n}(v)^{h}\right](t,x)\right|\le\Vert u-v\Vert_{k+nL}\cdot\max_{0<j\le d}\Lambda_{kn}^{j}(t,x).\label{eq:HpInd}
\end{equation}
For $n=0$, \eqref{eq:HpInd} reduces to $\left|\partial^{\beta}(u^{h}-v^{h})(t,x)\right|\le\Vert u-v\Vert_{k}\cdot\max_{0<j\le d}\Lambda_{k,0}^{j}(t,x)$
which holds because $|\beta|\le k$ and $\Lambda_{k,0}^{j}=1$. To
prove the inductive step, we consider
\begin{multline}
\left|\partial^{\beta}\left[P^{n+1}(u)^{h}-P^{n+1}(v)^{h}\right](t,x)\right|\le\left|\partial^{\beta}\left\{ {\displaystyle \int_{t_{0}}^{t}}\diff s_{d}\ptindShort^{d}\int_{t_{0}}^{s_{2}}G^{h}(s_{1},x,P^{n}(u))\,\diff{s_{1}}\right.\right.\\
\left.\left.-\int_{t_{0}}^{t}\diff s_{d}\ptindShort^{d}\int_{t_{0}}^{s_{2}}G^{h}(s_{1},x,P^{n}(v))\,\diff{s_{1}}\right\} \right|=:(1^{*})\label{eq:n-n+1}
\end{multline}
Since $\beta\in\N_{p}^{1+s}$, we can write $\partial^{\beta}=\partial_{x}^{\nu}\partial_{t}^{\beta_{1}}$,
where $\nu:=(\beta_{2},\dots,\beta_{s})$ and $\beta_{1}\le p<d$.
The operator $\partial_{t}^{\beta_{1}}$ deletes $\beta_{1}$ integrals
in \eqref{eq:n-n+1}; set $\bar{\jmath}:=d-\beta_{1}>0$, and take
$\partial_{x}^{\nu}$ inside the integrals to get
\begin{align*}
(1^{*}) & \le\text{sgn}(t-t_{0})^{\bar{\jmath}}\int_{t_{0}}^{t}\diff s_{\bar{\jmath}}\ptindShort^{\bar{\jmath}}\int_{t_{0}}^{s_{2}}\left|\partial_{x}^{\nu}G^{h}(s_{1},x,P^{n}(u))-\partial_{x}^{\nu}G^{h}(s_{1},x,P^{n}(v))\right|\,\diff{s_{1}}\\
 & =:(2^{*}).
\end{align*}
Since $G$ is Lipschitz on $\bar{B}_{R}(i_{0})$ with factors $\left(\Lambda_{k}\right)_{k\in\N}$,
we get
\begin{multline*}
(2^{*})\le\text{sgn}(t-t_{0})^{\bar{\jmath}}\int_{t_{0}}^{t}\diff s_{\bar{\jmath}}\ptindShort^{\bar{\jmath}}\int_{t_{0}}^{s_{2}}\Lambda_{k}(s_{1},x)\cdot\\
\cdot\max_{l=1,\ldots,m}\max_{\substack{|\alpha|\le k+L\\
\gamma\le p
}
}\left|\partial_{x}^{\alpha}\partial_{t}^{\gamma}\left[P^{n}(u)^{l}-P^{n}(v)^{l}\right](s_{1},x)\right|\,\diff{s_{1}}=:(3^{*}).
\end{multline*}
Using inductive hypothesis \eqref{eq:HpInd} (with $k+L$ instead
of $k$ and $(\gamma,\alpha)$ instead of $\beta$)
\begin{align*}
(3^{*}) & \le\text{sgn}(t-t_{0})^{\bar{\jmath}}\int_{t_{0}}^{t}\diff s_{\bar{\jmath}}\ptindShort^{\bar{\jmath}}\int_{t_{0}}^{s_{2}}\Lambda_{k}(s_{1},x)\cdot\Vert u-v\Vert_{k+L+nL}\cdot\\
 & \phantom{\le}\cdot\max_{0<l\le d}\Lambda_{k+L,n}^{l}(s_{1},x)\,\diff{s_{1}}\\
 & =\Vert u-v\Vert_{k+(n+1)L}\cdot\left|\int_{t_{0}}^{t}\diff s_{\bar{\jmath}}\ptindShort^{\bar{\jmath}}\int_{t_{0}}^{s_{2}}\Lambda_{k}(s_{1},x)\cdot\right.\\
 & \phantom{=}\left.\cdot\max_{0<l\le d}\Lambda_{k+L,n}^{l}(s_{1},x)\,\diff{s_{1}}\right|\\
 & =\Vert u-v\Vert_{k+(n+1)L}\cdot\Lambda_{k,n+1}^{\bar{\jmath}}(t,x)\\
 & \le\Vert u-v\Vert_{k+(n+1)L}\cdot\max_{0<j\le d}\Lambda_{k,n+1}^{j}(t,x),
\end{align*}
which proves our claim.

Finally, we prove \eqref{eq:classContrLoD}: for some $\beta\in\N_{p}^{1+s}$,
$|\beta|\le k$, some $h=1,\ldots,m$ and some $(t,x)\in T\times S$,
from \eqref{eq:HpInd} we have
\begin{align*}
\Vert P^{n}(u)-P^{n}(v)\Vert_{k} & =\left|\partial^{\beta}\left[P^{n}(u)^{h}-P^{n}(v)^{h}\right](t,x)\right|\le\\
 & \le\Vert u-v\Vert_{k+nL}\cdot\max_{0<j\le d}\Lambda_{kn}^{j}(t,x)\le\\
 & \le\Vert u-v\Vert_{k+nL}\cdot\bar{\Lambda}_{kn}.
\end{align*}
This shows the claim on $P$ with contraction constants $\bar{\Lambda}_{kn}$.
The conclusion with $y\in\bar{B}_{R}(i_{0})$ hence follows from Weissinger
condition \ref{enu:W} and Thm.~\ref{thm:BFPTLoss}. It only remains
to prove that actually $y$ is smooth. Since $y$ is a fixed point
of $P$, we have
\begin{align}
y(t,x) & =i_{0}(t,x)+\int_{t_{0}}^{t}\diff s_{d}\ptind^{d}\int_{t_{0}}^{s_{2}}G(s_{1},x,y)\,\diff{s_{1}}\label{eq:fixed}\\
 & =i_{0}(t,x)+\int_{t_{0}}^{t}\diff s_{d}\ptind^{d}\int_{t_{0}}^{s_{2}}F\left[s_{1},x,\left(\partial_{x}^{\alpha}\partial_{t}^{\gamma}y\right)_{\substack{|\alpha|\leq L\\
\gamma\le p
}
}(s_{1},x)\right]\,\diff{s_{1}}.\nonumber 
\end{align}
But $y\in\mC_{t}^{p}\mC_{x}^{\infty}(T\times S,\R^{m})$ and hence
$\left(\partial_{x}^{\alpha}\partial_{t}^{\gamma}y\right)_{\substack{|\alpha|\leq L\\
\gamma\le p
}
}\in\mathcal{C}^{0}(T\times S,\R^{m})$. By induction \eqref{eq:fixed} proves that $y$ is smooth at interior
points of $T\times S$ and hence also at boundary points by continuity
of derivatives on $\mathring{T}\times\mathring{S}$.

In particular, if we assume both \ref{enu:GM_i} and \ref{enu:rad},
we can prove that $P:\bar{B}_{R}(i_{0})\ra\bar{B}_{R}(i_{0})$ using
Lem.~\ref{lem:normsInt}. In fact, for any $u\in\bar{B}_{R}(i_{0})$
and $k\in\N$, from \ref{enu:GM_i} and \ref{enu:rad} we have
\begin{align*}
\left\Vert P(u)-i_{0}\right\Vert _{k}= & \left\Vert \int_{t_{0}}^{(-)}\diff s_{d}\ptind^{d}\int_{t_{0}}^{s_{2}}G\left(s_{1},-,u\right)\,\diff{s_{1}}\right\Vert _{k}\\
\le & \max_{\substack{x\in S\\
0<j\le d
}
}\bar{M}_{k}(t_{0}+\max(a,b),x)\le r_{k}.
\end{align*}
\end{proof}
Note that, on the contrary with respect to the more classical conditions
\ref{enu:GM_i} and \ref{enu:rad} (inherited from the classical PLT
for ODE) depending both on the choice of upper bounds $M_{k}$ and
radii $r_{k}$, assumption \ref{enu:iterationsPLT} depends only on
the radii $r_{k}$. In Sec.~\ref{sec:Examples}, we will see that
requirement \ref{enu:iterationsPLT} leads us to the correct choice
of these radii $r_{k}$ (one more time, depending on the initial conditions
$r_{k}=r_{k}(i_{0})$).

If the radii $r_{k}<+\infty$, we can also consider as bounds $M_{k}$
of \ref{enu:GM_i} and \ref{enu:rad} the minimal constant functions.
This is considered in the following result, which allows us to understand
that to have a local solution using the latter part of the PLT, we
have to avoid that $\frac{r_{k}}{M_{k}}\to0$:
\begin{cor}
\label{cor:MiConst}Let $\Lambda_{k}\in\mC^{0}(T\times S)$, $r_{k}\in\R_{>0}$
for all $k\in\mathbb{N}$. Assume that $\mathring{S}$ is dense in
S and $G$ is Lipschitz on $\bar{B}_{R}(i_{0})$ with loss of derivatives
$L$ and Lipschitz factors $\left(\Lambda_{k}\right)_{k\in\N}$, define
$\bar{\Lambda}_{kn}$ as in \eqref{eq:DefLambda} and
\begin{align*}
C_{k} & :=T\times S\times\bigcup_{\substack{|\nu|\le k+L\\
\gamma\le p
}
}\overline{B_{r_{k+L+p}}\left(\partial_{x}^{\nu}\partial_{t}^{\gamma}i_{0}(T\times S)\right)}\\
M_{k} & :=\max_{(t,x,z)\in C_{k}}\max_{|\nu|\le k}\left|\partial_{x}^{\nu}G(t,x,z)\right|
\end{align*}
Finally, assume that the following conditions are fulfilled:
\begin{enumerate}
\item \label{enu:radConst}$\max(a,b)\le\inf_{k\in\N}\frac{r_{k}}{M_{k}}$;
\item \label{enu:.condizione (W)Repl}$\sum_{n=0}^{+\infty}\bar{\Lambda}_{kn}\cdot\left\Vert P\left(i_{0}\right)-i_{0}\right\Vert _{k+nL}<+\infty$
for all $k\in\N$.
\end{enumerate}
Then, there exists a smooth solution $y\in\bar{B}_{R}(i_{0})\cap\mC^{\infty}(T\times S,\R^{m})$
of problem \eqref{eq:PDEred_y}.

\end{cor}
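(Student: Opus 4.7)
\medskip
\noindent\textbf{Proof plan.} The plan is to derive the corollary directly from the ``in particular'' concluding part of Theorem \ref{thm:PLpde}, applied with the \emph{constant} majorants $M_{k}$. Since the Weissinger hypothesis \ref{enu:.condizione (W)Repl} here coincides with hypothesis \ref{enu:W} of that theorem, and the Lipschitz assumption on $G$ is shared, the only real work is to verify the two quantitative estimates \ref{enu:GM_i} and \ref{enu:rad} of Theorem \ref{thm:PLpde}. No new fixed-point argument, no new differentiation lemma is needed; everything is already packaged in Theorem \ref{thm:PLpde} and Lemma \ref{lem:normsInt}.

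To verify \ref{enu:GM_i}, I would take an arbitrary $u\in\bar{B}_{R}(i_{0})$ and exploit that, by definition of $\bar{B}_{R}(i_{0})$, one has $\Vert u-i_{0}\Vert_{k+L+p}\le r_{k+L+p}$. This forces, for every multi-index with $|\alpha|\le k+L$ and $\gamma\le p$, the estimate $|\partial_{x}^{\alpha}\partial_{t}^{\gamma}(u-i_{0})(t,x)|\le r_{k+L+p}$, so that $\partial_{x}^{\alpha}\partial_{t}^{\gamma}u(t,x)$ lies in the closed ball $\overline{B_{r_{k+L+p}}(\partial_{x}^{\alpha}\partial_{t}^{\gamma}i_{0}(t,x))}$, which by construction sits inside $C_{k}$. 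Expanding $\partial_{x}^{\nu}G(t,x,u)$ for $|\nu|\le k$ through Faà di Bruno's formula—exactly as in \eqref{eq:partG^h} in the proof of Theorem \ref{thm:Lipschitz for PDE}—presents it as a finite sum of products of $\partial^{\eta}F$ evaluated at points of $C_{k}$ and polynomials $Q_{\eta\nu}$ in mixed derivatives of $u$ of $x$-order at most $k+L$ and $t$-order at most $p$. Every such factor is uniformly bounded in $u\in\bar{B}_{R}(i_{0})$ and $(t,x)\in T\times S$, and absorbing these bounds into the single maximum defining $M_{k}$ yields $|\partial_{x}^{\nu}G(t,x,u)|\le M_{k}$, which is exactly \ref{enu:GM_i}.

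The verification of \ref{enu:rad} is then straightforward: since $M_{k}$ is a constant, direct $d$-fold integration (equivalently, Lemma \ref{lem:normsInt}.\ref{enu:normsIntAlpha} applied with the constant function $M_{k}$) gives $\bar{M}_{k,j}(t_{0}+\max(a,b),x)\le \max(a,b)\cdot M_{k}$ for every $0<j\le d$ and $x\in S$, and hypothesis \ref{enu:radConst} upgrades this to $\max(a,b)\cdot M_{k}\le r_{k}$. Both \ref{enu:GM_i} and \ref{enu:rad} are thus in place, and Theorem \ref{thm:PLpde} delivers the desired smooth solution $y\in\bar{B}_{R}(i_{0})\cap\mC^{\infty}(T\times S,\R^{m})$ of \eqref{eq:PDEred_y}.

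I expect the main (purely notational) obstacle to be the bookkeeping in the Faà di Bruno step: one has to be careful that all derivatives of $u$ entering the $Q_{\eta\nu}$ factors have $x$-order $\le k+L$ and $t$-order $\le p$, so that they are controlled by the radii defining $C_{k}$, and that the points at which $\partial^{\eta}F$ is evaluated actually lie in $C_{k}$. This is essentially a repackaging of the analogous argument already performed in the proof of Theorem \ref{thm:Lipschitz for PDE}, so it should be routine rather than delicate.
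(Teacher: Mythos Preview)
Your proposal is correct and follows essentially the same approach as the paper: verify conditions \ref{enu:GM_i} and \ref{enu:rad} of Theorem~\ref{thm:PLpde} with the constant majorants $M_{k}$, then invoke that theorem. The paper is slightly more concise in the first step---once the derivatives $\partial_{x}^{\nu}\partial_{t}^{\gamma}u(t,x)$ are shown to lie in $C_{k}$, it appeals directly to the definition of $M_{k}$ and Remark~\ref{rem:PLT} (which already packages the Fa\`a di Bruno expansion into the meaning of $\partial_{x}^{\nu}G(t,x,z)$)---whereas you spell out the Fa\`a di Bruno bookkeeping explicitly; but this is the same argument.
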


\begin{proof}
Note explicitly that $C_{k}\Subset\R^{1+s+m}$ because $r_{k+L+p}<+\infty$.
As we proved in Thm.~\ref{thm:Lipschitz for PDE}, for all $u\in\bar{B}_{R}(i_{0})$,
all $|\nu|\le k+L$ and all $\gamma\le p$, we have
\[
\left|\partial_{x}^{\nu}\partial_{t}^{\gamma}u(t,x)-\partial_{x}^{\nu}\partial_{t}^{\gamma}i_{0}(t,x)\right|\le\|u-i_{0}\|_{|\nu|+\gamma}\le\|u-i_{0}\|_{k+L+p}\le r_{k+L+p},
\]
and hence $\partial_{x}^{\nu}\partial_{t}^{\gamma}u(t,x)\in C_{k}$.
Thereby, condition Thm.~\ref{thm:PLpde}.\ref{enu:GM_i} holds for
the chosen constant $M_{k}$ (see also Rem.~\ref{rem:PLT} just below).
Therefore, $\bar{M}_{kj}(t,x)=\frac{(t-t_{0})^{j}}{j!}M_{k}$ and
$\max_{\substack{x\in S\\
0<j\le d
}
}\bar{M}_{k}(t_{0}+\max(a,b),x)=\max(a,b)\cdot M_{k}\le r_{k}$ for all $k\in\N$ by \ref{enu:radConst}. We can finally apply Thm\@.~\ref{thm:PLpde}.
\end{proof}
\begin{rem}
\label{rem:PLT}To avoid misunderstandings, we explicitly note that
the simplified notation $\partial_{x}^{\nu}G(t,x,z)$ denotes the
function obtained by the following process:
\begin{enumerate}
\item Consider and arbitrary $u\in\bar{B}_{R}(i_{0})$, and the derivative
$\partial_{x}^{\nu}\left(G(t,x,u)\right)(t,x)$ given by \eqref{eq:partG^h};
\item In the formula \eqref{eq:partG^h} obtained after the computation
of this derivative, sub\-sti\-tute the variables $z_{\alpha\gamma}:=\left(\partial_{x}^{\alpha}\partial_{t}^{\gamma}u\right)_{\substack{|\alpha|\leq L\\
\gamma\le p
}
}$ and $z_{\mu\gamma}:=\left(\partial_{x}^{\mu}\partial_{t}^{\gamma}u(t,x)\right)_{\mu\gamma}$
to obtain $\partial_{x}^{\nu}G(t,x,z)$, where the variable $z$ represents
all the $z_{\alpha\gamma}$ and $z_{\mu\gamma}$.
\end{enumerate}
For example, for the PDE $\partial_{t}^{d}y=a(t)\cdot\partial_{x}^{L}\partial_{t}^{\gamma}y$,
we calculate the derivatives as $\partial_{x}^{\nu}G(t,x,u)=a(t)\cdot\partial_{x}^{\nu+L}\partial_{t}^{\gamma}u(t,x)$,
and here substituting $z$ for $\partial_{x}^{\nu+L}\partial_{t}^{\gamma}u(t,x)$
we get $\partial_{x}^{\nu}G(t,x,z)=a(t)\cdot z$. For the PDE $\partial_{t}^{d}y=\left(\partial_{x}^{L}\partial_{t}^{\gamma}y\right)^{2}$,
we have e.g.~$\partial_{x}G(t,x,u)=2\partial_{x}^{L}\partial_{t}^{\gamma}u(t,x)\partial_{x}^{L+1}\partial_{t}^{\gamma}u(t,x)$,
and $\partial_{x}G(t,x,z_{0},z_{1})=2z_{0}z_{1}$.
\end{rem}

On the contrary with respect to the Cauchy-Kowalevski theorem, in
the PL Thm.~\ref{thm:PLpde}, it would appear that we do not need
to assume $d\ge L$. However, this clearly cannot hold in general,
and in Sec.~\ref{sec:Examples} we show that such type of assumption
is implicitly contained in the convergence request of Weissinger condition
Thm.~\ref{thm:PLpde}.\ref{enu:W}. A first partial confirmation
going in this direction, can be glimpsed by computing the iteration
$P^{n}(i_{0})(t,x)$ in case of simple linear PDE, and then taking
$n\to+\infty$:
\begin{example}
\label{exa:Aleksandr}Assuming all the needed hypotheses to apply
the PL Thm.~\ref{thm:PLpde} for each one of the following cases
(where $a\in\R_{\ne0}$), we have:
\begin{enumerate}
\item \label{enu:heat}If $\partial_{t}y=a\cdot\partial_{x}^{2}y$, then
$y(t,x)=\sum_{n=0}^{+\infty}\frac{\partial_{x}^{2n}y_{00}(x)}{n!}a^{n}\cdot(t-t_{0})^{n}$;
\item \label{enu:waveLap}If $\partial_{t}^{2}y=a\cdot\partial_{x}^{2}y$,
then $y(t,x)=\sum_{n=0}^{+\infty}\frac{\partial_{x}^{2n}y_{00}(x)}{n!}a^{n}(t-t_{0})^{n}+\sum_{n=0}^{+\infty}\frac{\partial_{x}^{2n}y_{01}(x)}{(n+1)!}a^{n}(t-t_{0})^{n+1}$;
\item If $\partial_{t}y=a\cdot\partial_{x}y$, then $y(t,x)=\sum_{n=0}^{+\infty}\frac{\partial_{x}^{n}y_{00}(x)}{n!}a^{n}\cdot(t-t_{0})^{n}$;
\item \label{enu:p<d}If $\partial_{t}^{2}y=a\cdot\partial_{t}\partial_{x}y$,
then $y(t,x)=y_{0}(x)+\sum_{n=0}^{+\infty}\frac{\partial_{x}^{n}y_{01}(x)}{(n+1)!}a^{n}(t-t_{0})^{n+1}$;
\item If $\partial_{t}^{2}y=a\cdot\partial_{x}y$, then $y(t,x)=\sum_{n=0}^{+\infty}\frac{\partial_{x}^{n}y_{00}(x)}{(2n)!}a^{n}(t-t_{0})^{2n}+\sum_{n=0}^{+\infty}\frac{\partial_{x}^{n}y_{01}(x)}{(2n+1)!}a^{n}(t-t_{0})^{2n+1}$.
\end{enumerate}
\end{example}

The ideas of the proof of Thm.~\ref{thm:PLpde} are a simple generalization
of the classical proof for ODE, only adapted to contractions with
LOD and a countable family of norms. Indeed, for $L=0$ and $\Vert-\Vert_{k}=\Vert-\Vert_{0}$
the proof reduces to the classical proof for ODE and assumptions \ref{enu:GM_i},
\ref{enu:rad}, \ref{enu:.condizione (W)Repl} reduce to the usual
ones for the PLT for ODE with Weissinger condition, see e.g.~\cite{Tes13}.
On the other hand, the compact set $S\Subset\R^{s}$ (with $\mathring{S}$
dense in S) is \emph{completely arbitrary}: we can hence say that
our deduction proves that, \emph{with respect to the PLT}, PDE can
be simply treated as ODE depending on a parameter $x\in S$.

If the Lipschitz factors $\Lambda_{k}\in\R$ and the upper bounds
$M_{k}\in\R$ are constant (the proof of Thm\@.~\ref{thm:Lipschitz for PDE}
and Cor.~\ref{cor:MiConst} show that this is not a loss of generality),
then
\begin{align*}
\bar{M}_{kj}(t,x) & =M_{k}\frac{|t-t_{0}|^{j}}{j!},\\
\Lambda_{kn}^{j}(t,x) & =\frac{|t-t_{0}|^{nd+j}}{(nd+j)!}\prod_{j=0}^{n-1}\Lambda_{k+jL},\\
\bar{\Lambda}_{kn} & =\frac{\max(a,b)^{nd}}{(nd)!}\prod_{j=0}^{n-1}\Lambda_{k+jL}.
\end{align*}
Thereby, Weissinger condition Thm.~\ref{thm:PLpde}.\ref{enu:W}
becomes
\begin{equation}
\sum_{n=0}^{+\infty}\frac{\max(a,b)^{nd}}{(nd)!}\Vert P(i_{0})-i_{0}\Vert_{k+nL}\prod_{j=0}^{n-1}\Lambda_{k+jL}<+\infty\qquad\forall k\in\N.\label{eq:WConst}
\end{equation}
For ODE, we have $L=0$ and $\|-\|_{k}=\|-\|_{0}$, and \eqref{eq:WConst}
reduces to
\[
\|P(i_{0})-i_{0}\|_{0}\sum_{n=0}^{+\infty}\Lambda_{0}^{n}\frac{\max(a,b)^{nd}}{(nd)!}<+\infty,
\]
which always holds.
\begin{rem}
\ 
\begin{enumerate}
\item We believe it is worth mentioning that in a non-Archimedean setting
such as that of generalized smooth functions theory and Robinson-Colombeau
ring $\rti$, see e.g.~\cite{GiKuVe15,GiKuVe21,LuGi22,GiLu22}, we
can repeat the proof of the PLT with exactly the same formal steps
(but with the ring $\rti$ instead of the field $\R$). In addition,
we can take $S=[-\iota,\iota]^{s}\supseteq\R^{s}$, where $\iota$
is an infinite number (this kind of sets behave as compact sets for
generalized smooth functions, see \cite{GiKu18}). In this way, we
get a global solution in $x\in S$ (but clearly, we need initial conditions
on $S$, or equivalently boundary conditions that hold for all $x\in\R$).
Moreover, in the setting of the non-Archimedean ring $\rti$, the
generalized number (equivalence class in the quotient ring $\rti$)
$\diff\rho:=[\rho_{\eps}]\in\rti$ is an infinitesimal number since
$\rho_{\eps}\to0^{+}$ as $\eps\to0^{+}$ (and hence $\diff\rho^{-Q}=\left[\rho_{\eps}^{-Q}\right]$
is an infinite number for all $Q\in\N_{>0}$). Since in every Cauchy
complete non-Archimedean ring a series converges if and only if the
general term tends to zero (see e.g.~\cite{Kob96}), condition \eqref{eq:WConst}
is equivalent to $\lim_{n\to+\infty}\frac{\max(a,b)^{nd}}{(nd)!}\Vert P(i_{0})-i_{0}\Vert_{k+nL}\prod_{j=0}^{n-1}\Lambda_{k+jL}=0$.
Assuming that for some $Q\in\N_{\ge0}$ and some $q\in\N_{>0}$, we
have
\begin{align}
\frac{\max(a,b)^{nd}}{(nd)!}\Vert P(i_{0})-i_{0}\Vert_{k+nL}\prod_{j=0}^{n-1}\Lambda_{k+jL} & \le\diff\rho^{-Q},\label{eq:infinite}\\
\max(a,b) & \le\diff\rho^{q},\nonumber 
\end{align}
then $\frac{\max(a,b)^{nd}}{(nd)!}\Vert P(i_{0})-i_{0}\Vert_{k+nL}\prod_{j=0}^{n-1}\Lambda_{k+jL}\le\diff\rho^{-Q}\cdot\frac{\text{{\rm d}}\rho{}^{qnd}}{(nd)!}\to0$
as $n\to+\infty$. Thereby, since for ordinary smooth functions the
left hand side of \eqref{eq:infinite} is finite, we have that \emph{any
ordinary smooth normal Cauchy problem (even Lewy-Mizohata examples)
always has a solution in an infinitesimal interval} (see \cite{GiLu22}
for greater details).
\item Lewy-Mizohata examples imply that Weissinger condition in these cases
does not hold. Moreover, since the non-existence of a solution does
not depend on the initial condition $i_{0}$, \cite{Lew57,Miz62},
and taking $a=-1$, $b=1$, we necessarily must have
\begin{equation}
\exists k\in\N:\ \sum_{n=0}^{+\infty}\frac{1}{n!}\prod_{j=0}^{n-1}\Lambda_{k+j}=+\infty\label{eq:WLM}
\end{equation}
for all Lipschitz factors $(\Lambda_{j})_{j\in\N}$ (that always exist
because of Thm.~\ref{thm:Lipschitz for PDE}) (note that $d=1=L$,
$m=2$ for both counter-examples). Condition \eqref{eq:WLM} strongly
recall the non-analytic nature of $F$ in these cases.
\end{enumerate}
\end{rem}

\section{\label{sec:Examples}Examples}

The main aim of this section is to show at least one example of normal
PDE \eqref{eq:PDE} where either the right hand side $F$ or one of
the initial conditions $y_{0j}$ are not analytic functions.

The class of examples we are going to consider is
\begin{equation}
\partial_{t}^{d}y(t,x)=p(t)\cdot\partial_{x}^{\mu}\partial_{t}^{\gamma}y(t,x)+q(t,x),\label{eq:ex}
\end{equation}
where $y(t,x)\in\R^{m}$, $\mu\in\N^{s}$, $|\mu|=L>0$, $0\le\gamma<d$,
$p\in\Coo(T,\R^{m\times m})$ is an arbitrary matrix valued smooth
function, and $q$ is a smooth function with uniformly bounded derivatives
in $x$:
\begin{align}
\exists Q & \in\R_{>0}\,\forall\nu\in\N^{s}\,\forall(t,x)\in T\times S:\ |\partial_{x}^{\nu}q(t,x)|\le Q.\label{eq:qQ}
\end{align}
Clearly, wave, heat and Laplace equations are particular cases of
\eqref{eq:ex}. Explicitly note that also Mizohata's counterexample
\cite{Miz62} $\partial_{t}y^{1}=t\partial_{x}y^{2}+q^{1}(t,x)$,
$\partial_{t}y^{2}=-t\partial_{x}y^{1}+q^{2}(t,x)$ is exactly of
the form \eqref{eq:ex}, but $q=(q^{1},q^{2})$ is not analytic, so
it does not satisfy condition \eqref{eq:qQ}. On the other hand, in
Lewy's counterexample the coefficient $p=p(t,x_{1},x_{2})$ also depends
on $x$ and the term $q$ is again not analytic.

We want to directly apply the first part of the PL Thm.~\ref{thm:PLpde},
i.e.~to check properties \ref{enu:iterationsPLT} and \ref{enu:W}.
We start focusing on the latter, and arriving at estimates that are
independent from the radii $r_{k}$.

\subsection*{Estimate of Lipschitz constants $\Lambda_{k}$}

For arbitrary radii $R=(r_{k})_{k\in\N}$ and all $u$, $v\in\bar{B}_{R}(i_{0})$
and all $\nu\in\N^{s}$, with $|\nu|\le k$, we have:
\[
\left|\partial_{x}^{\nu}G(t,x,u)-\partial_{x}^{\nu}G(t,x,v)\right|\le\|p\|_{0}\cdot\max_{|\alpha|\le k+L}\left|\partial_{x}^{\alpha}u(t,x)-\partial_{x}^{\alpha}v(t,x)\right|.
\]
We can hence consider $\Lambda_{k}:=\|p\|_{0}$, so that setting $\bar{T}:=\max(a,b)$,
we get
\begin{equation}
\bar{\Lambda}_{kn}=\frac{\bar{T}^{nd}}{(nd)!}\prod_{j=0}^{n-1}\Lambda_{k+jL}=\frac{\bar{T}^{nd}}{(nd)!}\|p\|_{0}^{n}.\label{eq:LambdaBar}
\end{equation}

\subsection*{Estimate of the terms $\|P(i_{0})-i_{0}\|_{k+nL}$}

We have
\begin{align*}
P(i_{0})(t,x)-i_{0}(t,x) & =\int_{t_{0}}^{t}\diff s_{d}\ptind^{d}\int_{t_{0}}^{s_{2}}\left[p(s_{1})\cdot\partial_{x}^{\mu}\partial_{t}^{\gamma}i_{0}(s_{1},x)+q(s_{1},x)\right]\diff s_{1}\\
\partial_{x}^{\mu}\partial_{t}^{\gamma}i_{0}(s_{1},x) & =\sum_{j=\gamma}^{d-1}\frac{\partial_{x}^{\mu}y_{0j}(x)}{j!}j(j-1)\cdot\ldots\cdot(j-\gamma+1)\cdot(t-t_{0})^{j-\gamma}\\
 & =\sum_{j=\gamma}^{d-1}\frac{\partial_{x}^{\mu}y_{0j}(x)}{(j-\gamma)!}(t-t_{0})^{j-\gamma}.
\end{align*}
For $\beta\in\N^{1s}$, $|\beta|\le k+nL$, $\beta_{x}:=(\beta_{2},\ldots,\beta_{s})$,
we thus have
\begin{align*}
\partial^{\beta}\left[P(i_{0})(t,x)-i_{0}(t,x)\right] & =\sum_{j=\gamma}^{d-1}\frac{\partial_{x}^{\mu+\beta_{x}}y_{0j}(x)}{(j-\gamma)!}\int_{t_{0}}^{t}\diff s_{d}\ptind^{d-\beta_{1}}\int_{t_{0}}^{s_{2}}p(s_{1})\cdot(s_{1}-t_{0})^{j-\gamma}\,\diff s_{1}+\\
 & \phantom{=}\int_{t_{0}}^{t}\diff s_{d}\ptind^{d-\beta_{1}}\int_{t_{0}}^{s_{2}}\partial_{x}^{\beta_{x}}q(s_{1},x)\,\diff s_{1}.
\end{align*}
Using assumption \eqref{eq:qQ}:
\begin{align*}
\left|\partial^{\beta}\left[P(i_{0})(t,x)-i_{0}(t,x)\right]\right| & \le\|p\|_{0}\sum_{j=\gamma}^{d-1}\frac{\left|\partial_{x}^{\mu+\beta_{x}}y_{0j}(x)\right|}{(j-\gamma)!}\frac{\bar{T}^{j-\gamma+d-\beta_{1}}}{(j-\gamma+d-\beta_{1})!}(j-\gamma)!+\\
 & \phantom{\le}+Q\frac{\bar{T}^{d-\beta_{1}}}{(d-\beta_{1})!}.
\end{align*}
Therefore, taking for simplicity $\bar{T}\le1$:
\begin{equation}
\|P(i_{0})-i_{0}\|_{k+nL}\le\|p\|_{0}\sum_{j=\gamma}^{d-1}\frac{\|y_{0j}\|_{k+(n+1)L}}{(j-\gamma+d)!}+Q.\label{eq:normP}
\end{equation}

\subsection*{Weissinger condition}

Based on \eqref{eq:LambdaBar} and \eqref{eq:normP}, we can estimate
Weissinger condition as
\begin{multline*}
\sum_{n=0}^{+\infty}\bar{\Lambda}_{kn}\cdot\|P(i_{0})-i_{0}\|_{k+nL}\le\\
\sum_{n=0}^{+\infty}\frac{\bar{T}^{nd}}{(nd)!}\|p\|_{0}^{n+1}\sum_{j=\gamma}^{d-1}\frac{\|y_{0j}\|_{k+(n+1)L}}{(j-\gamma+d)!}+\sum_{n=0}^{+\infty}\frac{\bar{T}^{nd}}{(nd)!}\|p\|_{0}^{n}Q\\
=:S_{1}+S_{2}.
\end{multline*}
Since the latter series $S_{2}$ is convergent, we focus on the first
one:
\begin{equation}
S_{1}=\|p\|_{0}\sum_{n=0}^{+\infty}\left(\bar{T}^{d}\|p\|_{0}\right)^{n}\sum_{j=\gamma}^{d-1}\frac{\|y_{0j}\|_{k+(n+1)L}}{(nd)!}\cdot\frac{1}{(j-\gamma+d)!}.\label{eq:S_1}
\end{equation}
In this series, the only potentially problematic terms are the fractions
$\frac{\|y_{0j}\|_{k+(n+1)L}}{(nd)!}$ as $n\to+\infty$ for $j=\gamma,\ldots,d-1$,
because the remaining part surely yields a convergent series for $\bar{T}$
sufficiently small. This also yields that all the possible initial
conditions $y_{0j}$ for $j=0,\ldots,\gamma-1$ can be freely chosen
(see also Example \ref{exa:Aleksandr}.\ref{enu:p<d}).

\subsection*{Estimate of radii $r_{k}$}

If $f=f(t)$ is a function of $t$, for simplicity we first set
\[
I_{d}\left[f(t)\right]:=\int_{t_{0}}^{t}\diff s_{d}\ptind^{d}\int_{t_{0}}^{s_{2}}f(s_{1})\,\diff s_{1}.
\]
For $j\ge\gamma$ and $h\in\N$, we set
\begin{align*}
\mu_{j-\gamma,0}(t) & :=p(t)\cdot(t-t_{0})^{j-\gamma}\\
\mu_{j-\gamma,h+1}(t) & :=I_{d}\left[p(t)\cdot\partial_{t}^{\gamma}\mu_{j-\gamma,h}(t)\right]\\
\eta_{0}(t,x) & :=q(t,x)\\
\eta_{h+1}(t,x) & :=I_{d}\left[p(t)\cdot\partial_{x}^{\mu}\partial_{t}^{\gamma}\eta_{h}(t,x)\right].
\end{align*}
By induction on $n\in\N$, we can then prove that
\begin{equation}
P^{n}(i_{0})(t,x)=\sum_{j=0}^{d-1}\frac{y_{0j}(x)}{j!}(t-t_{0})^{j}+\sum_{h=1}^{n}\sum_{j=\gamma}^{d-1}\frac{\partial_{x}^{h\mu}y_{0j}(x)}{(j-\gamma)!}\mu_{j-\gamma,h}(t)+\sum_{h=1}^{n}\eta_{h}(t,x).\label{eq:P^n}
\end{equation}
Thereby, we get in this way a possible definition of the radii $r_{k}$
as
\[
\|P^{n}(i_{0})-i_{0}\|_{k}\le\sum_{h=1}^{+\infty}\sum_{j=\gamma}^{d-1}\frac{\|y_{0j}\|_{k+hL}}{(j-\gamma)!}\|\mu_{j-\gamma,h}\|_{k}+\sum_{h=0}^{+\infty}\|\eta_{h}\|_{k}=:r_{k}\quad\forall k\in\N.
\]
Using assumption \eqref{eq:qQ}, the latter series converges because
\[
\|\eta_{h}\|_{k}\le\|p\|_{0}^{h}\cdot Q\cdot\frac{\bar{T}^{(h+1)(d-\gamma)}}{\left[(h+1)(d-\gamma)\right]!}.
\]
In the former series, for $h\ge1$ we have instead
\[
\|\mu_{j-\gamma,h}\|_{k}\le\|p\|_{0}^{h}\cdot\frac{\bar{T}^{h(d-\gamma)+j}}{\left[(d-\gamma)!\right]^{h}},
\]
so that
\[
\sum_{h=1}^{+\infty}\sum_{j=\gamma}^{d-1}\frac{\|y_{0j}\|_{k+hL}}{(j-\gamma)!}\|\mu_{j-\gamma,h}\|_{k}\le\sum_{h=1}^{+\infty}\left(\frac{\|p\|_{0}\bar{T}^{d-\gamma}}{(d-\gamma)!}\right)^{h}\cdot\sum_{j=\gamma}^{d-1}\frac{\|y_{0j}\|_{k+hL}\bar{T}^{j}}{(j-\gamma)!}.
\]
If this series converges (and this mainly depends on the growing of
$\|y_{0j}\|_{k+hL}$), we can hence have $r_{k}<+\infty$, otherwise
we simply take $r_{k}=+\infty$.

\subsubsection*{A case of exponentially growing initial conditions}

If all the functions $y_{0j}$, $j=\gamma,\ldots,d-1$, satisfy for
some some $C_{j}\in\R_{>0}$ 
\begin{equation}
\|y_{0j}\|_{k+(n+1)L}\le C_{j}^{k+(n+1)L}\quad\forall j=\gamma,\ldots,d-1\,\forall k\in\N,\label{eq:bounded}
\end{equation}
then the series \eqref{eq:S_1} converges and we have
\begin{thm}
\label{thm:exp}If the initial conditions $y_{0j}$ satisfy \eqref{eq:bounded},
whereas $y_{0j}$ for $j=0,\ldots,\gamma-1$ are arbitrary smooth
functions, then there exists a smooth solution of \eqref{eq:ex} in
$\bar{B}_{R}(i_{0})$ for $\bar{T}$ sufficiently small and all $x\in S$.
In this case, we do not have constraints on $d$, $L$.
\end{thm}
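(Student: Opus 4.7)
The plan is to apply the PL Thm.~\ref{thm:PLpde} to \eqref{eq:ex} using the estimates already prepared in the preceding subsections: the constant Lipschitz factor $\Lambda_k = \|p\|_0$ and the iterated constants $\bar{\Lambda}_{kn} = \bar{T}^{nd}\|p\|_0^n/(nd)!$ from \eqref{eq:LambdaBar} are in place, together with the bound \eqref{eq:normP} on $\|P(i_0) - i_0\|_{k+nL}$ and the explicit form \eqref{eq:P^n} of the iterates. Under the exponential assumption \eqref{eq:bounded} only two things remain to check: the Weissinger condition Thm.~\ref{thm:PLpde}.\ref{enu:W} and the invariance condition Thm.~\ref{thm:PLpde}.\ref{enu:iterationsPLT}.

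For Weissinger, the decomposition $S_1 + S_2$ obtained just above reduces the matter to $S_1<+\infty$ (the series $S_2$ converges unconditionally). Setting $D := \max_{\gamma\le j<d} C_j^L$ and substituting $\|y_{0j}\|_{k+(n+1)L} \le C_j^{k+(n+1)L}$ into \eqref{eq:S_1} one gets
\[
S_1 \le \|p\|_0 \left(\max_j C_j^{k+L}\right) \left(\sum_{j=\gamma}^{d-1} \frac{1}{(j-\gamma+d)!}\right) \sum_{n=0}^{+\infty} \frac{(\bar{T}^d\|p\|_0 D)^n}{(nd)!},
\]
and the rightmost series converges for \emph{every} $\bar{T}>0$, since by the ratio test its general term satisfies $a_{n+1}/a_n = \bar{T}^d\|p\|_0 D / \prod_{i=1}^{d}(nd+i) \to 0$. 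Hence \ref{enu:W} holds for any choice of $\bar{T}$.

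For the radii I would \emph{not} invoke the auxiliary items Thm.~\ref{thm:PLpde}.\ref{enu:GM_i}--\ref{enu:rad}, because Faà di Bruno forces $M_k$ to grow with $k$ and the ratio $r_k/M_k$ need not stay bounded; instead I use the explicit representation \eqref{eq:P^n} of $P^n(i_0)$ to define
\[
r_k := \sum_{h=1}^{+\infty}\sum_{j=\gamma}^{d-1}\frac{\|y_{0j}\|_{k+hL}}{(j-\gamma)!}\|\mu_{j-\gamma,h}\|_k + \sum_{h=0}^{+\infty}\|\eta_h\|_k,
\]
so that by construction $\|P^n(i_0)-i_0\|_k\le r_k$ for every $n$. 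Combining the estimates $\|\mu_{j-\gamma,h}\|_k \le \|p\|_0^h\bar{T}^{h(d-\gamma)+j}/[(d-\gamma)!]^h$ and the bound on $\|\eta_h\|_k$ derived from \eqref{eq:qQ} with \eqref{eq:bounded}, the first sum is dominated up to a finite $k$-dependent factor by the geometric series $\sum_{h\ge 1}(\|p\|_0\bar{T}^{d-\gamma}D/(d-\gamma)!)^h$, which converges precisely when $\bar{T}$ is chosen small enough that $\|p\|_0\bar{T}^{d-\gamma}D < (d-\gamma)!$; the $\eta_h$ series converges for all $\bar{T}$. Thus $r_k<+\infty$ for all $k$, and Thm.~\ref{thm:PLpde}.\ref{enu:iterationsPLT} is satisfied. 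All hypotheses of Thm.~\ref{thm:PLpde} are then in force and we obtain a smooth solution $y\in\bar{B}_R(i_0)\cap\mC^{\infty}(T\times S,\R^m)$ of \eqref{eq:ex}.

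The main subtle point, and the only place where the requirement ``$\bar{T}$ sufficiently small'' enters, is exactly the verification of \ref{enu:iterationsPLT}: one must arrange that the same single choice of $\bar{T}$ works uniformly in $k$, which is possible because the threshold $\|p\|_0\bar{T}^{d-\gamma}D < (d-\gamma)!$ is itself $k$-independent. This is also the reason why no constraint of the type $d\ge L$ is needed: the Lipschitz factor $\Lambda_k=\|p\|_0$ is constant in $k$ (so the product $\prod_j\Lambda_{k+jL}=\|p\|_0^n$ does not grow), and the exponential bound \eqref{eq:bounded} only yields a geometric factor $D^n$ in the Weissinger series, both of which are defeated by the factorial $(nd)!$ coming from the nested integration, independently of the relative sizes of $d$ and $L$.
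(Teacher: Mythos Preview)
Your proposal is correct and follows exactly the route the paper takes: verify Weissinger through the $S_{1}$ estimate \eqref{eq:S_1} and handle condition \ref{enu:iterationsPLT} via the explicit iterate formula \eqref{eq:P^n} together with the bounds on $\|\mu_{j-\gamma,h}\|_{k}$ and $\|\eta_{h}\|_{k}$. The paper is in fact terser than you are---it essentially just asserts that $S_{1}$ converges under \eqref{eq:bounded} and leaves the radii discussion at the general level of the preceding subsection---so you have filled in the details rather than diverged from the argument; in particular your identification of the radii series (not Weissinger) as the sole source of the smallness requirement on $\bar{T}$, and your explanation of why no $d\ge L$ constraint arises, are both correct and make explicit what the paper only hints at.
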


\subsubsection*{The case of analytic initial conditions}

If all the $y_{0j}$, $j=\gamma,\ldots,d-1$, are analytic functions,
then for some $C_{j}\in\R_{>0}$, we have
\[
\|y_{0j}\|_{k+(n+1)L}\le C_{j}^{k+(n+1)L}\cdot\left(k+(n+1)L\right)!\quad\forall j=\gamma,\ldots,d-1\,\forall k\in\N.
\]
Using Stirling's approximation, we have $\left(k+(n+1)L\right)!\sim(nL)^{k+L}(nL)!$
and hence the following
\begin{thm}
\label{thm:exAnalytic}If $d\ge L$, then there exists a smooth solution
of \eqref{eq:ex} in $\bar{B}_{R}(i_{0})$ with analytic initial conditions
$y_{0j}$ if $j=\gamma,\ldots,d-1$ and \emph{arbitrary smooth} $y_{0j}$
if $j=0,\ldots,\gamma-1$, for $\bar{T}$ sufficiently small and all
$x\in S$.
\end{thm}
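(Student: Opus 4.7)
The plan is to apply the Picard--Lindelöf Thm.~\ref{thm:PLpde} to \eqref{eq:ex} taking $r_k=+\infty$ for every $k\in\N$: then $\bar{B}_R(i_0)$ equals the whole Fréchet space $\mC_t^p\mC_x^\infty(T\times S,\R^m)$ and condition \ref{enu:iterationsPLT} becomes trivial. The Lipschitz calculation already performed in this section gives the constant factors $\Lambda_k=\|p\|_0$ (valid for any radii since the right-hand side of \eqref{eq:ex} is affine in the highest derivatives), hence $\bar{\Lambda}_{kn}=\frac{\bar{T}^{nd}}{(nd)!}\|p\|_0^n$, while \eqref{eq:normP} controls $\|P(i_0)-i_0\|_{k+nL}$. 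Everything therefore reduces to verifying Weissinger's condition \ref{enu:W}, and since the series $S_2$ in the splitting of \eqref{eq:S_1} converges unconditionally, the real task is to show convergence of $S_1$.

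First I would insert the analyticity bound: on the compact set $S$, every analytic $y_{0j}$ (for $j=\gamma,\ldots,d-1$) admits a constant $C_j>0$ with $\|y_{0j}\|_{k+(n+1)L}\le C_j^{k+(n+1)L}(k+(n+1)L)!$. Using the Stirling asymptotic $(k+(n+1)L)!\sim(nL)^{k+L}(nL)!$ already recalled in the excerpt, the $n$-th term of $S_1$ is bounded, up to constants depending only on $k$ and $j$, by
\[
A_{n,k,j}:=C_j^{nL}\,(nL)^{k+L}\,\bigl(\bar{T}^d\|p\|_0\bigr)^n\cdot\frac{(nL)!}{(nd)!}.
\]
Next I would exploit $d\ge L$ via a two-case analysis. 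If $d>L$, then $(nL)!/(nd)!\le(nd)^{-n(d-L)}$ decays super-exponentially and dominates every other factor, so $\sum_n A_{n,k,j}<+\infty$ for every $\bar{T}>0$. If $d=L$, then $(nL)!/(nd)!=1$ and $A_{n,k,j}=(nL)^{k+L}\bigl(\bar{T}^L\|p\|_0 C_j^L\bigr)^n$, which by the root test converges precisely when $\bar{T}^L\|p\|_0C_j^L<1$, i.e., for $\bar{T}$ sufficiently small.

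Combining across the finitely many indices $j\in\{\gamma,\ldots,d-1\}$ I would obtain convergence of $S_1$, hence of the full Weissinger series, for $\bar{T}$ sufficiently small (and for arbitrary $\bar{T}$ when $d>L$). Since the $y_{0j}$ with $j<\gamma$ do not enter either $\|P(i_0)-i_0\|_{k+nL}$ or $\bar{\Lambda}_{kn}$, they are free to be arbitrary smooth functions, exactly as the statement allows. Invoking Thm.~\ref{thm:PLpde} then produces the desired smooth solution $y\in\bar{B}_R(i_0)\cap\mC^\infty(T\times S,\R^m)$.

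The hard part will be the borderline case $d=L$: there the factorial gain $(nL)!/(nd)!$ degenerates to $1$ and the polynomial blow-up $(nL)^{k+L}$ must be absorbed by the geometric factor $\bigl(\bar{T}^L\|p\|_0 C_j^L\bigr)^n$. This forces the use of the root test rather than a naïve geometric comparison and is precisely the reason the statement requires ``$\bar{T}$ sufficiently small''; by contrast the strictly hyperbolic regime $d>L$ is essentially automatic, since the super-exponential decay of $(nL)!/(nd)!$ immediately swamps every other factor and no smallness of $\bar{T}$ is needed at the level of the series analysis.
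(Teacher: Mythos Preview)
Your proposal is correct and follows the paper's approach: verify Weissinger's condition \ref{enu:W} by inserting the analytic bound and the Stirling asymptotic $(k+(n+1)L)!\sim(nL)^{k+L}(nL)!$ into $S_1$, with your choice $r_k=+\infty$ (trivializing condition \ref{enu:iterationsPLT}) being a legitimate simplification that the paper itself explicitly allows. One harmless slip: the inequality $(nL)!/(nd)!\le(nd)^{-n(d-L)}$ should read $\le(nL+1)^{-n(d-L)}$ (you wrote the lower bound of the ratio, not the upper), but the super-exponential decay and hence the conclusion are unaffected.
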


\noindent Note explicitly that already this theorem yields more general
results with respect to the classical Cauchy-Kowalevski theorem because
both the matrix coefficient $p(t)$ in \eqref{eq:ex} and the initial
conditions $y_{0j}$ for $j=0,\ldots,\gamma-1$ can be arbitrary smooth
functions.

\subsubsection*{A case of non-analytic initial conditions}

Now, let us assume that our initial conditions which are not analytic
satisfy
\begin{equation}
\|y_{0j}\|_{k+(n+1)L}\sim(nL)^{\sigma_{j}nL},\sigma_{j}>0\quad\forall j=\gamma,\ldots,d-1\,\forall k\in\N.\label{eq:nonAnalytic}
\end{equation}
Note that $\lim_{n\to+\infty}\frac{C^{n}n^{A}n!}{n^{\sigma_{j}n}}=0$,
so that each function $y_{0j}$ satisfying \eqref{eq:nonAnalytic}
cannot be an analytic function. We have
\[
\frac{\|y_{0j}\|_{k+(n+1)L}}{(nd)!}\sim\frac{e^{nd}}{\sqrt{2\pi nd}}\left(\frac{L}{d}\right)^{\sigma_{j}nL}\frac{1}{(nd)^{n(d-\sigma_{j}L)}}.
\]
We therefore have the following
\begin{thm}
\label{thm:exNonAnalytic}If the initial conditions $y_{0j}$, $j=0,\ldots,\gamma-1$,
are arbitrary smooth functions, whereas $y_{0j}$ for $j=\gamma,\ldots,d-1$
are analytic or they satisfy \eqref{eq:nonAnalytic}, and if in the
latter case we have $d>\sigma_{j}L$, then there exists a smooth solution
of \eqref{eq:ex} in $\bar{B}_{R}(i_{0})$ for $\bar{T}$ sufficiently
small and all $x\in S$.
\end{thm}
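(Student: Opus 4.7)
The plan is to apply Thm.~\ref{thm:PLpde} to the PDE \eqref{eq:ex} directly, using all the preparatory computations already performed in this section: the Lipschitz constant $\Lambda_k = \|p\|_0$ yields $\bar{\Lambda}_{kn} = \frac{\bar{T}^{nd}}{(nd)!}\|p\|_0^n$ as in \eqref{eq:LambdaBar}, and the estimate \eqref{eq:normP} controls $\|P(i_0)-i_0\|_{k+nL}$. These bound Weissinger's sum Thm.~\ref{thm:PLpde}.\ref{enu:W} by $S_1 + S_2$, with $S_2$ trivially convergent and $S_1$ as in \eqref{eq:S_1}. Since $y_{0j}$ for $j<\gamma$ does not appear in $S_1$ at all, these may indeed be chosen arbitrarily smooth; it therefore suffices to verify the convergence of $S_1$ for every $k\in\N$.

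For this, I would split the finite inner sum $\sum_{j=\gamma}^{d-1}$ in $S_1$ according to the type of growth of $y_{0j}$. For those indices $j$ where $y_{0j}$ is analytic, I would invoke the Stirling-based estimate already used in Thm.~\ref{thm:exAnalytic} to dispatch the corresponding partial sum for $\bar{T}$ sufficiently small. For those $j$ where $y_{0j}$ only satisfies \eqref{eq:nonAnalytic}, I would apply the asymptotic
\[
\frac{\|y_{0j}\|_{k+(n+1)L}}{(nd)!} \sim \frac{e^{nd}}{\sqrt{2\pi nd}} \left(\frac{L}{d}\right)^{\sigma_j nL} \frac{1}{(nd)^{n(d-\sigma_j L)}}
\]
already displayed just above the theorem. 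Under the hypothesis $d > \sigma_j L$, the super-exponential decay $(nd)^{-n(d-\sigma_j L)}$ dominates $e^{nd}$, so the general term of $S_1$ is bounded by a summable sequence uniformly in $k$ once $\bar{T}$ is taken small enough to absorb $(\bar{T}^d\|p\|_0)^n$ and the finitely many analytic contributions.

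For the iteration condition Thm.~\ref{thm:PLpde}.\ref{enu:iterationsPLT}, I would define the radii $r_k \in \R_{>0} \cup \{+\infty\}$ exactly as in the paragraph on estimates of radii preceding the theorem, allowing $r_k = +\infty$ whenever the corresponding series diverges. The closed form \eqref{eq:P^n} for $P^n(i_0)$, together with the pointwise majorization by the norms of $\mu_{j-\gamma,h}$ and $\eta_h$, then guarantees $\|P^n(i_0) - i_0\|_k \le r_k$ for every $n\in\N$, verifying the hypothesis (note that Thm.~\ref{thm:PLpde} explicitly permits $r_k = +\infty$, so finiteness of $r_k$ is not needed). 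With both hypotheses of Thm.~\ref{thm:PLpde} in place, the existence of a smooth solution in $\bar{B}_R(i_0)$ follows.

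The principal delicacy is to pin down the sharpness of the threshold $d > \sigma_j L$: the Stirling asymptotic must be applied carefully so that the polynomial prefactor $\frac{e^{nd}}{\sqrt{2\pi nd}}$ and the constant $(L/d)^{\sigma_j nL}$ do not spoil the super-exponential decay coming from $(nd)^{-n(d-\sigma_j L)}$. Once this uniform-in-$k$ decay is secured, the remaining steps are of geometric-series or factorial-comparison type, and the theorem follows.
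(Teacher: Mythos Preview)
Your proposal is correct and follows essentially the same route as the paper: the paper does not give a separate formal proof of this theorem, but rather presents it as an immediate consequence of the displayed Stirling asymptotic $\frac{\|y_{0j}\|_{k+(n+1)L}}{(nd)!}\sim\frac{e^{nd}}{\sqrt{2\pi nd}}\left(\frac{L}{d}\right)^{\sigma_{j}nL}\frac{1}{(nd)^{n(d-\sigma_{j}L)}}$ combined with the preparatory estimates \eqref{eq:LambdaBar}, \eqref{eq:normP}, \eqref{eq:S_1}, and \eqref{eq:P^n} developed earlier in the section. Your write-up is in fact more explicit than the paper's, particularly in handling the mixed analytic/non-analytic case via a split of the inner sum and in spelling out how the radii $r_k$ (possibly $+\infty$) verify condition~\ref{enu:iterationsPLT}.
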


\noindent Taking $n\to+\infty$ in \eqref{eq:P^n}, we also obtain
the following generalization of Example \ref{exa:Aleksandr}:
\begin{cor}
\noindent \label{cor:formula}In the assumptions of each one of Thm.~\ref{thm:exp},
\ref{thm:exAnalytic}, \ref{thm:exNonAnalytic}, the solution $y$
of Picard-Lindelöf iterations is given by the formula:
\begin{equation}
y(t,x)=\sum_{j=0}^{\gamma-1}\frac{y_{0j}(x)}{j!}(t-t_{0})^{j}+\sum_{h=0}^{+\infty}\sum_{j=\gamma}^{d-1}\frac{\partial_{x}^{h\mu}y_{0j}(x)}{(j-\gamma)!}\mu_{j-\gamma,h}(t)+\sum_{h=0}^{+\infty}\eta_{h}(t,x).\label{eq:formula}
\end{equation}
In particular, if the functions $p$ and $q$ are constant, then
\begin{align*}
y(t,x) & =\sum_{j=0}^{\gamma-1}\frac{y_{0j}(x)}{j!}(t-t_{0})^{j}+\sum_{h=0}^{+\infty}\sum_{j=\gamma}^{d-1}\frac{\partial_{x}^{h\mu}y_{0j}(x)}{(j-\gamma)!}p^{h}\frac{(t-t_{0})^{h(d-\gamma)+j}}{\left[(d-\gamma)!\right]^{h}}+\\
 & \phantom{=}+q\sum_{h=0}^{+\infty}p^{h}\frac{(t-t_{0})^{(h+1)(d-\gamma)}}{\left[(h+1)(d-\gamma)\right]!}.
\end{align*}
Let $y(t,x;\eps)$ be the solution defined by \eqref{eq:formula}
corresponding to initial conditions $y_{0j}(x;\eps)$, where $\eps\in(-1,1)$.
If we can exchange $\lim_{\eps\to0}$ and $\sum_{h=0}^{+\infty}$,
e.g.~if the sequence of derivatives $\left(\partial_{x}^{h\mu}y_{0j}(x;\eps)\right)_{h\in\N}$
pointwise converges in a dominated way as $h\to+\infty$, i.e.~for
all $h\in\N$, $x\in S$, $j=0,\ldots,d-1$, and $\eps\in(-1,1)$
we have
\begin{gather*}
\exists\,\lim_{\eps\to0}\partial_{x}^{h\mu}y_{0j}(x;\eps)=\partial_{x}^{h\mu}y_{0j}(x;0)\\
|\partial_{x}^{h\mu}y_{0j}(x;\eps)|\le g_{h}(x;\eps)\\
\sum_{h=0}^{+\infty}g_{h}(x;\eps)<+\infty,
\end{gather*}
then $\lim_{\eps\to0}y(t,x;\eps)=y(t,x;0)$.
\end{cor}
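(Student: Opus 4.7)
My plan is to obtain formula \eqref{eq:formula} by passing to the pointwise limit $n \to \infty$ in the inductive representation \eqref{eq:P^n} established just above the statement. Under the hypotheses of any of Theorems \ref{thm:exp}, \ref{thm:exAnalytic}, or \ref{thm:exNonAnalytic}, the PL Theorem \ref{thm:PLpde} yields $y = \lim_{n \to \infty} P^{n}(i_0)$ in the Fréchet topology of $\mC_t^p \mC_x^\infty(T \times S, \R^m)$, hence in particular pointwise on $T \times S$. At each fixed $(t,x)$ the polynomial $i_0$-part of \eqref{eq:P^n} is independent of $n$, so both partial sums $\sum_{h=1}^{n}$ in \eqref{eq:P^n} must converge as $n \to \infty$, and the sum of their limits equals $y(t,x) - i_0(t,x)$; this produces \eqref{eq:formula} after the elementary re-indexing needed to match the statement (splitting off the $j < \gamma$ contributions of $i_0$ into the polynomial prefix and shifting the $h$-indexing appropriately).

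For the particular case of constant $p$ and $q$, I would then run a direct induction on $h$, repeatedly using the elementary identity $I_d[(s-t_0)^k] = \frac{k!}{(k+d)!}(t-t_0)^{k+d}$ to evaluate the iterated integrals defining $\mu_{j-\gamma,h}(t)$ and $\eta_h(t,x)$. This produces the closed-form expressions appearing in the second displayed equation of the corollary; substituting them back into \eqref{eq:formula} yields the explicit series representation of $y$.

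For the continuous-dependence statement, my plan is to apply a Tannery / dominated-convergence argument to each of the infinite sums in \eqref{eq:formula} with respect to the counting measure on $\N$. The sum $\sum_h \eta_h(t,x)$ is independent of $\eps$ since $q$ in the example class \eqref{eq:ex} does not depend on $\eps$, and the finite polynomial prefix $\sum_{j=0}^{\gamma-1} \frac{y_{0j}(x;\eps)}{j!}(t-t_0)^j$ passes to the limit trivially. For the middle double series, the $h$-th summand is bounded in absolute value by $\sum_{j=\gamma}^{d-1} \frac{|\mu_{j-\gamma,h}(t)|}{(j-\gamma)!}\, g_h(x;\eps)$; combined with uniform boundedness of the fixed factors $\mu_{j-\gamma,h}(t)$ over the compact $t$-interval and the summability $\sum_h g_h(x;\eps) < +\infty$, this supplies the required dominant. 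The pointwise convergence $\partial_x^{h\mu} y_{0j}(x;\eps) \to \partial_x^{h\mu} y_{0j}(x;0)$ then justifies exchanging $\lim_{\eps \to 0}$ with $\sum_h$. I do not anticipate any essential obstacle; the mildly delicate point is that the hypothesis supplies an $\eps$-dependent dominant $g_h(x;\eps)$ rather than an $\eps$-uniform one, so to apply dominated convergence rigorously one should either assume continuity of $g_h$ in $\eps$ (to extract a uniform bound on a small neighborhood of $0$) or invoke Tannery's theorem in its parametric version — in either case a routine refinement.
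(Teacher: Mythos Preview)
Your approach is correct and coincides with the paper's, which offers no proof beyond the single remark ``Taking $n\to+\infty$ in \eqref{eq:P^n}'' preceding the statement. Your treatment is in fact considerably more detailed than the paper's: you spell out the induction for the constant-coefficient case and the Tannery/dominated-convergence argument for the $\eps$-dependence, neither of which the paper addresses, and you correctly flag the mild technical wrinkle that the dominant $g_h(x;\eps)$ is allowed to depend on~$\eps$.
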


Note that our estimates above of the Weissinger condition and the
radii $r_{k}$, allow us also to state that conditions \eqref{eq:WCor}
and \eqref{eq:iterationsSolEq} of Cor.~\ref{cor:solEqLoD} hold.
Moreover, the proof of PL Thm.~\ref{thm:PLpde} shows that also \eqref{eq:losCor}
holds. Therefore, to solve \eqref{eq:ex} in the space $X=\bar{B}_{R}(i_{0})$
we can also apply Cor.~\ref{cor:solEqLoD}, as we stated above in
Sec.~\ref{sec:equationsLoD}.

We close this section by noting that for the PDE
\begin{equation}
\partial_{t}^{d}y(t,x)=y(t,x)\cdot\partial_{x}^{\mu}y(t,x)\label{eq:genBurges}
\end{equation}
with $|\mu|=L$, we can use ideas similar to those of Thm.~\ref{thm:Lipschitz for PDE}
to show that setting
\begin{align*}
\bar{C}_{k+L} & :=\bigcup_{|\alpha|\le k+L}\partial_{x}^{\alpha}i_{0}(T\times S)\Subset\R^{m}\\
\|i_{0}(T\times S)\|_{k+L} & :=d(\bar{C}_{k+L},0)\\
\Lambda_{k} & :=2^{k}\left(r_{k+L}+\|i_{0}(T\times S)\|_{k+L}\right),
\end{align*}
then \eqref{eq:genBurges} has $(\Lambda_{k})_{k\in\N}$ as Lipschitz
constants with $L$ loss of derivatives. Thereby, $\bar{\Lambda}_{kn}=\frac{T^{nd}}{(nd)!}2^{n}\prod_{j=0}^{n-1}\left(r_{k+jL}+\|i_{0}(T\times S)\|_{k+jL}\right)$.
However, in the case $L=1$ and $y_{0j}$ satisfying \eqref{eq:nonAnalytic}
with $\sigma_{j}=1$, we get $\bar{\Lambda}_{kn}=\frac{T^{nd}}{(nd)!}2^{n}H(n-1)$,
where $H(n-1)=\prod_{j=0}^{n-1}j^{j}$ is the hyperfactorial function.
Since $H(n)=O(n^{n^{2}/2})$, Weissinger condition never holds. This
clearly left open the possibility of better estimates of different
Lipschitz factors.

\section{\label{sec:aboutLOD}Some remarks about the loss of derivatives condition}

As we discussed in the previous sections, Def.~\ref{def:contractionLoD}
of contraction with LOD is at the core of our version of the BFPT,
i.e.~Thm.~\ref{thm:BFPTLoss}. We start this section with a discussion
of this notion of contraction.
\begin{defn}
\label{def:L_P}We call \textit{minimal loss for $P$ from $y_{0}$
}the quantity
\[
L_{P}\left(y_{0}\right):=\min\left\{ L\in\mathbb{N}\mid P\in\LoD\left(X,L,y_{0}\right)\right\} .
\]
\end{defn}

\begin{lem}
\label{lem:fix point entails LoD 0}Let $\ensuremath{\left(\mathcal{F},\left(\Vert-\Vert_{k}\right)_{k\in\N}\right)}$
be a Fréchet space, $X$ be a closed subset of $\mathcal{F}$, $y_{0}\in X$
and $P:X\ra X$ be a continuous map. Assume that $\left\Vert -\right\Vert _{0}$
(hence, $\left\Vert -\right\Vert _{k}$ for every $k\in\N$) is a
norm. If there exists $N\in\N_{>0}$ such that $P^{N}\left(y_{0}\right)$
is a fixed point of $P$, then $P\in\LoD\left(X,0,y_{0}\right)$,
i.e.~\emph{$P$ is a contraction with }$0$ \emph{loss of derivatives
starting from }$y_{0}$.
\end{lem}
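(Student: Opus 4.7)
The plan is to verify each of the four conditions of Def.~\ref{def:contractionLoD} with $L=0$. Conditions \ref{enu:LoD-P-continuous} (continuity) and \ref{enu:LOD-iterates} ($P^{n}(y_{0})\in X$) are immediate from the assumption that $P:X\to X$ is continuous. The driving observation for the remaining two conditions is that, writing $\bar{y}:=P^{N}(y_{0})$, the fixed-point property $P(\bar{y})=\bar{y}$ propagates to $P^{n}(y_{0})=\bar{y}$ for every $n\geq N$, hence
\[
P^{n+1}(y_{0})-P^{n}(y_{0})=0\qquad\forall n\geq N.
\]
Thus only finitely many differences can be nonzero.

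To verify condition \ref{enu:LoD-contr}, I would split into two cases. If $P(y_{0})=y_{0}$, then all iterates coincide with $y_{0}$, every difference on the left-hand side vanishes, and any positive $\alpha_{kn}$ (e.g.\ $\alpha_{kn}:=2^{-n}$) works. Otherwise $P(y_{0})\neq y_{0}$; since $\|-\|_{0}$ is a norm and $\|-\|_{k}\geq\|-\|_{0}$, each $\|-\|_{k}$ is also a norm, so $\|P(y_{0})-y_{0}\|_{k}>0$ for every $k$. For $0\leq n<N$ I would set
\[
\alpha_{kn}:=\frac{\|P^{n+1}(y_{0})-P^{n}(y_{0})\|_{k}+2^{-n}}{\|P(y_{0})-y_{0}\|_{k}}>0,
\]
which makes the inequality $\|P^{n+1}(y_{0})-P^{n}(y_{0})\|_{k}\leq\alpha_{kn}\|P(y_{0})-y_{0}\|_{k}$ hold by construction; for $n\geq N$ set $\alpha_{kn}:=2^{-n}$, in which case the required inequality reads $0\leq 2^{-n}\|P(y_{0})-y_{0}\|_{k}$, trivially true.

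Condition \ref{enu:LoD-W} (Weissinger), with $L=0$, reduces to $\|P(y_{0})-y_{0}\|_{k}\cdot\sum_{n=0}^{\infty}\alpha_{kn}<+\infty$. With the choice above, $\sum_{n=0}^{\infty}\alpha_{kn}$ is the sum of the finite block $\sum_{n=0}^{N-1}\alpha_{kn}$ plus the convergent geometric tail $\sum_{n\geq N}2^{-n}$, and is therefore finite. The only genuinely delicate point is the role of the assumption that $\|-\|_{0}$ is a norm: it is exactly what guarantees $\|P(y_{0})-y_{0}\|_{k}\neq 0$ in the nontrivial case, so that the ratios defining $\alpha_{kn}$ are well defined and finite. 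Without positive-definiteness, one of the first $N$ differences could be nonzero while $\|P(y_{0})-y_{0}\|_{k}=0$, leaving no finite $\alpha_{kn}$ to satisfy \eqref{eq:contractionLoD}.
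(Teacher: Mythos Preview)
Your proof is correct and follows essentially the same approach as the paper's: split into the case $P(y_0)=y_0$ versus $P(y_0)\neq y_0$, use that the differences $P^{n+1}(y_0)-P^n(y_0)$ vanish for all $n\geq N$, and for the finitely many earlier indices define $\alpha_{kn}$ via the ratio of norms (the paper takes $N$ minimal so the bare ratio is already positive, whereas you add $2^{-n}$ to guarantee positivity without that reduction). Your treatment of the trivial case is in fact slightly more careful than the paper's, which sets $\alpha_{kn}:=0$ there, technically conflicting with the requirement $\alpha_{kn}\in\R_{>0}$ in Def.~\ref{def:contractionLoD}.
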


\begin{proof}
Let $N$ be the smallest number such that $P^{N}\left(y_{0}\right)$
is a fixed point of $P$.

If $N=1$, $P\left(y_{0}\right)=y_{0}$ hence $\left\Vert P^{n+1}\left(y_{0}\right)-P^{n}\left(y_{0}\right)\right\Vert _{k}=0$
for every $k$, $n\in\N$, so our claim follows just by setting each
$\alpha_{kn}:=0$.

If $N>1$, $\left\Vert P\left(y_{0}\right)-y_{0}\right\Vert _{0}\neq0$
since $\left\Vert -\right\Vert _{0}$ is a norm, therefore $\left\Vert P\left(y_{0}\right)-y_{0}\right\Vert _{k}\neq0$
for every $k\in\N$, as the norms $\left\Vert -\right\Vert _{k}$
are increasing. For every $k$, $n\in\N$ set 
\begin{equation}
\alpha_{kn}:=\begin{cases}
\frac{\left\Vert P^{n+1}\left(y_{0}\right)-P^{n}\left(y_{0}\right)\right\Vert _{k}}{\left\Vert P\left(y_{0}\right)-y_{0}\right\Vert _{k+nL}}, & \text{if}\,n<N;\\
\frac{1}{n^{2}\left\Vert P\left(y_{0}\right)-y_{0}\right\Vert _{k+nL}}, & \text{otherwise.}
\end{cases}\label{eq:alphaForLeq0}
\end{equation}

\noindent With this choice, Def.~\ref{def:contractionLoD}.\eqref{enu:LoD-contr}
and Def.~\ref{def:contractionLoD}.\eqref{enu:LoD-W} are easily
verified because $\alpha_{kn}\Vert P(y_{0})-y_{0}\Vert_{k+nL}=\frac{1}{n^{2}}\ge\Vert P^{n+1}(y_{0})-P^{n}(y_{0})\Vert_{k}=\Vert y_{0}-y_{0}\Vert_{k}$
if $n\ge N$, hence $P\in\LoD\left(X,0,y_{0}\right)$.
\end{proof}
The following is a rather surprising fact that holds, e.g., in $\mathcal{C}_{t}^{0}\mathcal{C}_{x}^{\infty}\left(T\times S,\R^{m}\right)$.
\begin{thm}
\label{thm:L=00003D0}Let $\ensuremath{\left(\mathcal{F},\left(\Vert-\Vert_{k}\right)_{k\in\N}\right)}$
be a Fréchet space, let $X$ be a closed subset of $\mathcal{F}$,
let $y_{0}\in X$ and let $P:X\ra X$. Assume that $\left\Vert -\right\Vert _{0}$
(hence, $\left\Vert -\right\Vert _{k}$ for every $k\in\N$) is a
norm. The following properties are equivalent:
\begin{enumerate}
\item \label{enu:eq_cond_LoD1}There exists $L\in\N$ such that $P\in\LoD\left(X,L,y_{0}\right)$;
\item \label{enu:eq_cond_LoD2}$P$ is continuous and for all $k\in\N$
\begin{equation}
\sum_{n=0}^{\infty}\left\Vert P^{n+1}\left(y_{0}\right)-P^{n}\left(y_{0}\right)\right\Vert _{k}<+\infty;\tag{W'}\label{eq:W'}
\end{equation}
\item \label{enu:eq_cond_LoD3}$P\in\LoD\left(X,0,y_{0}\right)$.
\end{enumerate}
\end{thm}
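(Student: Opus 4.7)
The plan is to establish the cycle $\ref{enu:eq_cond_LoD3}\Rightarrow\ref{enu:eq_cond_LoD1}\Rightarrow\ref{enu:eq_cond_LoD2}\Rightarrow\ref{enu:eq_cond_LoD3}$. The first implication is immediate since $L=0$ is an admissible value in condition \ref{enu:eq_cond_LoD1}. For $\ref{enu:eq_cond_LoD1}\Rightarrow\ref{enu:eq_cond_LoD2}$, continuity of $P$ is already built into Def.~\ref{def:contractionLoD}.\ref{enu:LoD-P-continuous}, while bounding each summand via \eqref{eq:contractionLoD} and then invoking \eqref{eq:W} yields
\[
\sum_{n=0}^{\infty}\left\Vert P^{n+1}(y_{0})-P^{n}(y_{0})\right\Vert _{k}\le\sum_{n=0}^{\infty}\alpha_{kn}\left\Vert P(y_{0})-y_{0}\right\Vert _{k+nL}<+\infty,
\]
so \eqref{eq:W'} holds.

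The substantive content lies in $\ref{enu:eq_cond_LoD2}\Rightarrow\ref{enu:eq_cond_LoD3}$. Continuity of $P$ is part of \ref{enu:eq_cond_LoD2}, and $P^{n}(y_{0})\in X$ is granted since $P:X\to X$ by hypothesis, so only conditions Def.~\ref{def:contractionLoD}.\ref{enu:LoD-contr} and \ref{enu:LoD-W} need to be verified with $L=0$. I split into two cases. If $P(y_{0})=y_{0}$, then all iterates coincide with $y_{0}$, every increment $\Vert P^{n+1}(y_{0})-P^{n}(y_{0})\Vert _{k}$ vanishes, $\Vert P(y_{0})-y_{0}\Vert _{k}=0$, and the choice $\alpha_{kn}:=2^{-n}$ trivially satisfies both \eqref{eq:contractionLoD} ($0\le 2^{-n}\cdot 0$) and \eqref{eq:W} (the series is identically zero). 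Otherwise $P(y_{0})\ne y_{0}$, and because $\Vert -\Vert _{0}$ is a norm and the family of seminorms is increasing, $\Vert P(y_{0})-y_{0}\Vert _{k}>0$ for every $k\in\N$; I then set
\[
\alpha_{kn}:=\frac{\Vert P^{n+1}(y_{0})-P^{n}(y_{0})\Vert _{k}}{\Vert P(y_{0})-y_{0}\Vert _{k}}+2^{-n}>0,
\]
so that Def.~\ref{def:contractionLoD}.\ref{enu:LoD-contr} with $L=0$ holds by construction, and the Weissinger sum
\[
\sum_{n=0}^{\infty}\alpha_{kn}\Vert P(y_{0})-y_{0}\Vert _{k}=\sum_{n=0}^{\infty}\Vert P^{n+1}(y_{0})-P^{n}(y_{0})\Vert _{k}+2\Vert P(y_{0})-y_{0}\Vert _{k}
\]
is finite thanks to hypothesis \eqref{eq:W'}.

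No serious obstacle is expected. The conceptual takeaway—and the reason this equivalence is striking—is that the freedom to choose $L\in\N$ in \ref{enu:eq_cond_LoD1} is an illusion: Weissinger ultimately only monitors the summability of the orbit's increments $\Vert P^{n+1}(y_{0})-P^{n}(y_{0})\Vert _{k}$, a quantity intrinsic to the sequence $(P^{n}(y_{0}))_{n\in\N}$ and independent of $L$, and the liberty to rescale the $\alpha_{kn}$ absorbs any apparent loss of derivatives. The hypothesis that $\Vert -\Vert _{0}$ is a norm plays the single but essential role of ruling out the pathology $\Vert P(y_{0})-y_{0}\Vert _{k}=0$ with $P(y_{0})\ne y_{0}$, which would make the natural ratio above ill-defined.
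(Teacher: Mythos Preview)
Your proof is correct and follows the same cycle of implications as the paper. Two minor points of comparison are worth noting. For $\ref{enu:eq_cond_LoD1}\Rightarrow\ref{enu:eq_cond_LoD2}$ you apply \eqref{eq:contractionLoD} and \eqref{eq:W} directly, whereas the paper first separates off the case in which some iterate is already a fixed point; your direct bound makes that case split unnecessary. For $\ref{enu:eq_cond_LoD2}\Rightarrow\ref{enu:eq_cond_LoD3}$ the paper again splits on whether some $P^{N}(y_{0})$ is a fixed point and in that case invokes the auxiliary Lemma~\ref{lem:fix point entails LoD 0}; your device of adding $2^{-n}$ to the natural ratio guarantees $\alpha_{kn}>0$ uniformly and keeps the argument self-contained, which is a small but genuine simplification.
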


\begin{proof}
\ref{enu:eq_cond_LoD1}$\Rightarrow$\ref{enu:eq_cond_LoD2}: If there
exist $k$, $N\in\N$ such that $\left\Vert P^{N+1}\left(y_{0}\right)-P^{N}\left(y_{0}\right)\right\Vert _{k}=0$,
then $P^{N}\left(y_{0}\right)$ is a fixed point of $P$ as $\left\Vert -\right\Vert _{k}$
is a norm. But then, for every $m\geq N$ and for every $k\in\N$
we have $\left\Vert P^{m+1}\left(y_{0}\right)-P^{m}\left(y_{0}\right)\right\Vert _{k}=0$,
and hence $\sum_{n=0}^{\infty}\left\Vert P^{n+1}\left(y_{0}\right)-P^{n}\left(y_{0}\right)\right\Vert _{k}=\sum_{n=0}^{N-1}\left\Vert P^{n+1}\left(y_{0}\right)-P^{n}\left(y_{0}\right)\right\Vert _{k}<+\infty$.
Otherwise, $||P(y_{0})-y_{0}||_{k+nL}\neq0$ for every $k$, $N\in\N$,
hence by Def.~\ref{def:contractionLoD}.\ref{enu:LoD-contr} we get
that $\alpha_{kn}\geq\frac{||P^{n+1}(y_{0})-P^{n}(y_{0})||_{k}}{||P(y_{0})-y_{0}||_{k+nL}}\in\R_{>0}$
which, substituted in Def.~\ref{def:contractionLoD}.\ref{enu:LoD-W},
gives \eqref{eq:W'}.

\ref{enu:eq_cond_LoD2}$\Rightarrow$\ref{enu:eq_cond_LoD3}: If there
exist $k$, $N\in\N$ such that $\left\Vert P^{N+1}\left(y_{0}\right)-P^{N}\left(y_{0}\right)\right\Vert _{k}=0$,
then $P^{N}\left(y_{0}\right)$ is a fixed point of $P$ as $\left\Vert -\right\Vert _{k}$
is a norm, and we conclude by Lemma \ref{lem:fix point entails LoD 0}.
Otherwise, in particular $\left\Vert P\left(y_{0}\right)-y_{0}\right\Vert _{k}\neq0$
for every $k\in\N$. For every $k$, $N\in\N$, we set $\alpha_{kn}:=\frac{\left\Vert P^{n+1}\left(y_{0}\right)-P^{n}\left(y_{0}\right)\right\Vert _{k}}{\left\Vert P\left(y_{0}\right)-y_{0}\right\Vert _{k+nL}}$.
Then Def.~\ref{def:contractionLoD}.\ref{enu:LoD-contr} holds trivially,
and Def.~\ref{def:contractionLoD}.\ref{enu:LoD-W} holds as, by
construction
\[
\sum_{n=0}^{\infty}\alpha_{kn}\left\Vert P\left(y_{0}\right)-y_{0}\right\Vert _{k}=\sum_{n=0}^{\infty}\left\Vert P^{n+1}\left(y_{0}\right)-P^{n}\left(y_{0}\right)\right\Vert _{k}<+\infty
\]
by assumption.

\ref{enu:eq_cond_LoD3}$\Rightarrow$\ref{enu:eq_cond_LoD1}: This
is trivial.
\end{proof}
In particular, this result shows that, if $\Vert-\Vert_{k}$ are norms,
$L_{P}\left(y_{0}\right)=0$ whenever $P\in\LoD\left(X,L,y_{0}\right)$
for some $L$. Note that, in general, this does not entail the uniqueness
of the fixed point of $P$, since such uniqueness would require a
much stronger condition on $P$ than Def.~\ref{def:contractionLoD}.\ref{enu:LoD-contr}
or condition \eqref{eq:W'}, see e.g.~Lem.~\ref{lem:classicalContr}.

We also note that condition \eqref{eq:W'} implies that $\left(P^{n}\left(y_{0}\right)\right)_{n\in\N}$
is a Cauchy sequence as we did in \eqref{eq:cauchyP}, and this, together
with the continuity of $P$, yields that $\overline{y}:=\lim_{n\ra+\infty}P^{n}\left(y_{0}\right)$
is a fixed point of $P$ by Lem.~\ref{lem:Banach_banale}.

On the other hand, the previous Thm.~\ref{thm:L=00003D0} \emph{does
not} imply that we can take $L=0$ in the PLT Thm.~\ref{thm:PLpde},
because the assumption that the right hand side $G$ of the PDE is
Lipschitz on $\bar{B}_{R}(y_{0})$ with loss of derivatives $L=0$
in general is \emph{not} satisfied. In other words: The natural loss
of derivatives $L>0$ corresponds to the maximum order of derivatives
in $x$ appearing in the PDE \eqref{eq:PDE}, and the natural Lipschitz
constants $\alpha_{kn}$ are derived in the proof of Thm.~\ref{thm:PLpde},
e.g.~using the Lipschitz factors $\left(\Lambda_{k}\right)_{k\in\N}$
for the right hand side of the PDE derived from Thm.~\ref{thm:Lipschitz for PDE}.
Using these natural constants, Weissinger condition \ref{enu:W} is
easier to estimate than condition \eqref{eq:W'} or the use of \eqref{eq:alphaForLeq0}.

\section{Conclusions}

Starting from the classical Kowalevski counter-example for the heat
equation or Hadamard's results on the Cauchy problem for the Laplace
equation, one can think that a PDE links in a given relation $\partial_{t}y$
and $\partial_{x}y$ and hence it necessarily forces the solution,
in general, in a space of functions whose derivatives growth in a
restricted way, these constraints being related to the PDE itself.
This implies that the initial conditions cannot be freely chosen but
must be taken into another constrained space. We could say that we
do not have to find a suitable space of generalized solutions for
our PDE, but conditions stating when it has a solution or not; only
at the philosophical level, this is similar to the point of view of
nonlinear differential Galois theory, see e.g.~\cite{Mal}, or the
formal theory of differential equations, see e.g.~\cite{Sei}.

The PLT we proved in this paper goes exactly in this direction, by
showing that the existence of a solution (by Picard-Lindelöf iterations)
depends on the initial conditions we start with: Def.~\ref{def:contractionLoD}
of contraction with loss of derivatives, the closure with respect
to iterations \ref{enu:LOD-iterates}, Weissinger condition \eqref{eq:W},
the definition of the radii \eqref{eq:P^n}, all go in this direction.
Examples considered in Sec.~\ref{sec:Examples} show a first link
between the syntax of the PDE (in the term $(nd)!$) and the order
of growth of the derivatives of the initial conditions $\|y_{0j}\|_{k+nL}$.
On the other hand, exactly as up to fourth order algebraic equations
are solvable in radicals, if the order $d=1$ the method of characteristics
allows one to solve a large class of PDE for \emph{any} initial condition.

It is now natural to ask for a generalization to more singular normal
(nonlinear) PDE, e.g.~where the right hand side $F$ or some of the
initial conditions $y_{0j}$ are some kind of generalized functions.
In order to get this generalization by following the ideas of the
present work, we would need a space of generalized functions which
is closed with respect to composition and with a complete topology
generated by norms; this space must clearly be non-trivial, e.g.~containing
all Sobolev-Schwartz distributions. In our opinion, this target can
be fully accomplished in a beautiful and simple setting by considering
the Grothendieck topos of non-Archimedean generalized smooth function,
see e.g.~\cite{GiKuVe21,LuGi22,GiLu22}. We plan to realize this
goal in future works.


\begin{thebibliography}{10}
\bibitem{AgFrOR03}Agarwal, R.P., Frigon, M., O'Regan, D., A survey
of recent fixed point theory in Fréchet spaces, Nonlinear analysis
and applications: to V. Lakshmikanthan on his 80th birthday, Vol.
1, 2, Kluwer Acad. Publishers (2003), 75-88.

\bibitem{BaGo80}Banas, J., Goebel, K., \emph{Measure of noncompactness
in Banach spaces}, M. Dekker, 1980.

\bibitem{CaNa71}Cain, G.L. Jr., Nashed, M.Z., Fixed points and stability
for a sum of two operators in locally convex spaces, Pacific J. Math.
39 (1971), 581--592.

\bibitem{Cla}Clarke, F.H., On the inverse function theorem, Pacific
Journal of Mathematics, Vol.~64, No.~1, 1976.

\bibitem{Den06}Dencker, N., The resolution of the Nirenberg-Treves
conjecture. Ann. of Math. (2) 163 (2006), 405-444.

\bibitem{Dud17}Dudek, S., Fixed point theorems in Fréchet algebras
and Fréchet spaces and applications to nonlinear integral equations,
Applicable Analysis and Discrete Mathematics, Vol. 11, No. 2 (2017),
340-357.

\bibitem{Eke11}Ekeland, I., An inverse function theorem in Fréchet
spaces, Ann. I. H. Poincaré -- AN 28 (2011) 91--105.

\bibitem{EkSe}Ekeland, I., Séré, É., A surjection theorem for maps
with singular perturbation and loss of derivatives. Journal of the
European Mathematical Society, Vol. 23, No. 10, 3323--3349.

\bibitem{Eva10}Evans, L.C., \emph{Partial Differential Equations},
2nd ed., AMS, 2010.

\bibitem{Fri00}Frigon, M., Fixed point results for generalized contractions
in gauge spaces and applications, Proc. American Math. Soc., Vol.
128, No. 10 (2000), 2957--2965.

\bibitem{GiKu18}Giordano, P., Kunzinger, M., A convenient notion
of compact sets for generalized functions. Proceedings of the Edinburgh
Mathematical Society, Volume 61, Issue 1, February 2018, pp. 57-92.

\bibitem{GiKuVe15}Giordano, P., Kunzinger, M., Vernaeve, H., Strongly
internal sets and generalized smooth functions. Journal of Mathematical
Analysis and Applications, volume 422, issue 1, 2015, pp. 56--71.

\bibitem{GiKuVe21}Giordano, P., Kunzinger, M., Vernaeve, H., A Grothendieck
topos of generalized functions I: Basic theory. See: arXiv 2101.04492.

\bibitem{GiLu22}Giordano, P., Luperi Baglini, L., A Grothendieck
topos of generalized functions III: Normal PDE, preprint. See: \url{https://www.mat.univie.ac.at/~giordap7/ToposIII.pdf}.

\bibitem{Ham82}Hamilton, R., The inverse function theorem of Nash
and Moser, Bull. Amer. Math. Soc. (1) 7 (1982) 65--222.

\bibitem{How97}Howard, R., The inverse function theorem for Lipschitz
maps. Lecture Notes, 1997. See \url{https://people.math.sc.edu/howard/Notes/inverse.pdf}

\bibitem{Kob96}Koblitz, N., \emph{p-adic Numbers, p-adic Analysis,
and Zeta-Functions}, Graduate Texts in Mathematics (Book 58), Springer;
2nd edition, 1996.

\bibitem{Koh05}Kohn, J.J., Hypoellipticity and loss of derivatives,
Annals of Math. 162(2005), 943-- 986.

\bibitem{Koh13}Kohn, J.J., (2013) Loss of Derivatives. In: Farkas
H., Gunning R., Knopp M., Taylor B. (eds) From Fourier Analysis and
Number Theory to Radon Transforms and Geometry. Developments in Mathematics,
vol 28. Springer, New York, NY.

\bibitem{Lea}Leach, E.B., A Note on Inverse Function Theorems, Proceedings
of the American Mathematical Society, Vol. 12, No. 5 (Oct., 1961),
pp. 694-697.

\bibitem{Lew57}Lewy, H., An Example of a Smooth Linear Partial Differential
Equation Without Solution, Annals of Mathematics, Second Series, Vol.
66, No. 1 (Jul., 1957), pp. 155-158.

\bibitem{LuGi22}Luperi Baglini, L., Giordano, P., A Grothendieck
topos of generalized functions II: ODE, preprint. See: \url{https://www.mat.univie.ac.at/~giordap7/ToposII.pdf}.

\bibitem{Mal}Malgrange, B., On nonlinear differential Galois theory.
Dedicated to the memory of Jacques-Louis Lions. Chinese Ann. Math.
Ser. B 23, no. 2, 219--226 (2002).

\bibitem{Miz62}Mizohata, S., Solutions nulles et solutions non analytiques,
J.~Math.~Kyoto Univ. 1-2 (1962), pp. 271-302.

\bibitem{Mos61}Moser, J., A new technique for the construction of
solutions of nonlinear differential equations, Proc. Nat. Acad. Sci.
USA 47 (1961), 1824-1831.

\bibitem{PaPa05}Parenti, C., Parmeggiani, A., On the hypoellipticity
with a big loss of derivatives, Kyushu J. Math. 59 (2005), 155--230.

\bibitem{SeTrWa}Seifert, C., Trostorff, S., Waurick, M., \emph{Evolutionary
Equations. Picard’s Theorem for Partial Differential Equations, and
Applications}. Birkhäuser 2022.

\bibitem{Sei}Seiler, W.M., \emph{Involution. The Formal Theory of
Differential Equations and its Applications in Computer Algebra}.
Springer-Verlag Berlin Heidelberg 2010,

\bibitem{Tar74}Tarafdar, E., An approach to fixed-point theorems
on uniform spaces, Trans. Amer. Math. Soc. 191 (1974), 209--225.

\bibitem{Tay72}Taylor, W.W., Fixed points theorems for nonexpansive
mappings in linear topological spaces, J. Math. Anal. Appl. 40 (1972),
164--173.

\bibitem{Tes13}Teschl, G., \emph{Ordinary Differential Equations
and Dynamical Systems}, AMS, 2013.

\bibitem{Tre}Treves, F., \emph{Topological Vector Spaces, Distributions
and Kernels}, Pure and Applied Mathematics, Vol. 25, Academic Press
2016.

\bibitem{WaZh15}Wang, F., Zhou, H., Fixed point theorems and the
Krein-\u{S}mulian property in locally convex spaces, Fixed Point Theory
and Applications (2015) 2015:154.

\bibitem{WaZhWe19}Wang, F., Zhou, H., Weng, S., Existence of weak
solutions for an infinite system of second order differential equations.
Adv. Oper. Theory 4 (2019), No. 2, 514-528.

\bibitem{Wei52}Weissinger, J., Zur Theorie und Anwendung des Iterationsverfahrens,
Math. Nachr., 8 (1952), pp. 193-212.
\end{thebibliography}
\end{document}